\numberwithin{equation}{section}
\numberwithin{figure}{section}
  \theoremstyle{plain}
  \newtheorem*{thm*}{\protect\theoremname}
  \theoremstyle{plain}
  \newtheorem*{prop*}{\protect\propositionname}
\theoremstyle{plain}
\newtheorem{thm}{\protect\theoremname}
  \theoremstyle{definition}
  \newtheorem{defn}[thm]{\protect\definitionname}
  \theoremstyle{plain}
  \newtheorem{prop}[thm]{\protect\propositionname}
  \theoremstyle{definition}
  \newtheorem{example}[thm]{\protect\examplename}
  \theoremstyle{remark}
  \newtheorem{rem}[thm]{\protect\remarkname}
  \theoremstyle{plain}
  \newtheorem{lem}[thm]{\protect\lemmaname}
  \theoremstyle{plain}
  \newtheorem{cor}[thm]{\protect\corollaryname}
  \providecommand{\corollaryname}{Corollary}
  \providecommand{\definitionname}{Definition}
  \providecommand{\examplename}{Example}
  \providecommand{\lemmaname}{Lemma}
  \providecommand{\propositionname}{Proposition}
  \providecommand{\remarkname}{Remark}
  \providecommand{\theoremname}{Theorem}
\begin{document}

\title[Phase Connectedness of Area Minimizing Partitionings]{On the Phase Connectedness of the Volume-Constrained Area Minimizing
Partitioning Problem}

\author{A. C. Faliagas}

\address{Department of Mathematics, University of Athens, Panepistemiopolis,
15784 Athens, Greece}

\email{afaliaga@math.uoa.gr}
\begin{abstract}
We study the stability of partitions in convex domains involving simultaneous
coexistence of three phases, viz. triple junctions. We present a careful
derivation of the formula for the second variation of area, written
in a suitable form with particular attention to boundary and spine
terms, and prove, in contrast to the two phase case, the existence
of stable partitions involving a disconnected phase.
\end{abstract}

\maketitle

\section{\label{sec:Intro}Introduction}

The phase partitioning problem involves the splitting of a domain
$\Omega\subset\mathbb{R}^{n}$ into a prescribed number of subsets,
the \emph{phases}, with the measure of each phase fixed, and minimality
of their perimeter in the interior of $\Omega$. Investigation of
interfaces and related phenomena started in the 19th century when
Plateau\cite{key-2} observed that soap films and bubble clusters
consisted of (a) smooth surfaces, (b) curves (liquid lines) along
which triples of surfaces met at equal angles, and (c) isolated points
where four such triple junctions met at equal angles. Early studies
of the mathematical problem of partitioning include Nitsche's paper\cite{key-15,key-16},
and Almgren's Memoir\cite{key-17}. White\cite{key-18} proved existence
and discussed regularity of equilibrium immiscible fluid configurations
using Flemming's flat chains\cite{key-106}. Taylor\cite{key-3,key-4}
characterized the minimal cones in $\mathbb{R}^{3}$. The regularity
of the liquid line was established by Kinderlehrer, Nirenberg and
Spruck\cite{key-19}.

The structure of the singular set of hypersurfaces and their clusters
was studied by using mean curvature flow methods\cite{key-107}. A
hypersurface evolves by mean curvature flow when the velocity is given
by the mean curvature vector. Volume preserving mean curvature flows
were used for the investigation of the dynamics of phase partitioning
problems with a volume constraint\cite{key-108,key-109}. These methods
apply to two-phase problems. For the three-phase partitions with prescribed
boundaries and triple junction topologies, the required constraints
render the formulation and the handling of the problem prohibitively
complex. Note that a pure mean curvature motion with nontrivial velocity
is not possible near the line where the three surfaces of a triple
junction intersect (the spine), see Section \ref{sec:App-TJ-R3},
text preceding equation (\ref{eq:A-38}). We found the direct variational
methods used in this work, which allow tangential variations besides
the usual normal ones, more convenient and suitable for the investigation
of the stability of multiphase problems involving multijunctions.

The problem of the phase connectedness was addressed by Sternberg
and Zumbrun (SZ)\cite{key-7}, for the particular case of two-phase
partitioning. They proved that \emph{stable two phase partitions in
strictly convex domains are necessarily connected}. In the present
paper we consider the three-phase partitioning of a domain (open and
connected subset) $\Omega\subset\mathbb{R}^{3}$ (or $\mathbb{R}^{2}$)
with boundary $\Sigma=\partial\Omega$, in which three phases coexist
by the formation of triple junctions. $\Sigma$ is assumed to be a
$C^{r}$-hypersurface of $\mathbb{R}^{3}$. Occasionally we present
definitions, formulas and propositions more generally in $\mathbb{R}^{n}$,
but our main results concern $\mathbb{R}^{3}$ and $\mathbb{R}^{2}$.

\begin{figure}[h]
\includegraphics[scale=0.75]{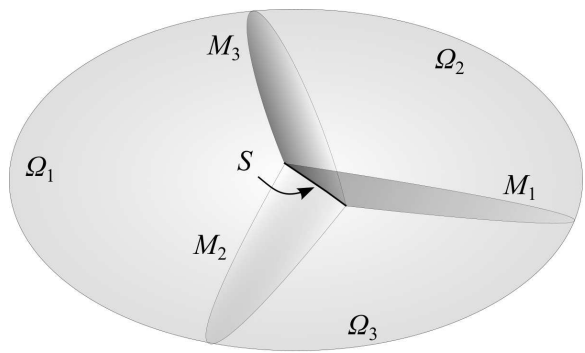}\caption{\label{fig:TJ}Single triple-junction partitioning. $\Omega_{i}$
is the space occupied by phase $i$ and $M_{i}$ is the interface
between phases $k$, $l$ with $k\protect\neq i$ and $l\protect\neq i$.
$S$ is the spine of the triple junction.}
\end{figure}

The problem can be mathematically formulated as follows. Let $\Omega\subset\mathbb{R}^{3}$
(or $\mathbb{R}^{2}$) be a domain with boundary $\Sigma=\partial\Omega$
as stated. In the volume constrained 3-phase partitioning problem
(refer to Figure \ref{fig:TJ}) we seek a division of $\Omega$ into
three subsets (the phases) $\Omega_{1}$, $\Omega_{2}$, $\Omega_{3}$,
each having prescribed volume $\left|\Omega_{i}\right|=V_{i}$, and
boundaries $M_{i}$ in $\Omega$ (the \emph{interfaces}), which form
a triple junction $T=(M_{i})_{i=1}^{3}$, such that the total interface
is a (local or global) minimizer of the area functional. The interfaces
$M_{i}$ are assumed to be $C^{2}$-hypersurfaces with boundary. In
a general setting, the interfaces $M_{1},\cdots,M_{m}$ form more
than one triple junction (see Figure \ref{fig:Intro-st-unst-TJs}).
The area, or more generally, the surface energy functional of the
partitioning is given by
\begin{equation}
A(T)=\sum_{i=1}^{m}\gamma_{i}A(M_{i})\label{eq:1-1}
\end{equation}
where $\gamma_{i}>0$ is the surface energy density (surface tension)
associated with $M_{i}$. In this notation, interfaces, phases and
subsets are identified by successive indexing. Other indexing schemes
are possible. The notation\cite{key-18} $M_{ij}$ for the interfaces,
where $i$, $j$ are phases in contact, was not found convenient for
our calculations. More convenient notations are introduced in the
sequel; refer to Example \ref{exa:primary} for a brief comparison
of indexing schemes. A \emph{minimal partitioning} is a critical point
of the functional (\ref{eq:1-1}),
\[
\delta A(T):=\left.\frac{d}{dt}A(T^{t})\right|_{t=0}=0,
\]
where $T^{t}$ is any admissible variation of the partition (for a
precise formulation of this, see Definition \ref{def:var}). The second
variation of the surface energy functional $A$ is defined by
\[
\delta^{2}A(T):=\left.\frac{d^{2}}{dt^{2}}A(T^{t})\right|_{t=0}
\]
A \emph{stable partition} is a minimal partition with $\delta^{2}A(T)>0$
for all nontrivial admissible variations. A partition is \emph{disconnected}
if at least one phase $\Omega_{i}$ is a disconnected subset (see
Figure \ref{fig:Intro-st-unst-TJs}).

Our main result is Theorem \ref{thm:stab-discon-tj} which establishes
the \emph{existence of stable partitions with a disconnected phase}
in convex domains of $\mathbb{R}^{2}$ by a configuration of two triple
junctions (see Figure \ref{fig:DTJ}):
\begin{thm*}[Existence of stable partitions with a disconnected phase in $\mathbb{R}^{2}$]
Let $\Omega$ be a convex domain in $\mathbb{R}^{2}$, and $T=(M_{1},\cdots,M_{5})$
a minimal disconnected three-phase partitioning of $\Omega$ by a
system of two $C^{2}$ triple junctions as in Figure \ref{fig:DTJ},
with volume constraints. Furthermore, for $\Omega$ and the partitioning
system $T$ we make the following assumptions:

\emph{(H1)} The boundary $\Sigma=\partial\Omega$ is $C^{2}$ in a
neighborhood of $\Sigma\cap\overline{T}$ and it is flat at $\overline{T}\cap\Sigma$.
In particular this means $\sigma=0$ at all points of $\overline{T}\cap\Sigma$.

\emph{(H2)} $M_{1}$ is flat, i.e. $\kappa_{1}=0$, and the length
of $M_{1}$ is $L$.

\emph{(H3) }All\emph{ }other leaves have the same curvature $\kappa\neq0$
and the same length $|M_{i}|=l$, $i=2,\cdots,5$.

\emph{(H4)} $\alpha<0$ in the orientation of Figure \ref{fig:DTJ}.

Then there is a $L_{0}>0$, possibly depending on $l$ and $\kappa$,
such that for $L\leqslant L_{0}$ the disconnected triple junction
partitioning $T$ is stable.
\end{thm*}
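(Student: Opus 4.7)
The plan is to apply the second variation formula derived earlier in the paper to the specific geometry of Figure \ref{fig:DTJ}, reduce the resulting quadratic form on the space of admissible variations to a controllable expression, and show it is positive definite for $L$ sufficiently small. I would parameterize an admissible variation by its scalar normal speeds $\phi_{i}$ on each $M_{i}$, subject to the three linearized volume constraints, the linearized angle conditions at each of the two spines, and the natural boundary conditions at the four endpoints on $\Sigma$. By \emph{(H1)} the second fundamental form of $\Sigma$ vanishes at the contact points, so the $\sigma$-dependent boundary terms drop out and
\[
\delta^{2}A(T)=\sum_{i=1}^{5}\gamma_{i}\int_{M_{i}}\bigl((\phi_{i}')^{2}-\kappa_{i}^{2}\phi_{i}^{2}\bigr)\,ds+\mathcal{B}(\phi),
\]
where $\mathcal{B}$ collects the two spine contributions, with coefficients depending explicitly on $\kappa$ and, through \emph{(H4)}, on the sign of $\alpha$.

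I would then exploit the reflection symmetry of $T$ across $M_{1}$, which is available by \emph{(H2)} and \emph{(H3)}, to diagonalize this quadratic form into even and odd blocks. Flatness of $M_{1}$ kills its Jacobi term, leaving the positive contribution $\gamma_{1}\int_{0}^{L}(\phi_{1}')^{2}\,ds$. In each block, the linearized constraints force the mean of $\phi_{1}$ on $M_{1}$ to vanish---by volume preservation of the disconnected phase in the even block, by antisymmetry in the odd block---so that Poincar\'e's inequality gives
\[
\int_{0}^{L}\phi_{1}^{2}\,ds\;\leq\;\frac{L^{2}}{\pi^{2}}\int_{0}^{L}(\phi_{1}')^{2}\,ds,
\]
and hence the $M_{1}$ contribution controls $\phi_{1}$ in $H^{1}(M_{1})$ and, via the linearized angle conditions, the traces of $\phi_{2},\dots,\phi_{5}$ and their derivatives at the spines.

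On each curved leaf the Jacobi operator $-\partial_{s}^{2}-\kappa^{2}$ has only finitely many nonpositive eigenmodes on an interval of length $l$ under the relevant boundary data, so the possibly destabilizing part of the quadratic form on $M_{2},\dots,M_{5}$ is finite dimensional and depends only on $\kappa l$. I would then show that $\mathcal{B}$, evaluated on this finite-dimensional subspace and restricted to the kernel of the linearized constraints, is strictly positive; this is precisely the role of \emph{(H4)}, since $\alpha<0$ produces the favorable sign of the dominant spine coefficient. The main obstacle is the coupled bookkeeping at the two spines: six angle conditions and two independent volume conditions simultaneously entangle all five $\phi_{i}$'s, while $\mathcal{B}$ mixes values and normal derivatives in a way that is not diagonal in the symmetry decomposition.

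I expect to handle this by writing the admissible subspace as the kernel of a bounded linear operator that depends smoothly on $L$, and computing a Schur-complement expression for $\delta^{2}A$ on this kernel. In the limit $L\to 0$, the Poincar\'e estimate rigidly enforces $\phi_{1}\equiv 0$ and the Schur complement reduces to an explicit finite-dimensional quadratic form that is positive definite by \emph{(H4)}. A continuity argument in $L$ then yields a threshold $L_{0}=L_{0}(l,\kappa)>0$ such that for $L\leq L_{0}$ the full second variation is positive on all nontrivial admissible variations, establishing the asserted stability.
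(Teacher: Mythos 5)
Your high-level strategy (perturb off the degenerate configuration $L=0$ and conclude by a continuity argument in $L$) is in the same spirit as the paper's, but two of your intermediate steps fail as stated and the step carrying the real content of the theorem is asserted rather than proved. The zero-mean claim for $\phi_{1}$ is false: in the configuration of Figure \ref{fig:DTJ}, $M_{1}$ separates phases 2 and 3 and does not bound the disconnected phase at all, so volume preservation of that phase reads $\int_{M_{2}}\phi_{2}+\int_{M_{4}}\phi_{4}-\int_{M_{3}}\phi_{3}-\int_{M_{5}}\phi_{5}=0$ and involves $\phi_{1}$ nowhere; the constraints that do involve $\phi_{1}$, namely (\ref{eq:vol-const}), equate $\int_{M_{1}}\phi_{1}$ to integrals over the arcs and do not force it to vanish in either symmetry block. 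Hence the Poincar\'e inequality on $M_{1}$ is unavailable, and with it your control of $\phi_{1}$ in $H^{1}(M_{1})$. Likewise, the admissible class imposes only the single scalar relation $f_{1}+f_{2}+f_{3}=0$ at each spine; it cannot deliver control of ``the traces of $\phi_{2},\dots,\phi_{5}$ and their derivatives,'' since the derivative relations (\ref{eq:eigv-sp1})--(\ref{eq:eigv-sp2}) are natural conditions satisfied by critical points, not constraints on variations.

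More seriously, the positivity of the limiting finite-dimensional form is exactly the nontrivial assertion of the theorem, and ``positive definite by \emph{(H4)}'' is not a proof. With $\beta=0$ and $\alpha=-\tfrac{\sqrt{3}}{2}\kappa<0$ the spine contribution $\alpha\left(f_{1j}^{2}-h_{1j}^{2}\right)$ is \emph{negative} in $f_{1j}$ and positive only in $h_{1j}$, while each arc contributes $-\kappa^{2}\int f_{i}^{2}$; whether the positive parts dominate is a quantitative question in which the geometric bound $\kappa l<\pi/6$ enters essentially, and your sketch never uses it. The paper settles precisely this point by a different and fully explicit route: Proposition \ref{prop:Categ-b} reduces stability to positivity of the smallest eigenvalue of the Sturm--Liouville system (\ref{eq:eigv-PDE})--(\ref{eq:eigv-sp2}); the eigenfunctions are written in closed form on each leaf, the constraints yield $8\times8$ linear systems reduced to $3\times3$ (Lemmas \ref{lem:case-i}--\ref{lem:case-iv}), and the determinant equations covering all $\mu\leqslant0$ are shown to have no roots for small $\kappa L$ by Taylor expansion with remainder, using $\kappa l<\pi/6$ repeatedly. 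To repair your argument you would have to perform an equivalent explicit estimate of the constrained form at $L=0$ and a uniform continuity statement as $L\to0^{+}$, which is delicate because $M_{1}$ degenerates and the two spine conditions collide in the limit.
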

This is in contrast to the 2-phase partitioning, indicating that the
instability of disconnected partitions is specific to the 2-phase
partitioning. Unstable triple junction configurations of the same
topology in convex domains exist, as it is shown in Section \ref{sec:App-TJ-R3}.
Figure \ref{fig:Intro-st-unst-TJs} shows the geometric characteristics
of stable (type II) and unstable (type I) configurations. The quantity
$\alpha$ appears in the formula of the second variation of area for
a triple junction system (see equations \ref{eq:104}). We established
Theorem \ref{thm:stab-discon-tj} by proving that the second variation
of the area of the double triple junction system (which is by hypothesis
minimal, i.e. a critical point of the area functional) is positive
for all nontrivial admissible variations. The fundamentals of the
method are briefly presented in Section \ref{sec:Spectr-Anal} (see
also \cite{key-32}).

\begin{figure}
\includegraphics[scale=0.75]{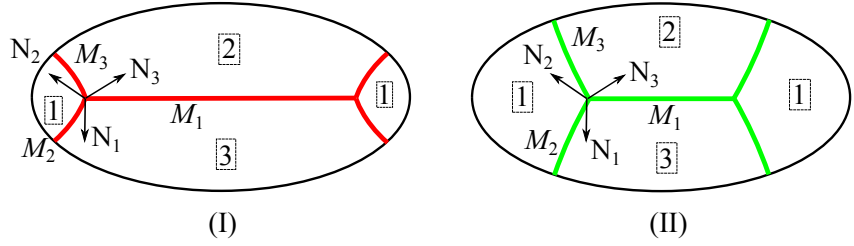}

\caption{\label{fig:Intro-st-unst-TJs}Disconnected triple junction configurations
in 2-dimensional space. Boxed numbers indicate phases. Phase 1 is
disconnected, while phases 2, 3 are connected in both cases. $M_{1},M_{2},M_{3}$
are the interfaces associated with the left triple junction. $N_{1},N_{2},N_{3}$
are the unit normal fields of the respective interfaces. In the shown
orientation, configuration (I) has $\alpha=\frac{\sqrt{3}}{4}\left(\kappa_{2}-\kappa_{3}\right)>0$,
while configuration (II) has $\alpha<0$. $\kappa_{2}$, $\kappa_{3}$
are the signed curvatures of the respective interfaces, defined by
$\kappa_{i}=\left<\frac{dT_{i}}{ds},N_{i}\right>$ for each interface,
$T_{i}$ being the unit tangent field of $M_{i}$, which is considered
parametrized by arc length $s$. }
\end{figure}

The basis of our analysis is a formula for the second variation of
area for minimal triple junction partitions with volume constraints
in $\mathbb{R}^{3}$ (Theorem \ref{thm:sec-var-TJ}):
\begin{thm*}[2nd variation of area for minimal triple junctions with volume constraints
in $\mathbb{R}^{3}$]
Let $\Omega$ be a domain in $\mathbb{R}^{3}$, $T=(T_{j})_{j=1}^{r}$,
$T_{j}=(M_{pj})_{p=1}^{3}$, a minimal three-phase partition of $\Omega$
by a set of $r$ $C^{2}$ triple junctions with volume constraints,
and $w$ an admissible variation satisfying (\ref{eq:3-4}) and the
volume constraints. On each leaf $M_{pj}$ we have the splitting $w=u_{pj}+v_{pj}$,
$u_{pj}\in TM_{pj}$, $v_{pj}\in NM_{pj}$, and we set $f_{pj}=w\cdot N_{pj}=v_{pj}\cdot N_{pj}$,
$N_{pj}$ being the unit normal field of $M_{pj}$. Then the following
formula holds for the second variation of the area functional,
\[
\begin{gathered}\delta^{2}A^{\star}(T)=\sum_{p,j}\gamma_{pj}\int_{M_{pj}}\left(|\mathrm{grad}_{M_{pj}}f_{pj}|^{2}-|B_{M_{pj}}|^{2}f_{pj}^{2}\right)\\
-\sum_{j=1}^{r}\sum_{p=1}^{3}\gamma_{pj}\int_{\partial M_{pj}\cap\varSigma}f_{pj}^{2}II_{\varSigma}(N_{pj},N_{pj})\\
+\sum_{j=1}^{r}\sum_{p=1}^{3}\gamma_{pj}\int_{S_{j}}f_{pj}h_{pj}II_{M_{pj}}(\nu_{pj},\nu_{pj})
\end{gathered}
\]
where $S_{j}$ is the spine of $T_{j}$ and $\nu_{pj}\in TM_{pj}$
is the unit normal field of $\partial M_{pj}\cap S_{j}$.
\end{thm*}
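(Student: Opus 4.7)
The strategy is to compute the second variation leaf by leaf using the classical Jacobian expansion of area, sum the contributions weighted by $\gamma_{pj}$, and use the first--order optimality (Euler--Lagrange) conditions of the constrained problem to cancel all linear terms. To handle the volume constraints I would work with the augmented functional
\[
A^{\star}(T) := A(T) - \sum_{i} \lambda_{i} |\Omega_{i}|,
\]
where the $\lambda_{i}$ are the Lagrange multipliers enforcing $|\Omega_{i}| = V_{i}$. Since $T$ is a constrained critical point, $\delta A^{\star}(T) = 0$ for every admissible $w$, so the Taylor expansion of $t \mapsto A^{\star}(T^{t})$ at $t=0$ has vanishing linear part and $\delta^{2} A^{\star}(T)$ agrees with $\delta^{2} A(T)$ on admissible volume--preserving variations.

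On each leaf I would parametrize the variation by a smooth family $\Phi_{pj}^{t}: M_{pj} \to \mathbb{R}^{3}$ of initial velocity $w|_{M_{pj}} = u_{pj} + f_{pj} N_{pj}$ and expand
\[
A(M_{pj}^{t}) = \int_{M_{pj}} J_{pj}^{t}\, d\mathcal{H}^{2}
\]
to second order in $t$. The standard computation produces the interior Jacobi density $|\mathrm{grad}_{M_{pj}} f_{pj}|^{2} - |B_{M_{pj}}|^{2} f_{pj}^{2}$, a divergence term whose integral contributes to the boundary, and an $H_{M_{pj}} f_{pj}^{2}$--type term that is absorbed by the Lagrange--multiplier adjustment via minimality. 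The tangential part $u_{pj}$ contributes no interior second--order term; its effect is concentrated on the moving boundary $\partial M_{pj} = (\partial M_{pj} \cap \Sigma) \cup S_{j}$. On the $\Sigma$--part of the boundary, admissibility forces the boundary curve to slide inside $\Sigma$, and since minimality gives the natural orthogonality $N_{pj} \perp \Sigma$ there, the second--order correction needed to remain on $\Sigma$ is driven by the shape of $\Sigma$ in the direction $N_{pj}$, producing exactly the term $-f_{pj}^{2} II_{\Sigma}(N_{pj}, N_{pj})$ after cancelling the first--order divergence contributions. This piece reproduces, per leaf, the classical Ros--Vergasta second variation for capillary surfaces.

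The spine step is the main obstacle. On $S_{j}$ the three leaves of $T_{j}$ share a common velocity $Y$, so the three decompositions $Y = u_{pj}|_{S_{j}} + f_{pj}|_{S_{j}} N_{pj}$ are not independent but are linked by the Young balance condition (Taylor equilibrium weighted by $\gamma_{pj}$). Writing $h_{pj} := \langle u_{pj}, \nu_{pj} \rangle$ for the inward conormal component, the second--order area change coming from the conormal push of $\partial M_{pj} \cap S_{j}$ along a leaf with nontrivial shape in the $\nu_{pj}$ direction works out to $f_{pj} h_{pj} II_{M_{pj}}(\nu_{pj}, \nu_{pj})$. The Young angle identity, combined with the way the three tangent components of the single ambient vector $Y$ recombine in the plane normal to $S_{j}$, is precisely what cancels the potential cross--leaf terms and makes the spine integral split as one integral per leaf. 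Correct bookkeeping of the shared--velocity constraint, the angle cancellation, and the sign conventions for $\nu_{pj}$ and $II_{M_{pj}}$ is where the computation is most easily mishandled. Once assembled and summed over $(p,j)$ with weights $\gamma_{pj}$, the remaining first--order residuals vanish by the Euler--Lagrange equations for $A^{\star}$, and the stated formula follows.
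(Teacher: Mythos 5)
Your overall route is the one the paper takes: augment the area functional with Lagrange multiplier terms, compute the second variation leaf by leaf, use the first-order (Euler--Lagrange and Young) conditions to simplify, and split the boundary contribution into a $\Sigma$-part and a spine part. But there is a genuine gap in your handling of the volume constraint, and it propagates into a false intermediate claim. You assert that $\delta^{2}A^{\star}(T)$ agrees with $\delta^{2}A(T)$ on admissible volume-preserving variations, and separately that the tangential part $u_{pj}$ contributes no interior second-order term. Neither holds for the class of variations actually in play. Admissible variations here are flows $\dot\xi=w(\xi)$ whose velocity satisfies only the \emph{first-order} volume constraint $\int_{\partial\Omega_{i}}w\cdot N=0$; the second variation of volume along such a flow is $\delta^{2}|\Omega_{i}|=\int_{\partial\Omega_{i}\setminus\Sigma}(w\cdot N)\,\mathrm{div}_{\mathbb{R}^{3}}w$, which is generically nonzero, so the multiplier terms cannot be discarded. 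Moreover, the second variation of area of a single constant-mean-curvature leaf contains the interior block $\int_{M_{pj}}\kappa_{pj}\left[II_{M_{pj}}(u,u)-2f\,\mathrm{div}_{M_{pj}}u+\kappa_{pj}f^{2}-a\cdot N_{pj}\right]$, which depends essentially on $u$ and does not vanish unless $\kappa_{pj}=0$. In the paper these two ``extra'' pieces cancel each other: the first-order conditions convert the multipliers into $\gamma_{pj}\kappa_{pj}$, and the pointwise identity $\left\langle D_{w}w-(\mathrm{div}_{\mathbb{R}^{3}}w)w,N\right\rangle =II_{M}(u,u)+\left\langle u,\mathrm{grad}_{M}f\right\rangle -f\,\mathrm{div}_{M}u+\kappa f^{2}$ collapses their sum into pure boundary terms. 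Without computing $\delta^{2}|\Omega_{i}|$ explicitly and carrying out this cancellation, the interior integrand does not reduce to the Jacobi form $|\mathrm{grad}_{M}f|^{2}-|B_{M}|^{2}f^{2}$ and the formula does not assemble. Attributing the absorption only to an ``$Hf^{2}$-type term'' misses that the entire $u$-dependent interior block is what must be absorbed.

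On the spine you correctly identify the delicate point, but the cancellation is not a consequence of the Young balance alone: one also needs $\sum_{p}\gamma_{pj}h_{pj}=\sum_{p}\gamma_{pj}\,w\cdot\nu_{pj}=0$ (which does follow from Young's law because all three leaves share the same $w$), together with a pointwise rewriting of $f\,II_{M}(\tau,\nu)$ in terms of $\left\langle D_{\tau}w,\nu\right\rangle$ and tangential derivatives of the components of $w$ along the spine, before the cross term $\sum_{p}\gamma_{pj}f_{pj}II_{M_{pj}}(\tau_{j},\nu_{pj})$ is seen to vanish and only $\sum_{p}\gamma_{pj}f_{pj}h_{pj}II_{M_{pj}}(\nu_{pj},\nu_{pj})$ survives. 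As written, this step is asserted rather than proved, and the sign convention for $\nu_{pj}$ (the paper uses the outward conormal, you use the inward one) would flip the sign of the spine term if not tracked consistently.
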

As an application of this theorem, we prove in Section \ref{sec:App-TJ-R3}
the stability of the previously mentioned class of triple junction
partitions.

In order to obtain Theorem \ref{thm:sec-var-TJ}, which holds in dimension
three, we first need to extend the second variation formula (\ref{eq:2-1})
in the following Proposition (Proposition \ref{prop:3-1.3}, Section
\ref{sec:2nd-var-formulas}), which works in all dimensions. The developed
formulas apply to constant mean curvature manifolds allowing for \emph{tangential
variations}:
\begin{prop*}[2nd variation of area for constant mean curvature manifolds in $\mathbb{R}^{3}$]
Let $M$ be $C^{2}$-hypersurface of $\mathbb{R}^{n}$ with boundary.
We assume that $M$ has constant mean curvature $\kappa$ and $w$
is a variation compactly supported in $M$ and whose support is contained
in a chart of $M$. Further let $N$ be the unit normal field of $M$
in a chart containing the support of $w$, $\nu$ the unit outward
normal of $\partial M$ which is tangent to $M$, $u=w^{\top}$, $v=w^{\bot}$,
and $f=w\cdot N$. Then the second variation of the area of $M$ is
given by
\[
\begin{alignedat}{1}\delta^{2}A(M)= & \int_{M}\left(|\mathrm{grad}_{M}f|^{2}-|B_{M}|^{2}f^{2}\right)+\\
 & \int_{M}\kappa\left[II_{M}(u,u)-2f\mathrm{div}_{M}u+\kappa f^{2}-a\cdot N\right]+\\
 & \int_{\partial M}\left[(u\cdot\nu)\mathrm{div}_{M}u-\left\langle \nabla_{u}u,\nu\right\rangle +2fII_{M}(u,\nu)+a\cdot\nu\right]
\end{alignedat}
\]
and for variations satisfying (\ref{eq:3-4})
\[
\begin{alignedat}{1}\delta^{2}A(M)= & \int_{M}\left(|\mathrm{grad}_{M}\,f\,|^{2}-|B_{M}|^{2}f^{2}\right)+\\
 & \int_{M}\kappa\left[II_{M}(u,u)-2f\mathrm{div}_{M}u+\kappa f^{2}-a\cdot N\right]+\\
 & \int_{\partial M}\left[(u\cdot\nu)\mathrm{div}_{M}u+fII_{M}(u,\nu)+f\left\langle D_{N}w,\nu\right\rangle \right]
\end{alignedat}
\]
where
\[
|B_{M}|^{2}=g^{ik}g^{jl}B_{ij}B_{kl}=B_{j}^{i}B_{i}^{j},\quad B_{ij}=II_{M}(E_{i},E_{j}),
\]
and $\left(E_{i}(p)\right)_{i=1}^{n-1}$ is a local basis of $T_{p}M$.
\end{prop*}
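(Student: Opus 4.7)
The plan is to compute $\delta^{2}A(M)$ directly from its definition, using a local parametrization $X:U\to M$ of a chart containing the support of $w$. Extending the variation as $X^{t}=X+tw+\tfrac{t^{2}}{2}a+o(t^{2})$, one has $A(M^{t})=\int_{U}\sqrt{\det g^{t}}$ with $g_{ij}^{t}=\langle\partial_{i}X^{t},\partial_{j}X^{t}\rangle$. Differentiating twice in $t$ and evaluating at $t=0$ gives the standard expansion
\[
\partial_{t}^{2}\sqrt{\det g^{t}}\big|_{t=0}=\bigl[\tfrac{1}{2}g^{ij}\ddot{g}_{ij}-\tfrac{1}{2}g^{ik}g^{jl}\dot{g}_{ij}\dot{g}_{kl}+\tfrac{1}{4}(g^{ij}\dot{g}_{ij})^{2}\bigr]\sqrt{\det g},
\]
with $\dot{g}_{ij}=\langle\partial_{i}w,E_{j}\rangle+\langle E_{i},\partial_{j}w\rangle$ and $\ddot{g}_{ij}=2\langle\partial_{i}w,\partial_{j}w\rangle+\langle\partial_{i}a,E_{j}\rangle+\langle E_{i},\partial_{j}a\rangle$ at $t=0$.

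The main task is then to rewrite this intrinsically on $M$. Substituting the orthogonal decomposition $w=u+fN$ and using the Weingarten/Gauss identities $D_{E_{i}}N=-B_{i}^{k}E_{k}$ and $D_{E_{i}}u=\nabla_{E_{i}}^{M}u+II_{M}(E_{i},u)N$, one computes $g^{ij}\dot g_{ij}=2(\mathrm{div}_{M}u-f\kappa)$, while the remaining traces produce expressions polynomial in $f$, $u$, $B_{ij}$, $\Gamma_{ij}^{k}$, and first derivatives of $f$, $u$. The constant mean curvature assumption enters precisely where $g^{ij}B_{ij}=\kappa$ can be pulled out as a scalar multiplier. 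After expansion, the pure-normal piece reproduces the classical expression $|\mathrm{grad}_{M}f|^{2}-|B_{M}|^{2}f^{2}$ (after integrating by parts the $f\,\Delta_{M}f$-type term, and separating off the $\kappa^{2}f^{2}$ summand that will be absorbed into the CMC correction), while the tangential and cross pieces collect into $\kappa\bigl[II_{M}(u,u)-2f\,\mathrm{div}_{M}u+\kappa f^{2}\bigr]$ in the interior plus exact-divergence residuals on $M$.

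The acceleration $a$ enters only through $\ddot{g}_{ij}$. Splitting $a=a^{\top}+(a\cdot N)N$ and applying Gauss yields
\[
g^{ij}\langle\partial_{i}a,E_{j}\rangle=\mathrm{div}_{M}(a^{\top})-\kappa(a\cdot N),
\]
whose interior part supplies the $-\kappa(a\cdot N)$ summand and whose divergence part, by Stokes's theorem on $M$, produces the boundary term $a\cdot\nu$. The remaining exact-divergence residuals from the tangential expansion become, via Stokes, the $-\langle\nabla_{u}u,\nu\rangle$ boundary term (after the CMC condition has absorbed the corresponding interior cross contributions) and the $2f\,II_{M}(u,\nu)$ boundary term (from $\mathrm{div}_{M}(fB_{i}^{k}u^{i}E_{k})$-type manipulations combined with the Codazzi identity). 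Assembling these contributions yields the first displayed formula; the second then follows by invoking hypothesis (\ref{eq:3-4}) on the boundary combination $a\cdot\nu-\langle\nabla_{u}u,\nu\rangle+2fII_{M}(u,\nu)$ and rewriting it as $(u\cdot\nu)\mathrm{div}_{M}u+fII_{M}(u,\nu)+f\langle D_{N}w,\nu\rangle$.

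The main obstacle, in my view, is the bookkeeping in the middle step: carefully disentangling the pure-normal, pure-tangential, and cross contributions to $\dot g_{ij}\dot g_{kl}$ and $\langle\partial_{i}w,\partial_{j}w\rangle$ so that the interior integrand separates cleanly into the normal-variation part plus a $\kappa$-multiplied CMC correction, with every residual term an exact divergence on $M$. Consistent sign-tracking in Weingarten and Gauss, and systematic use of $\kappa=g^{ij}B_{ij}$ to collapse stray $\mathrm{tr}\,B$ factors, should be the crux; once the interior integrand is in the stated form, the passage to boundary integrals is essentially algebraic.
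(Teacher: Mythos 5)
Your proposal is correct and follows essentially the same route as the paper: the paper simply cites the generic second-variation formula (\ref{eq:2-1}) from Simon's lectures rather than re-deriving it from the parametrization, and then carries out exactly the normal/tangential/cross decomposition you describe, using the Weingarten map, the Mainardi--Codazzi equations (to get $B_{\,i|j}^{j}=0$ under the CMC hypothesis), the Ricci commutation identity together with the Gauss equation to produce the $\kappa\,II_{M}(u,u)$ term, and the divergence theorem for the boundary contributions. The passage to the second formula via $a=D_{w}w$ and the pointwise identity $a\cdot\nu=f\left\langle D_{N}w,\nu\right\rangle +\left\langle \nabla_{u}u,\nu\right\rangle -fII_{M}(u,\nu)$ on $\partial M$ is precisely the content of the paper's Lemma \ref{lem:3-1.9}.
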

The precise formulation of the variational problems considered in
this paper, along with notation and well-known facts used, is given
in Section \ref{sec:Notation-and-Preliminaries}. For the reader's
convenience, in Section \ref{sec:1st-var} we briefly formulate facts
related to the first variation of area in a form suitable for triple
junction partitions.

\section{\label{sec:Notation-and-Preliminaries}Notation and Preliminaries}

Throughout this paper we use manifolds with boundary (see \cite{key-33}
p. 478 for a definition).
\begin{defn}
Let $\Omega$ be a domain of $\mathbb{R}^{3}$. By a $C^{r}$\emph{
triple junction} in $\Omega$ (see Figure \ref{fig:TJ}) we mean a
collection of three 2-dimensional $C^{r}$ submanifolds of $\mathbb{R}^{3}$
with boundary, $(M_{i})_{i=1}^{3}$, having the same boundary in $\Omega$
\[
\partial M_{i}\cap\Omega=S,\quad i=1,2,3
\]
which is called \emph{the spine} of the triple junction. We will refer
to the manifolds $M_{i}$ as \emph{the leaves} of the triple junction.
Triple junctions in $\mathbb{R}^{2}$ are defined analogously.
\end{defn}

\begin{defn}
\label{def:var}Let $M$ be a $n$-dimensional $C^{1}$ submanifold
of $\mathbb{R}^{m}$ with boundary, $V$ an open subset of $\mathbb{R}^{m}$
such that $V\cap M\neq\emptyset$. A variation of $M$ is a collection
of diffeomorphisms $(\xi^{t})_{t\in I}$, $I=\,]-\delta,\delta[$,
$\delta>0$, $\xi^{t}:V\to V$ such that\cite{key-5}

(i) The function $\xi(t,x)=\xi^{t}(x)$ is $C^{2}$

(ii) $\xi^{0}=id_{V}$

(iii) $\xi^{t}|_{V\setminus K}=id_{V\setminus K}$ for some compact
set $K\subset V$. \hfill{}$\square$
\end{defn}

In place of the $\xi^{t}$ we often consider their extension by identity
to all of $\mathbb{R}^{m}$. With each variation we associate the
\emph{first} and \emph{second variation fields}
\[
w(x)=\xi_{t}(0,x),\quad a(x)=\xi_{tt}(0,x)
\]
also known as \emph{velocity} and \emph{acceleration fields}\cite{key-6},
$\xi_{t}$, $\xi_{tt}$ denoting first and second partial derivatives
in $t$. By the \emph{support of a variation} we mean the support
of $w$. We set $M^{t}:=\xi^{t}(M)$ for the variation of $M$.

This definition extends readily to triple junctions, which, as individual
geometric objects, are not submanifolds of $\mathbb{R}^{m}$. Letting
$T=(M_{i})_{i=1}^{3}$ be a triple junction, and $(\xi^{t})_{t\in I}$
a variation, the triple junctions 
\[
T^{t}=(\xi^{t}(M_{i}))_{i=1}^{3},\;t\in I
\]
are a variation of $T$. The 1-dimensional submanifolds
\[
S^{t}=\xi^{t}(S),\;t\in I
\]
are a variation of the spine of $T$.

Let $M$ be a submanifold of $\mathbb{R}^{m}$. For a vector field
$X$ of $\mathbb{R}^{m}$ defined on a domain $V$ of $\mathbb{R}^{m}$
we define its tangent and normal parts $X^{\top}(p)\in T_{p}M$ and
$X^{\bot}(p)\in N_{p}M$, $p\in M\cap V$. $TM$ and $NM$ are the
tangent and normal bundles of $M$ respectively. The notation $X\in TM$
is an abbreviation for $X(p)\in T_{p}M$ for $p$ in an open subset
of $M$. Given any two vector fields $X$, $Y$ on $V$, $D_{Y}X$
denotes the directional derivative of $X$ in direction $Y$ in $\mathbb{R}^{m}$.
When $u,v\in TM$, 
\[
\nabla_{v}u=(D_{v}u)^{\top}\in TM
\]
is the \emph{covariant derivative of $u$ in direction} $v$. The
\emph{covariant derivative} $\nabla u$ of $u$ is a $(1,1)$-tensor
field defined by
\[
\nabla u(\omega,v)=\omega(\nabla_{v}u),\quad v\in TM,\:\omega\in T^{\star}M,
\]
where $T_{p}^{\star}M$ is the dual space of $T_{p}M$. The \emph{components
of the covariant derivative} $\nabla u$ are defined by
\[
u_{\,|j}^{i}=\nabla u(E^{i},E_{j})=dq^{i}\left(\nabla_{E_{j}}u\right).
\]
$E_{i}(p)$ is a basis of $T_{p}M$ and $E^{j}(p)\equiv dq^{j}(p)$
is the corresponding dual basis of $T_{p}^{\star}M$.

The normal part of the directional derivative $B(u,v)=(D_{v}u)^{\bot}\in NM$
defines the \emph{2nd fundamental form} tensor. The \emph{mean curvature
vector} is given by the trace of this tensor
\begin{equation}
H(p)=\sum_{i}B_{p}(E_{i},E_{i})\label{eq:def-H}
\end{equation}
in an \emph{orthonormal basis} $(E_{i})$ of $T_{p}M$. If $M$ is
a hypersurface, the \emph{scalar mean curvature} is defined by
\begin{equation}
\kappa=H\cdot N\label{eq:def-mean-curv}
\end{equation}
where $N$ is a unit normal field of $M$. Similarly, when $M$ is
a hypersurface of $\mathbb{R}^{m}$, we define the \emph{scalar version
of the 2nd fundamental form}:
\begin{equation}
II_{M}(u,v)=B(u,v)\cdot N\label{eq:def-mean-2nd-ff}
\end{equation}
The \emph{Weingarten mapping} of a hypersurface $M$ of $\mathbb{R}^{n+1}$
is defined fiberwise by
\[
W(p):T_{p}M\to T_{p}M,\;u\mapsto D_{u}N\quad(p\in M)
\]

When index notation is used, \emph{summation over pairs of identical
indices} (which for tensor expressions must be pairs of contravariant-convariant
indices) is assumed throughout.

The \emph{gradient} of a function $f:U\to\mathbb{R}$ defined in an
open neighborhood $U$ of $M$, is given by
\[
\mathrm{grad}_{M}f=g^{ij}\frac{\partial f}{\partial q^{j}}E_{i}
\]
in a coordinate system $q^{1},\cdots,q^{n}$. In this definition $g^{ij}$
are the contravariant components of the metric tensor $g_{ij}=E_{i}\cdot E_{j}$.
The \emph{divergence} of a tangent vector field $v$ of $M$ defined
in $U$ is the trace of its covariant derivative, i.e.
\[
\mathrm{div}v=\mathrm{tr}\nabla u=\nabla u(E_{i},E^{i})=u_{\,|i}^{i}
\]
For a general vector field $w$ which is not tangent to $M$ the divergence
is defined by
\[
\mathrm{div}_{M}w=\left\langle D_{E_{i}}w,E^{i}\right\rangle =g^{ij}\left\langle D_{E_{i}}w,E_{j}\right\rangle 
\]
 The notation $\left\langle \cdot,\cdot\right\rangle $ is alternately
used to denote scalar product in lengthier expressions.

For the first variation of the area of a manifold with boundary, the
following proposition holds.
\begin{prop}
Let $M$ be a $n$-dimensional $C^{1}$-submanifold of $\mathbb{R}^{m}$
with boundary, and $\xi^{t}$ a variation as in Definition \ref{def:var},
which is compactly supported in $M$. We assume that the support of
$\xi^{t}$ is contained in a chart of $M$. Then the first variation
of the area functional $A$ is given by
\begin{equation}
\delta A(M)=\left.\frac{d}{dt}A(M^{t})\right|_{t=0}=\int_{M}\mathrm{div}{}_{M}wdS\label{eq:1-9}
\end{equation}
If $M$ is additionally a $C^{2}$-hypersurface with boundary,
\begin{equation}
\delta A(M)=-\int_{M}H\cdot wdS+\int_{\partial M}w\cdot\nu ds\label{eq:1-21}
\end{equation}
where $\nu$ is the unit outward pointing normal vector field of $\partial M$
which is tangent to $M$, and $H$ is the mean curvature vector.
\end{prop}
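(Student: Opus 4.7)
The plan is to prove the formula in two stages: first derive the divergence form (\ref{eq:1-9}) by computing $\frac{d}{dt}$ of the area element in a chart, and then specialize to hypersurfaces and split the integrand into tangential and normal parts to obtain (\ref{eq:1-21}).

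For the first formula, I would pick a chart $\varphi:U\to M$ containing the support of $w$, so that $M^t$ is parametrized by $\varphi^{t}:=\xi^{t}\circ\varphi$ on the same parameter set $U$. Writing $g(t)_{ij}=\partial_{i}\varphi^{t}\cdot\partial_{j}\varphi^{t}$, the area reads $A(M^{t})=\int_{U}\sqrt{\det g(t)}\,dq$. Jacobi's formula $\frac{d}{dt}\det g(t)=\det g(t)\,\mathrm{tr}(g(t)^{-1}\dot g(t))$ together with $\dot g_{ij}(0)=\langle D_{E_{i}}w,E_{j}\rangle+\langle E_{i},D_{E_{j}}w\rangle$ yields
\[
\left.\frac{d}{dt}\sqrt{\det g(t)}\right|_{t=0}=\tfrac{1}{2}\sqrt{\det g(0)}\,g^{ij}\dot g_{ij}(0)=\sqrt{\det g(0)}\,g^{ij}\langle D_{E_{i}}w,E_{j}\rangle,
\]
and the right-hand side is exactly $(\mathrm{div}_{M}w)\sqrt{\det g(0)}$ by the definition of $\mathrm{div}_{M}$ given in the Preliminaries. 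Differentiating under the integral (legitimate since $\xi$ is $C^{2}$ and the support is compact) and changing variables back to $M$ produces (\ref{eq:1-9}). The restriction to a single chart avoids any partition-of-unity bookkeeping; the general case follows by linearity of $\delta A$ and a partition of unity subordinate to the support.

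For the hypersurface formula, I would split $w=u+v$ with $u=w^{\top}\in TM$ and $v=w^{\bot}\in NM$, and treat the two pieces of $\int_{M}\mathrm{div}_{M}w$ separately. On the tangential piece, $u$ is a genuine tangent field, so $\mathrm{div}_{M}u=\mathrm{tr}\nabla u$ is the intrinsic divergence on $M$, and the divergence theorem for manifolds with boundary gives $\int_{M}\mathrm{div}_{M}u\,dS=\int_{\partial M}u\cdot\nu\,ds=\int_{\partial M}w\cdot\nu\,ds$, the last equality because $v$ is normal to $M$ while $\nu\in TM$. For the normal piece, differentiate $\langle v,E_{j}\rangle\equiv0$ along $E_{i}$ to get $\langle D_{E_{i}}v,E_{j}\rangle=-\langle v,D_{E_{i}}E_{j}\rangle=-\langle v,B(E_{i},E_{j})\rangle$, where in the last step the tangential part of $D_{E_{i}}E_{j}$ drops out because $v$ is normal. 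Contracting with $g^{ij}$ and using (\ref{eq:def-H}) yields $\mathrm{div}_{M}v=-g^{ij}\langle v,B(E_{i},E_{j})\rangle=-\langle v,H\rangle=-w\cdot H$, since $H$ is normal so $H\cdot u=0$. Adding the two contributions gives (\ref{eq:1-21}).

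The only real subtlety is conceptual rather than technical: one must recognize that (\ref{eq:1-9}) holds under mere $C^{1}$ regularity and makes sense even when no intrinsic mean curvature is available, whereas the passage to (\ref{eq:1-21}) genuinely uses $C^{2}$ regularity (to define $B$ and $H$) and the codimension-one hypothesis (to identify $B$ with its scalar form and to apply the Stokes/divergence theorem in the form stated). The chart hypothesis and the compact support of $w$ are what legitimize both differentiation under the integral and the boundary integration by parts without extra cut-off arguments.
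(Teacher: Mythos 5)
Your proof is correct and is in substance the very argument the paper relies on: the paper gives no proof of this proposition, citing \cite{key-6,key-20}, and your two-stage computation --- Jacobi's formula applied to $\sqrt{\det g(t)}$ to get $\delta A(M)=\int_{M}\mathrm{div}_{M}w$, then the splitting $w=u+v$ with $\int_{M}\mathrm{div}_{M}u=\int_{\partial M}w\cdot\nu$ by the divergence theorem and $\mathrm{div}_{M}v=-\langle v,H\rangle=-H\cdot w$ --- is exactly the standard proof in those references, the latter identity being what the paper later quotes as (\ref{eq:A-16}). The only inaccuracy is your closing claim that codimension one is ``genuinely used'' in passing to (\ref{eq:1-21}): nothing in your argument invokes it, since $g^{ij}\langle v,B(E_{i},E_{j})\rangle=\langle v,H\rangle$ and the divergence theorem for tangent fields hold for $C^{2}$ submanifolds of any codimension, so the formula is true in general and the paper restricts to hypersurfaces only because that is the case it needs.
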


For the proof see \cite{key-6,key-20}. For brevity we will omit the
integration symbols $dS$, $ds$.

Our formulas for the second variation of the area of triple junctions
derive from the following well-known result.
\begin{prop}
Let $M$ be a $n$-dimensional $C^{2}$-submanifold of $\mathbb{R}^{m}$
with boundary, and $\xi^{t}$ a variation of $M$ compactly supported
in $M$, and the support of $\xi^{t}$ is contained in a chart of
$M$. Then the second variation of the area functional is given by
\begin{equation}
\begin{alignedat}{1}\delta^{2}A(M) & =\int_{M}\left(\mathrm{div}_{M}a+(\mathrm{div}_{M}w)^{2}+g^{ij}\left\langle (D_{E_{i}}w)^{\bot},(D_{E_{j}}w)^{\bot}\right\rangle \right.\\
 & \left.-g^{ik}g^{jl}\left\langle (D_{E_{i}}w)^{\top},E_{j}\right\rangle \left\langle (D_{E_{l}}w)^{\top},E_{k}\right\rangle \right).
\end{alignedat}
\label{eq:2-1}
\end{equation}
$(E_{i})_{i=1}^{n}$ are the basis vector fields in a chart containing
the support of the variation.
\end{prop}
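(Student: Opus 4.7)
The plan is to start from formula (2-1), split $w=u+fN$ with $u=w^{\top}$ and $f=w\cdot N$, and reduce the four summands to the claimed Jacobi-plus-boundary form using the constant mean curvature hypothesis together with two targeted integrations by parts on $M$.

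First I reorganize the bulk integrand of (2-1). Using $D_{E_i}u=\nabla_{E_i}u+II_M(E_i,u)N$, $D_{E_i}(fN)=E_i(f)N+fD_{E_i}N$ (tangent since $|N|=1$), and $\langle D_{E_i}N,E_j\rangle=-B_{ij}$, one finds $(D_{E_i}w)^{\bot}=[II_M(E_i,u)+E_i(f)]N$, $\langle(D_{E_i}w)^{\top},E_j\rangle=u_{j|i}-fB_{ij}$, and $\mathrm{div}_M(fN)=-f\kappa$. Expanding and grouping, the $f$-only pieces assemble into $|\mathrm{grad}_M f|^2-|B_M|^2 f^2+\kappa^2 f^2$; the $u$-only pieces into $(\mathrm{div}_M u)^2-u^i_{|j}u^j_{|i}+\sum_i II_M(E_i,u)^2$; and the mixed $uf$-pieces into $-2\kappa f\,\mathrm{div}_M u+2f\sum_{i,j}B_{ij}u^j_{|i}+2\sum_i II_M(E_i,u)E_i(f)$. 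The acceleration term is handled by splitting $a=a^{\top}+(a\cdot N)N$; since $\mathrm{div}_M((a\cdot N)N)=-\kappa(a\cdot N)$, the divergence theorem gives $\int_M\mathrm{div}_M a=\int_{\partial M}a\cdot\nu-\int_M\kappa(a\cdot N)$, supplying the $a\cdot\nu$ boundary term and the $-\kappa(a\cdot N)$ summand inside the middle bracket.

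The key step is converting the $u$-only and mixed contributions to the prescribed boundary terms. For the tangent field $Y=(\mathrm{div}_M u)u-\nabla_u u$, a direct computation using the Ricci identity gives $\mathrm{div}_M Y=(\mathrm{div}_M u)^2-u^i_{|j}u^j_{|i}-\mathrm{Ric}(u,u)$, and the Gauss equation for a hypersurface in the flat $\mathbb{R}^n$ supplies $\mathrm{Ric}(u,u)=\kappa\,II_M(u,u)-\sum_i II_M(E_i,u)^2$. Hence the divergence theorem yields
\[
\int_M\bigl[(\mathrm{div}_M u)^2-u^i_{|j}u^j_{|i}+\textstyle\sum_i II_M(E_i,u)^2-\kappa\,II_M(u,u)\bigr]=\int_{\partial M}\bigl[(u\cdot\nu)\mathrm{div}_M u-\langle\nabla_u u,\nu\rangle\bigr].
\]
For the $uf$-cross term, applying the divergence theorem to $Z^i=fB^i{}_j u^j$ gives $\mathrm{div}_M Z=\sum_i II_M(E_i,u)E_i(f)+f\sum_{i,j}B_{ij}u^j_{|i}$, the $\nabla\kappa$ term being killed by the Codazzi identity $B^i{}_{j|i}=\partial_j\kappa=0$; doubling and integrating produces the $2\int_{\partial M}fII_M(u,\nu)$ boundary contribution. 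Assembling all of the above and collecting the residual bulk $-2\kappa f\,\mathrm{div}_M u+\kappa^2 f^2+\kappa\,II_M(u,u)-\kappa(a\cdot N)$ into the single factor $\kappa[II_M(u,u)-2f\,\mathrm{div}_M u+\kappa f^2-a\cdot N]$ reproduces the first formula verbatim. The second form then follows by using the admissibility condition (3-4) on $\partial M$ to rewrite $-\langle\nabla_u u,\nu\rangle+2fII_M(u,\nu)+a\cdot\nu$ as $fII_M(u,\nu)+f\langle D_Nw,\nu\rangle$.

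The main obstacle is the tangential reduction: identifying the right vector fields $Y$ and $Z$ whose divergences produce exactly the required bulk integrands, and then correctly invoking \emph{both} Gauss (to eliminate the Ricci term) and Codazzi (to eliminate $\nabla\kappa$). The constant mean curvature hypothesis enters precisely at these two points, and it is what permits tangential variations to contribute only through boundary terms rather than additional bulk corrections.
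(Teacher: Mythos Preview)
Your proposal does not address the stated proposition at all. The statement you were asked to prove is the \emph{general} second variation formula (\ref{eq:2-1}) for a $C^2$-submanifold of $\mathbb{R}^m$ with no curvature hypothesis. The paper does not prove this proposition; it simply cites Simon's lecture notes \cite{key-6}. A proof would proceed by differentiating the area integrand $\sqrt{\det g^t}$ twice in $t$ and expanding; there is no decomposition $w=u+fN$ (the codimension need not be one), no constant mean curvature assumption, and no boundary integral in the conclusion.

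What you have written is instead a proof of Proposition~\ref{prop:3-1.3}: you explicitly \emph{start} from (\ref{eq:2-1}), impose that $M$ is a hypersurface with constant mean curvature, split $w=u+fN$, and derive the Jacobi-plus-boundary formulas (\ref{eq:3-19}) and (\ref{eq:3-19-1}). Your argument for that other proposition is in fact essentially the paper's own (same $I,J,K,L$ bookkeeping in different notation, same use of Codazzi for $B^{j}{}_{i|j}=0$ and of Gauss/Ricci for the tangential $u$-terms, same two divergence-theorem reductions). But it is a proof of the wrong statement: you have assumed the proposition you were asked to establish and used it to prove a downstream result.
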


For the proof see \cite{key-6}. 

The above formulas for the first variation of area extend readily
to triple junctions:
\begin{equation}
\delta A(T)=\left.\frac{d}{dt}A(T^{t})\right|_{t=0}=\sum_{i=1}^{3}\gamma_{i}\int_{M_{i}}\mathrm{div}{}_{M_{i}}w\label{eq:1-23}
\end{equation}
\begin{equation}
\delta A(T)=-\sum_{i=1}^{3}\gamma_{i}\int_{M_{i}}H\cdot w+\int_{S}w\cdot\sum_{i=1}^{3}\gamma_{i}\nu_{i}+\sum_{i=1}^{3}\gamma_{i}\int_{\partial M_{i}\cap\Sigma}w\cdot\nu_{i}\label{eq:1-24}
\end{equation}
The extension of formula (\ref{eq:2-1}) for the second variation
of area to triple junctions is not as straight-forward. This task
is undertaken in Section \ref{sec:2nd-var-formulas} after proper
modifications of (\ref{eq:2-1}).

\section{\label{sec:1st-var}First Variation-Young's Equality}

We treat the constraints of triple junction partitions by using Lagrange
multipliers. In the simplest case of a connected partitioning, in
which one triple junction is present, we consider the following modified
area functional
\begin{equation}
A^{\star}(T)=\sum_{i=1}^{3}\gamma_{i}A(M_{i})-\sum_{j=1}^{2}\lambda_{j}(|\Omega_{j}|-V_{j})\label{eq:1-26}
\end{equation}
where $|\Omega_{j}|$ is the Lebesque measure of $\Omega_{j}$, and
$V_{j}$ is the prescribed value for the volume of $\Omega_{j}$.
The introduction of Lagrange multipliers is a matter of convenience,
and one could proceed without them by properly restricting admissible
variations to those preserving the volumes of the $\Omega_{j}$ (see
\cite{key-7,key-32}). In taking the variations of (\ref{eq:1-26})
we can drop the constants $V_{i}$ altogether.

The leaves of a minimal triple junction with volume constraint are
at angles $\vartheta_{i}$ according to Young's law. We formulate
this well-known fact in the simple case of a single triple junction
and then extend it to a more general setting.
\begin{prop}
\label{prop:Young-constr}Let $T=(M_{i})_{i=1}^{3}$ be a $C^{2}$
triple junction partition of $\Omega\subset\mathbb{R}^{3}$ into the
subdomains $\Omega_{j}$ ($j=1,2,3$). If $T$ is minimal, then Young's
equality
\begin{equation}
\frac{\sin\vartheta_{1}}{\gamma_{1}}=\frac{\sin\vartheta_{2}}{\gamma_{2}}=\frac{\sin\vartheta_{3}}{\gamma_{3}}\label{eq:1-30}
\end{equation}
holds on the spine $S$ of $T$. Furthermore, the leaves have constant
(scalar) mean curvature satisfying the relation
\begin{equation}
\gamma_{1}\kappa_{1}+\gamma_{2}\kappa_{2}+\gamma_{3}\kappa_{3}=0\label{eq:1-31}
\end{equation}
where $\kappa_{i}=H_{i}\cdot N_{i}$ is the mean curvature of $M_{i}$. 
\end{prop}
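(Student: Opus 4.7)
The plan is to apply the first-variation formula (\ref{eq:1-24}) to the Lagrange-modified functional $A^{\star}$ of (\ref{eq:1-26}) and extract from $\delta A^{\star}(T)=0$ the three conclusions by successively localizing the test variation in the leaves, along the spine, and near $\Sigma$.

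I would first fix a sign convention: label cyclically so that $N_i$ points from $\Omega_{i+1}$ into $\Omega_{i-1}$ (indices mod $3$), and introduce a third multiplier $\lambda_3$ by the reparametrization $\lambda_1+\lambda_2+\lambda_3=0$ of the two multipliers in (\ref{eq:1-26}) (this is harmless since $|\Omega_1|+|\Omega_2|+|\Omega_3|=|\Omega|$). For an admissible variation with velocity $w$ tangent to $\Sigma$ on $\Sigma$, the divergence theorem gives $\delta|\Omega_j|=\sum_{i\neq j}\varepsilon_{ji}\int_{M_i}w\cdot N_i$ with $\varepsilon_{ji}\in\{\pm 1\}$, and substituting into (\ref{eq:1-24}) weighted by the $\gamma_i$ collapses the stationarity condition into
\[
\delta A^{\star}(T)=\sum_{i=1}^{3}\int_{M_i}\bigl(-\gamma_i H_i-\mu_i N_i\bigr)\cdot w+\int_S w\cdot\sum_{i=1}^{3}\gamma_i\nu_i+\sum_{i=1}^{3}\gamma_i\int_{\partial M_i\cap\Sigma}w\cdot\nu_i=0,
\]
where $\mu_i=\lambda_{i+1}-\lambda_{i-1}$.

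Step two is a leaf-by-leaf argument. Testing with $w$ compactly supported in the open leaf $M_{i_0}$ kills the spine and $\Sigma$-boundary integrals, so the fundamental lemma of the calculus of variations forces $-\gamma_{i_0}H_{i_0}-\mu_{i_0}N_{i_0}=0$ pointwise on $M_{i_0}$; taking the inner product with $N_{i_0}$ gives $\kappa_{i_0}=-\mu_{i_0}/\gamma_{i_0}$, a constant on $M_{i_0}$. Summing $\gamma_i\kappa_i=-\mu_i$ over $i=1,2,3$, the right-hand side telescopes to $0$ since $\mu_1+\mu_2+\mu_3=(\lambda_2-\lambda_3)+(\lambda_3-\lambda_1)+(\lambda_1-\lambda_2)=0$, delivering (\ref{eq:1-31}). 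Step three localizes the remaining test variations in a tubular neighborhood of $S$: the leaf integrals already vanish, so the stationarity identity reduces to the pointwise vectorial balance $\gamma_1\nu_1+\gamma_2\nu_2+\gamma_3\nu_3=0$ on $S$. The three unit vectors $\nu_i$ live in the $2$-plane orthogonal to $S$, so this closure condition is exactly the \emph{triangle of forces} with side lengths $\gamma_i$; the exterior angle at the vertex where $\gamma_j\nu_j$ and $\gamma_k\nu_k$ meet is the supplement of the interior contact angle $\vartheta_i$ between $M_j$ and $M_k$ (with $\{i,j,k\}=\{1,2,3\}$), and the planar law of sines applied to this triangle yields (\ref{eq:1-30}).

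The main obstacle I expect is the sign bookkeeping in the first step: keeping the orientations of the $N_i$, the incidence structure $M_i\subset\partial\Omega_{i+1}\cap\partial\Omega_{i-1}$, and the reparametrized $\lambda_j$ all coherent so that (i) the $\mu_i$ genuinely telescope in Step 2 without any extra hypothesis, and (ii) the vectors $\gamma_i\nu_i$ actually close into a triangle with the correct interior angles in Step 3. A minor verification is that the leftover boundary integral $\sum_i\gamma_i\int_{\partial M_i\cap\Sigma}w\cdot\nu_i$ yields only the orthogonality of each leaf to $\Sigma$ along $\partial M_i\cap\Sigma$ and therefore does not interfere with (\ref{eq:1-30})--(\ref{eq:1-31}). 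Everything else is a routine fundamental-lemma argument.
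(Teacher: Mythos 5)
Your proposal is correct and follows essentially the same route as the paper: first variation of the Lagrange-modified functional, leaf-localized variations to extract the constant-mean-curvature relations $\gamma_i\kappa_i=-\mu_i$ whose sum gives (\ref{eq:1-31}), and spine-localized variations to obtain $\sum_i\gamma_i\nu_i=0$ and hence (\ref{eq:1-30}) by the law of sines. The only cosmetic difference is your symmetric reparametrization with a third multiplier satisfying $\lambda_1+\lambda_2+\lambda_3=0$, where the paper works directly with two multipliers and obtains the same telescoping by adding its three leaf equations.
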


\begin{proof}
Let $(\xi^{t})_{t\in I}$ be any variation with first variation field
$w$. By (\ref{eq:1-26}) we obtain
\[
\delta A^{\star}(T)=\delta A(T)-\sum_{i=1}^{2}\lambda_{i}\left.\frac{d}{dt}|\Omega_{i}^{t}|\right|_{t=0}
\]
where $\Omega_{i}^{t}=\xi^{t}(\Omega_{i})$. Using (\ref{eq:1-24})
and
\begin{equation}
\left.\frac{d}{dt}|\Omega_{i}^{t}|\right|_{t=0}=\int_{\partial\Omega_{i}}w\cdot N_{\partial\Omega_{i}}\label{eq:var-omega}
\end{equation}
where $N_{\partial\Omega_{i}}$ is the unit \emph{outward} normal
field of $\partial\Omega_{i}$, we obtain
\begin{equation}
\delta A^{\star}(T)=\int_{S}w\cdot\sum_{i=1}^{3}\gamma_{i}\nu_{i}-\sum_{i=1}^{3}\gamma_{i}\int_{M_{i}}H\cdot w-\sum_{i=1}^{2}\lambda_{i}\sum_{j\neq i}\int_{M_{j}}N_{\partial\Omega_{i}}\cdot w\label{eq:A-4}
\end{equation}

Expanding out the last two terms on the right side of this equality
and collecting integrals on the same manifold, we obtain
\begin{equation}
\begin{gathered}\int_{M_{1}}(\gamma_{1}H_{1}-\lambda_{2}N_{1})\cdot w+\int_{M_{2}}(\gamma_{2}H_{2}+\lambda_{1}N_{2})\cdot w\\
+\int_{M_{3}}(\gamma_{3}H_{3}-\lambda_{1}N_{3}+\lambda_{2}N_{3})\cdot w
\end{gathered}
\label{eq:A-5}
\end{equation}
Considering successively variations concentrated in the interior of
$M_{1}$, $M_{2}$, $M_{3}$ we obtain
\[
\kappa_{1}\gamma_{1}-\lambda_{2}=0,\quad\kappa_{2}\gamma_{2}+\lambda_{1}=0,\quad\kappa_{3}\gamma_{3}-\lambda_{1}+\lambda_{2}=0
\]
Addition of these three equations gives (\ref{eq:1-31}). Furthermore,
all integrals in (\ref{eq:A-5}) cancel out and (\ref{eq:A-4}) reduces
to
\[
\delta A^{\star}(T)\cdot w=\int_{S}w\cdot\sum_{i=1}^{3}\gamma_{i}\nu_{i}=0
\]
hence
\[
\sum_{i=1}^{3}\gamma_{i}\nu_{i}=0
\]
Recalling that $\nu_{i}(p)\in T_{p}M_{i}$ and observing that the
vectors $\gamma_{i}\nu_{i}$ ($i=1,2,3$) form a triangle, by the
sine law of Euclidean geometry we obtain (\ref{eq:1-30}).
\end{proof}
The presence of many triple junctions requires consideration of the
following modified functional:
\begin{equation}
A^{\star}(M)=A(M)-\sum_{j=1}^{2}\lambda_{j}\left(\sum_{k=1}^{P_{j}}\left|\Omega_{jk}\right|-V_{j}\right)\label{eq:modif-area-func}
\end{equation}
In this formula, $P_{j}$ is the number of distinct sets which comprise
phase $j$ (indexed by $k$); $V_{j}$ is the volume of phase $j$,
and $\lambda_{j}$ is the Lagrange multiplier corresponding to the
volume constraint for the $j$-th phase. Since $\sum_{j=1}^{3}\sum_{k=1}^{P_{j}}\left|\Omega_{jk}\right|=\left|\Omega\right|$,
there are only two linearly independent constraints.

\begin{figure}
\includegraphics[scale=0.7]{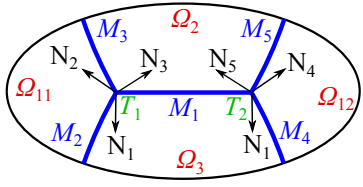}

\caption{\label{fig:discon-d-tj}Disconnected triple junction partition for
Example \ref{exa:primary} and Proposition \ref{prop:1st-var-gen}.}

\end{figure}
\begin{example}
\label{exa:primary}For the disconnected 3-phase partitioning of $\Omega$
(see Figure \ref{fig:discon-d-tj}) by a system of two triple junctions,
the modified area functional is given by
\begin{equation}
A^{\star}(M)=A(T)-\lambda_{2}\left|\Omega_{2}\right|-\lambda_{3}\left|\Omega_{3}\right|\label{eq:example-func}
\end{equation}
In this expression, 
\[
A(T)=\sum_{i=1}^{5}\gamma_{i}A(M_{i}),
\]
$\gamma_{1}=\epsilon_{23}$, $\gamma_{2}=\gamma_{4}=\epsilon_{13}$,
$\gamma_{3}=\gamma_{5}=\epsilon_{12}$, and $\epsilon_{ij}=\epsilon_{ji}$
is the interfacial energy density of the interface separating phases
$i$, $j$. The volume constants $V_{j}$ were dropped as they play
no part in the variational process. On using the volume constraints
for phases 1 and 2, the modified area functional assumes the form
\begin{equation}
A^{\star}(M)=A(T)-\lambda_{1}\left(\left|\Omega_{11}\right|+\left|\Omega_{12}\right|\right)-\lambda_{2}\left|\Omega_{2}\right|\label{eq:example-func-1}
\end{equation}
We can also use all three constraints, which does not alter the final
formulas and results. Occasionally, we use a notation indicating the
triple junction, which an interface belongs to. In this notation the
area functional becomes
\[
A(T)=\sum_{p=1}^{3}\sum_{j=1}^{2}\gamma_{pj}A(M_{pj})
\]
The indices $p$ and $j$ stand for ``phase'' and ``junction'',
$M_{pj}$ is the interface opposite to phase $p$ at junction $j$
(for example, in Figure \ref{fig:discon-d-tj} $M_{22}$ is shown
as $M_{4}$) and $\gamma_{pj}$ the corresponding surface tension.
Since there is only one interface between phases 2 and 3, $M_{11}\equiv M_{12}\equiv M_{1}$,
and this term occurs only once in the sum. In a similar fashion, we
drop superfluous indices from subsets. For example, referring to Figure
\ref{fig:discon-d-tj}, we write $\Omega_{2}$ and $\Omega_{3}$ instead
of $\Omega_{21}$ and $\Omega_{31}$. The connection between notations
is
\begin{equation}
\begin{array}{cccccccc}
M_{11} & \gamma_{11} & \leftrightarrow & M_{1} & \gamma_{1} & \leftrightarrow & M_{23}^{(1)}\equiv M_{23}^{(2)} & \epsilon_{23}\\
M_{21} & \gamma_{21} & \leftrightarrow & M_{2} & \gamma_{2} & \leftrightarrow & M_{13}^{(1)} & \epsilon_{13}\\
M_{31} & \gamma_{31} & \leftrightarrow & M_{3} & \gamma_{3} & \leftrightarrow & M_{12}^{(1)} & \epsilon_{12}\\
M_{22} & \gamma_{22} & \leftrightarrow & M_{4} & \gamma_{4} & \leftrightarrow & M_{13}^{(2)} & \epsilon_{13}\\
M_{32} & \gamma_{32} & \leftrightarrow & M_{5} & \gamma_{5} & \leftrightarrow & M_{12}^{(2)} & \epsilon_{12}
\end{array}\label{eq:conn-not}
\end{equation}
The first two columns refer to the second notation, the next two to
the first notation and the last two to the interfacial system (see
Section \ref{sec:Intro}, text below equation (\ref{eq:1-1})). In
the latter, $M_{pq}^{(j)}$ is the interface between phases $p,q$
attached to triple junction $j$. \hfill{}$\square$

The following theorem extends Proposition \ref{prop:Young-constr}
to general disconnected triple junction partitions.
\end{example}

\begin{prop}
\label{prop:1st-var-gen}Let $T=(T_{j})_{j=1}^{r}=(M_{pj})_{p=1,\cdots,3;j=1,\cdots,r}$
be a three-phase partitioning of $\Omega\subset\mathbb{R}^{3}$ by
a system of $r$ $C^{2}$-triple junctions, into the domains $\Omega_{pj}$
$(j=1,\cdots,r;p=1,2,3)$. Further let $N_{pj}$ be the unit normal
field of $M_{pj}$ and \textup{$N_{\Sigma}$} be the unit normal field
of $\Sigma$. If $T$ is minimal, then

\emph{(i)} Young's equality (\ref{eq:1-30}) holds for each triple
junction in the system. Equivalently, the following equalities
\begin{equation}
\sum_{p=1}^{3}\gamma_{pj}\nu_{pj}=0\label{eq:3.10-1}
\end{equation}
hold for all triple junctions $j$ of the system.

\emph{(ii)} The scalar mean curvature $\kappa_{pj}=H_{pj}\cdot N_{pj}$
of each interface $M_{pj}$ is constant and
\begin{equation}
\kappa_{p1}=\cdots=\kappa_{pr}\equiv\kappa_{p}\label{eq:id-mean-curv}
\end{equation}
for all $p=1,2,3$.

\emph{(iii)} The scalar mean curvatures $\kappa_{p}$ satisfy the
relation
\begin{equation}
\sum_{p=1}^{3}\gamma_{p}\kappa_{p}=0\label{eq:mean-curv-id}
\end{equation}
on each triple junction.

\emph{(iv)} Each $M_{pj}$ is normal to $\Sigma$, i.e. on each $M_{pj}\cap\Sigma$
we have $N_{pj}\cdot N_{\Sigma}=0$ or $N_{pj}\in T\Sigma$.
\end{prop}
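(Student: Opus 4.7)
The plan is to imitate the proof of Proposition~\ref{prop:Young-constr} applied to the modified functional (\ref{eq:modif-area-func}), the additional bookkeeping being needed because several leaves may separate the same pair of phases and because $\Sigma$ now contributes a boundary piece. For clarity I would use all three Lagrange multipliers $\lambda_{1},\lambda_{2},\lambda_{3}$; the redundancy $\sum_{p}\sum_{k}|\Omega_{pk}|=|\Omega|$ is harmless and makes the resulting Euler--Lagrange equations symmetric in the three phases. Applying (\ref{eq:1-24}) to each leaf $M_{pj}$ and (\ref{eq:var-omega}) to each component of every phase, and then collecting terms by leaf, spine $S_{j}$ and wall $\partial M_{pj}\cap\Sigma$, I expect the first variation to take the schematic form
\[
\delta A^{\star}(T)=\sum_{p,j}\int_{M_{pj}}\bigl(\gamma_{pj}H_{pj}-(\lambda_{q(p)}-\lambda_{r(p)})N_{pj}\bigr)\cdot w+\sum_{j=1}^{r}\int_{S_{j}}w\cdot\sum_{p=1}^{3}\gamma_{pj}\nu_{pj}+\sum_{p,j}\gamma_{pj}\int_{\partial M_{pj}\cap\Sigma}w\cdot\nu_{pj},
\]
where $\{q(p),r(p)\}=\{1,2,3\}\setminus\{p\}$ with an orientation of $N_{pj}$ chosen independently of $j$.

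For items (ii) and (iii) I would test $\delta A^{\star}(T)=0$ against variations $w$ compactly supported in the interior of a single leaf $M_{pj}$: both boundary sums drop out, and arbitrariness of $w$ forces $\gamma_{pj}H_{pj}=(\lambda_{q(p)}-\lambda_{r(p)})N_{pj}$ pointwise on $M_{pj}$. Dotting with $N_{pj}$ shows that $\kappa_{pj}$ is constant and equals $(\lambda_{q(p)}-\lambda_{r(p)})/\gamma_{pj}$; since neither the pair $\{q(p),r(p)\}$ nor the surface tension $\gamma_{pj}$ depends on $j$, the identity (\ref{eq:id-mean-curv}) is immediate. Summing the three such relations for the three leaves meeting at a fixed spine cancels the $\lambda$'s and yields (\ref{eq:mean-curv-id}). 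For item (i), once the interior piece has been eliminated, testing against $w$ supported near a single spine $S_{j}$ leaves only $\int_{S_{j}}w\cdot\sum_{p}\gamma_{pj}\nu_{pj}$, whence (\ref{eq:3.10-1}) follows by the fundamental lemma of the calculus of variations; the sine law applied to the closed triangle formed by the $\gamma_{pj}\nu_{pj}$ in the normal plane of $S_{j}$ then produces Young's equality (\ref{eq:1-30}), exactly as in Proposition~\ref{prop:Young-constr}.

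Item (iv) is where the outer boundary enters. Admissibility forces $w|_{\Sigma}\in T\Sigma$, since the variations must preserve the container $\Omega$. With the interior and spine contributions already eliminated, testing against $w\in T\Sigma$ supported near a single intersection curve $\partial M_{pj}\cap\Sigma$ yields $\int_{\partial M_{pj}\cap\Sigma}w\cdot\nu_{pj}=0$. The vector $\nu_{pj}$ lies in $TM_{pj}$ and is already perpendicular to that curve; only its component transverse to the curve but still tangent to $\Sigma$ can pair with $w\in T\Sigma$. Arbitrariness of $w$ forces this component to vanish, so $\nu_{pj}=\pm N_{\Sigma}$ along $\partial M_{pj}\cap\Sigma$. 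Since $N_{pj}\perp TM_{pj}$ while $\nu_{pj}\in TM_{pj}$, this is equivalent to $N_{pj}\cdot N_{\Sigma}=0$.

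The main obstacle is the sign bookkeeping underlying the interior equation used for (ii): one must align the orientations of the $N_{pj}$ with the outward unit normals $N_{\partial\Omega_{jk}}$ appearing in (\ref{eq:var-omega}) for every component of every phase, and then verify that the resulting Lagrange combination on any leaf separating phases $q$ and $r$ is indeed $\lambda_{q}-\lambda_{r}$ up to a global sign that is constant in $j$. Once this accounting is clean, items (i)--(iv) all follow by the same localization-of-$w$ argument already used in Proposition~\ref{prop:Young-constr}.
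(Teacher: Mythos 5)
Your proposal is correct and takes essentially the same route as the paper's proof: compute $\delta A^{\star}$ via (\ref{eq:1-24}) and (\ref{eq:var-omega}), group the result into leaf, spine, and wall integrals, and localize the test variation $w$ to each piece in turn to read off the Euler--Lagrange conditions, with (iv) following because admissible $w$ is tangent to $\Sigma$ and $\nu_{pj}$ is already perpendicular to the contact curve. The only cosmetic difference is that you keep all three (redundant) multipliers for symmetry, whereas the paper works with the concrete two-junction configuration of Figure \ref{fig:discon-d-tj} and two multipliers; the sign bookkeeping you flag is exactly what the paper's equations (\ref{eq:3.14}) resolve.
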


\begin{rem}
\label{rem:1st-var-gen}Equality (\ref{eq:mean-curv-id}) holds for
each triple junction,
\[
\sum_{p=1}^{3}\gamma_{pj}\kappa_{pj}=0
\]
However, in view of (\ref{eq:id-mean-curv}) and the fact that $\gamma_{pj}=\gamma_{p}$
for all $j$ (see correspondence table (\ref{eq:conn-not})\-), all
these equalities reduce to the single equality (\ref{eq:mean-curv-id}).
\end{rem}

\begin{proof}
For concreteness we consider the disconnected 3-phase partitioning
of Fig. \ref{fig:discon-d-tj} with the indicated orientation. Letting
$w$ be any variation of $T$, by (\ref{eq:example-func}) in view
of (\ref{eq:1-21}) and (\ref{eq:var-omega}) we obtain
\[
\begin{alignedat}{1}\delta A^{\star}(T) & =\sum_{j=1}^{2}\sum_{p=1}^{3}\gamma_{pj}\delta A(M_{pj})-\lambda_{2}\delta\left|\Omega_{2}\right|-\lambda_{3}\delta\left|\Omega_{3}\right|\\
 & =-\sum_{j=1}^{2}\sum_{p=1}^{3}\gamma_{pj}\int_{M_{pj}}H_{pj}\cdot w\\
 & +\sum_{j=1}^{2}\sum_{p=1}^{3}\gamma_{pj}\int_{S}\nu_{pj}\cdot w+\sum_{j=1}^{2}\sum_{p=1}^{3}\gamma_{pj}\int_{\partial M_{pj}\cap\Sigma}\nu_{pj}\cdot w\\
 & -\lambda_{2}\int_{M_{1}}N_{1}\cdot w-\lambda_{2}\int_{M_{31}}(-N_{31})\cdot w-\lambda_{2}\int_{M_{32}}(-N_{32})\cdot w\\
 & -\lambda_{3}\int_{M_{1}}(-N_{1})\cdot w-\lambda_{3}\int_{M_{21}}N_{21}\cdot w-\lambda_{3}\int_{M_{22}}N_{22}\cdot w
\end{alignedat}
\]
Rearranging gives
\begin{equation}
\begin{alignedat}{1}\delta A^{\star}(T) & =\int_{M_{1}}\left[\left(\lambda_{3}-\lambda_{2}\right)N_{1}-\gamma_{1}H_{1}\right]\cdot w\\
 & +\int_{M_{21}}\left(-\lambda_{3}N_{21}-\gamma_{21}H_{21}\right)\cdot w+\int_{M_{31}}\left(\lambda_{2}N_{31}-\gamma_{31}H_{31}\right)\cdot w\\
 & +\int_{M_{22}}\left(-\lambda_{3}N_{22}-\gamma_{22}H_{22}\right)\cdot w+\int_{M_{32}}\left(\lambda_{2}N_{32}-\gamma_{32}H_{32}\right)\cdot w\\
 & +\int_{S}\left(\sum_{j=1}^{2}\sum_{p=1}^{3}\gamma_{pj}\nu_{pj}\right)\cdot w+\sum_{j=1}^{2}\sum_{p=1}^{3}\gamma_{pj}\int_{\partial M_{pj}\cap\Sigma}\nu_{pj}\cdot w
\end{alignedat}
\label{eq:3.13}
\end{equation}
Using variations concentrated on each leaf gives
\begin{equation}
\begin{array}{l}
-\gamma_{1}H_{1}\cdot N_{1}+\lambda_{3}-\lambda_{2}=0\\
-\gamma_{21}H_{21}\cdot N_{21}-\lambda_{3}=0\\
-\gamma_{31}H_{31}\cdot N_{31}+\lambda_{2}=0\\
-\gamma_{22}H_{22}\cdot N_{22}-\lambda_{3}=0\\
-\gamma_{32}H_{32}\cdot N_{32}+\lambda_{2}=0
\end{array}\label{eq:3.14}
\end{equation}
By (\ref{eq:3.13}), using variations concentrated on each spine,
we obtain
\begin{equation}
\sum_{p=1}^{3}\gamma_{pj}\nu_{pj}=0,\quad j=1,2\label{eq:3.15}
\end{equation}
Variations concentrated on each $\partial M_{pj}\cap\Sigma$ give
\begin{equation}
\nu_{pj}\cdot w=0\label{eq:3.15-1}
\end{equation}
for $p,j$ such that $\partial M_{pj}\cap\Sigma\neq\emptyset$. From
these relations it follows without difficulty that $\nu_{pj}\in N\Sigma$
and this proves (iv).

Part (i) follows from equations (\ref{eq:3.15}) by the same argumentation
applied in the proof of Proposition \ref{prop:Young-constr}. By the
second and fourth of (\ref{eq:3.14}), on account of $\gamma_{21}=\gamma_{22}$
we obtain
\[
\kappa_{21}=\kappa_{22}
\]
and in a similar fashion from the third and fifth of (\ref{eq:3.14})
\[
\kappa_{31}=\kappa_{32}
\]
The equality $\kappa_{11}=\kappa_{12}$ is trivial, and this proves
(ii). The constancy of the $\kappa$'s follows immediately from (\ref{eq:3.14}).
Addition of the first three equations of (\ref{eq:3.14}) gives $\sum_{p=1}^{3}\gamma_{p1}\kappa_{p1}=0$
and addition of the first, fourth and fifth gives $\sum_{p=1}^{3}\gamma_{p2}\kappa_{p2}=0$.
By Remark \ref{rem:1st-var-gen} these are identical and this proves
(iii).
\end{proof}

\section{\label{sec:2nd-var-formulas}Second Variation Formulas}

Formula (\ref{eq:2-1}) for the second variation of area is quite
general but not directly applicable. Here we derive a more convenient
expression for hypersurfaces with constant mean curvature, which in
a subsequent step is applied to multijunction partitions. To satisfy
the condition of fixed container walls, following \cite{key-7}, we
define admissible variations by the solutions of the initial value
problem
\begin{equation}
\frac{d\xi}{dt}=w(\xi),\quad\xi(0)=x\label{eq:3-4}
\end{equation}
Letting $\xi_{x}$ be the solution of (\ref{eq:3-4}) for the initial
condition $\xi_{x}(0)=x$, we set $\xi(x,t)=\xi_{x}(t)$ for the corresponding
variation. For solid undeformable walls we choose $w$ so that $w(p)\in T_{p}\Sigma$
for any $p\in\Sigma=\partial\Omega$.

On taking the time derivative of (\ref{eq:3-4}) we obtain the following
expression for the second variation field:
\begin{equation}
a=D_{w}w\label{eq:A-13}
\end{equation}
\begin{prop}
\label{prop:3-1.3}Let $M$ be $C^{2}$-hypersurface of $\mathbb{R}^{n}$
with boundary. We assume that $M$ has constant mean curvature $\kappa$
and $w$ is a variation compactly supported in $M$ and whose support
is contained in a chart of $M$. Further let $N$ be the unit normal
field of $M$ in a chart containing the support of $w$, $\nu$ the
unit outward normal of $\partial M$ which is tangent to $M$, $u=w^{\top}$,
$v=w^{\bot}$, and $f=w\cdot N$. Then the second variation of the
area of $M$ is given by
\begin{equation}
\begin{alignedat}{1}\delta^{2}A(M)= & \int_{M}\left(|\mathrm{grad}_{M}f|^{2}-|B_{M}|^{2}f^{2}\right)+\\
 & \int_{M}\kappa\left[II_{M}(u,u)-2f\mathrm{div}_{M}u+\kappa f^{2}-a\cdot N\right]+\\
 & \int_{\partial M}\left[(u\cdot\nu)\mathrm{div}_{M}u-\left\langle \nabla_{u}u,\nu\right\rangle +2fII_{M}(u,\nu)+a\cdot\nu\right]
\end{alignedat}
\label{eq:3-19}
\end{equation}
and for variations satisfying (\ref{eq:3-4})
\begin{equation}
\begin{alignedat}{1}\delta^{2}A(M)= & \int_{M}\left(|\mathrm{grad}_{M}\,f\,|^{2}-|B_{M}|^{2}f^{2}\right)+\\
 & \int_{M}\kappa\left[II_{M}(u,u)-2f\mathrm{div}_{M}u+\kappa f^{2}-a\cdot N\right]+\\
 & \int_{\partial M}\left[(u\cdot\nu)\mathrm{div}_{M}u+fII_{M}(u,\nu)+f\left\langle D_{N}w,\nu\right\rangle \right]
\end{alignedat}
\label{eq:3-19-1}
\end{equation}
where
\[
|B_{M}|^{2}=g^{ik}g^{jl}B_{ij}B_{kl}=B_{j}^{i}B_{i}^{j},\quad B_{ij}=II_{M}(E_{i},E_{j}),
\]
and $\left(E_{i}(p)\right)_{i=1}^{n-1}$ is a local basis of $T_{p}M$.
\end{prop}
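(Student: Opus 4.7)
The plan is to start from the general second variation formula (\ref{eq:2-1}), decompose $w = u + fN$ with $u = w^{\top}$ and $f = w\cdot N$, expand every quadratic term, and then collapse the result by three classical identities: a divergence identity for tangent vector fields on $M$, the Gauss equation relating the Ricci tensor of $M\subset\mathbb{R}^{n}$ to its second fundamental form, and the Codazzi identity $B^{j}_{\,i\,|j} = (\mathrm{grad}_{M}\kappa)_{i}$, which vanishes under the constant mean curvature hypothesis. First I would compute $D_{E_{i}}w = \nabla_{E_{i}}u + II_{M}(u,E_{i})N + (E_{i}f)N + fD_{E_{i}}N$ and split it using $D_{E_{i}}N\in TM$ with $\langle D_{E_{i}}N, E_{j}\rangle = -B_{ij}$, yielding $(D_{E_{i}}w)^{\bot} = [II_{M}(u,E_{i}) + E_{i}f]N$ and $\mathrm{div}_{M}w = \mathrm{div}_{M}u - \kappa f$ (using $\mathrm{div}_{M}N = -\kappa$). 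Substituting into (\ref{eq:2-1}) and expanding, using the symmetry of $B$ to merge the two tangential cross terms, the integrand of $\delta^{2}A(M)$ takes the form
\[
\mathrm{div}_{M}a + (\mathrm{div}_{M}u - \kappa f)^{2} + |W(u)|^{2} + 2\,II_{M}(u,\mathrm{grad}_{M}f) + |\mathrm{grad}_{M}f|^{2} - u^{m}_{\,|i}u^{i}_{\,|m} + 2fB^{j}_{\,i}u^{i}_{\,|j} - |B_{M}|^{2}f^{2},
\]
where $|W(u)|^{2}:=g^{ij}II_{M}(u,E_{i})II_{M}(u,E_{j})$. The pieces $|\mathrm{grad}_{M}f|^{2}-|B_{M}|^{2}f^{2}$, $-2\kappa f\,\mathrm{div}_{M}u$ and $\kappa^{2}f^{2}$ of (\ref{eq:3-19}) are already in view.

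Next I would apply three identities to reduce what remains. The Bochner-type divergence identity
\[
(\mathrm{div}_{M}u)^{2} - u^{m}_{\,|i}u^{i}_{\,|m} = \mathrm{div}_{M}\!\bigl((\mathrm{div}_{M}u)u - \nabla_{u}u\bigr) + \mathrm{Ric}_{M}(u,u)
\]
extracts the boundary contribution $(u\cdot\nu)\mathrm{div}_{M}u - \langle\nabla_{u}u,\nu\rangle$ and a bulk $+\mathrm{Ric}_{M}(u,u)$. The Gauss equation for $M\subset\mathbb{R}^{n}$ gives $\mathrm{Ric}_{M}(u,u) = \kappa\,II_{M}(u,u) - |W(u)|^{2}$, so the $-|W(u)|^{2}$ cancels the $+|W(u)|^{2}$ from the normal squared expansion and leaves exactly $\kappa\,II_{M}(u,u)$ in the interior. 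For the cross terms I would introduce the tangent vector field $Z$ with $Z^{j} = B^{j}_{\,i}u^{i}$ (so that $\langle Z,Y\rangle = II_{M}(u,Y)$ for all $Y\in TM$); the product rule combined with Codazzi ($\mathrm{grad}_{M}\kappa = 0$) gives $\mathrm{div}_{M}(fZ) = fB^{j}_{\,i}u^{i}_{\,|j} + II_{M}(u,\mathrm{grad}_{M}f)$, hence
\[
2\,II_{M}(u,\mathrm{grad}_{M}f) + 2fB^{j}_{\,i}u^{i}_{\,|j} = 2\mathrm{div}_{M}(fZ),
\]
which integrates to the boundary term $2\int_{\partial M}fII_{M}(u,\nu)$. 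Finally $\mathrm{div}_{M}(hN) = -\kappa h$ yields $\int_{M}\mathrm{div}_{M}a = \int_{\partial M}a\cdot\nu - \int_{M}\kappa(a\cdot N)$, supplying the last pieces. Collecting produces (\ref{eq:3-19}).

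For (\ref{eq:3-19-1}) I would substitute the explicit acceleration $a = D_{w}w$ from (\ref{eq:A-13}). Writing $a = D_{u}w + fD_{N}w$ and using $\langle N,\nu\rangle = 0$, $\langle D_{u}u,\nu\rangle = \langle\nabla_{u}u,\nu\rangle$, and $\langle D_{u}N,\nu\rangle = -II_{M}(u,\nu)$, one obtains
\[
a\cdot\nu = \langle\nabla_{u}u,\nu\rangle - fII_{M}(u,\nu) + f\langle D_{N}w,\nu\rangle.
\]
Substituting this identity into the boundary integrand of (\ref{eq:3-19}) collapses $-\langle\nabla_{u}u,\nu\rangle + 2fII_{M}(u,\nu) + a\cdot\nu$ into $fII_{M}(u,\nu) + f\langle D_{N}w,\nu\rangle$, which is (\ref{eq:3-19-1}). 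The main obstacle is purely organizational bookkeeping: the three quadratic pieces of (\ref{eq:2-1}) couple tangential and normal derivatives of $w$ in a delicate way, and one has to recognize precisely where the constant mean curvature hypothesis enters — through Codazzi in the cross-term integration by parts — and how the Weingarten-norm terms contributed by the Ricci tensor and by the normal squared expansion conspire to produce the single $\kappa\,II_{M}(u,u)$ bulk term of the target formula. Without constant $\kappa$, a stray $2\int_{M}f\langle\mathrm{grad}_{M}\kappa, u\rangle$ contribution survives the cross-term step and cannot be absorbed.
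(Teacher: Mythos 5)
Your proposal is correct and follows essentially the same route as the paper: both start from (\ref{eq:2-1}), split $w=u+fN$, absorb the cross terms via the contracted Codazzi identity under the constant mean curvature hypothesis, convert the tangential block with the Ricci/Gauss (Bochner) identity, and pass from (\ref{eq:3-19}) to (\ref{eq:3-19-1}) by expanding $a\cdot\nu=\left\langle D_{w}w,\nu\right\rangle$. The only differences are presentational: you invoke the Bochner and Gauss identities by name where the paper derives them in index notation, and you compute the acceleration boundary contribution pointwise rather than as the integrated identity of Lemma \ref{lem:3-1.9}.
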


\begin{proof}
To perform the calculations in an orderly fashion we set
\[
I:=g^{ij}\left\langle (D_{E_{i}}w)^{\bot},(D_{E_{j}}w)^{\bot}\right\rangle 
\]
\[
J:=g^{ik}g^{jl}\left\langle (D_{E_{i}}w)^{\top},E_{j}\right\rangle \left\langle (D_{E_{l}}w)^{\top},E_{k}\right\rangle 
\]
\[
K:=\mathrm{div}_{M}a,\quad L:=(\mathrm{div}_{M}w)^{2}.
\]
Then, noticing that all these items are bilinear in $w$ (for $K$
this will become clear shortly), we name the terms resulting from
breaking $w$ into tangent and normal parts, by adding the indices
$n$ and $t$ denoting normal and tangent parts respectively. For
example, for $I$ we have $I=I_{tt}+I_{tn}+I_{nt}+I_{tt}$, where
$I_{tt}=g^{ij}\left\langle (D_{E_{i}}u)^{\bot},(D_{E_{j}}u)^{\bot}\right\rangle $,
$I_{tn}=g^{ij}\left\langle (D_{E_{i}}u)^{\bot},(D_{E_{j}}v)^{\bot}\right\rangle $
etc.

Using this notation, the generic formula for the second variation
of area (\ref{eq:2-1}) reads
\begin{equation}
\begin{alignedat}{1}\delta^{2}A(M) & =\int_{M}(K+L+I_{tt}+2I_{tn}+I_{nn}-J_{tt}-2J_{tn}-J_{nn})\\
 & =\int_{M}\left[(I_{nn}-J_{nn})+2(I_{tn}-J_{tn})+(L+I_{tt}-J_{tt})+K\right].
\end{alignedat}
\label{eq:A-14}
\end{equation}
Since calculations are often done more conveniently in coordinate
notation, while final results are more concisely expressed coordinate-free,
we give all items in both notations. We are using summation convention
throughout. From
\begin{equation}
D_{E_{i}}v=D_{E_{i}}(fN)=fD_{E_{i}}N+(D_{E_{i}}f)N\label{eq:A-15}
\end{equation}
recalling that $D_{E_{i}}N\in TM$ we obtain
\begin{equation}
(D_{E_{i}}v)^{\bot}=(D_{E_{i}}f)N=\frac{\partial f}{\partial q^{i}}N\label{eq:3-25}
\end{equation}
where $q^{1},q^{2},\cdots,q^{n-1}$ is a local coordinate system of
$M$. We are using the same notation for $f:M\to\mathbb{R}$ and $f\circ\mathbf{x}$,
$\mathbf{x}$ being a parametrization (inverse chart mapping) of $M$.
By (\ref{eq:3-25})
\begin{equation}
I_{nn}:=g^{ij}\left\langle (D_{E_{i}}v)^{\bot},(D_{E_{j}}v)^{\bot}\right\rangle =g^{ij}\frac{\partial f}{\partial q^{i}}\frac{\partial f}{\partial q^{j}}=\left|\mathrm{grad}_{M}f\right|^{2}\label{eq:3-25-1}
\end{equation}
Since $u\cdot N=0$ and $\dim N_{p}M=1$ $(p\in M)$,
\[
\begin{alignedat}{1}I_{tt} & =g^{ij}\left\langle (D_{E_{i}}u)^{\bot},(D_{E_{j}}u)^{\bot}\right\rangle =g^{ij}\left\langle (D_{E_{i}}u)^{\bot},N\right\rangle \left\langle (D_{E_{j}}u)^{\bot},N\right\rangle \\
 & =g^{ij}\left\langle D_{E_{i}}u,N\right\rangle \left\langle D_{E_{j}}u,N\right\rangle =g^{ij}\left\langle u,D_{E_{i}}N\right\rangle \left\langle u,D_{E_{j}}N\right\rangle 
\end{alignedat}
\]
By the definition of the Weingarten mapping,
\begin{equation}
I_{tt}=|Wu|^{2}=B_{ki}B_{\,j}^{k}u^{i}u^{j}.\label{eq:3-26}
\end{equation}
\begin{sloppypar}Here $u^{1},\cdots,u^{n-1}$ are the components
of $u$ in the coordinate system $q^{1},\cdots,q^{n-1}$, $B_{ij}=II_{M}(E_{i},E_{j})$
the components of the 2nd fundamental form and $B_{\,j}^{i}=g^{ik}B_{kj}$
the corresponding mixed contravariant-covariant components. In the
derivation of (\ref{eq:3-26}) the following identity was used\end{sloppypar}
\begin{equation}
D_{E_{i}}N=WE_{i}=-B_{\,i}^{k}E_{k}.\label{eq:3-21}
\end{equation}
By (\ref{eq:3-25}) and the definition of the Weingarten mapping we
obtain
\[
\begin{alignedat}{1}I_{tn} & =g^{ij}\left\langle (D_{E_{i}}u)^{\bot},(D_{E_{j}}v)^{\bot}\right\rangle =g^{ij}\left\langle D_{E_{i}}u,(D_{E_{j}}v)^{\bot}\right\rangle \\
 & =g^{ij}\frac{\partial f}{\partial q^{j}}\left\langle D_{E_{i}}u,N\right\rangle =-g^{ij}\frac{\partial f}{\partial q^{j}}\left\langle u,D_{E_{i}}N\right\rangle \\
 & =-\left\langle u,D_{\mathrm{grad}_{M}f}N\right\rangle =-\left\langle u,W\mathrm{grad}_{M}f\right\rangle 
\end{alignedat}
\]
By the self-adjointness of the Weingarten mapping and (\ref{eq:3-21})
we obtain the following alternative expressions for $I_{tn}$:
\begin{equation}
I_{tn}=-\left\langle Wu,\mathrm{grad}_{M}f\right\rangle =II_{M}(u,\mathrm{grad}_{M}f)=B_{\,i}^{j}u^{i}\frac{\partial f}{\partial q^{j}}.\label{eq:3-28}
\end{equation}
It is easily checked that $I_{tn}=I_{nt}$.

We proceed to the calculation of $J$-terms. Recalling the definition
of covariant derivative from Section \ref{sec:Notation-and-Preliminaries}
and its component notation $\nabla_{Y}X=X_{|k}^{i}Y^{k}E_{i}$, $X,Y\in TM$,
we have 
\[
J_{tt}=g^{ik}g^{jl}\left\langle (D_{E_{i}}u)^{\top},E_{j}\right\rangle \left\langle (D_{E_{l}}u)^{\top},E_{k}\right\rangle =u_{\,|i}^{k}u_{\,|k}^{i}
\]
On using the following notation for the double contraction of two
tensors $S,T$
\[
S:T=S_{\,j}^{i}T_{\,i}^{j}
\]
we obtain
\begin{equation}
J_{tt}=u_{\,|i}^{k}u_{\,|k}^{i}=\nabla u:\nabla u\label{eq:3-30-1}
\end{equation}
By (\ref{eq:A-15})
\begin{alignat*}{1}
J_{tn} & =g^{ik}g^{jl}\left\langle (D_{E_{i}}u)^{\top},E_{j}\right\rangle \left\langle (D_{E_{l}}v)^{\top},E_{k}\right\rangle \\
 & =g^{ik}g^{jl}\left\langle \nabla_{E_{i}}u,E_{j}\right\rangle \left\langle fD_{E_{l}}N,E_{k}\right\rangle \\
 & =-fg^{ik}g^{jl}\left\langle u_{\,|i}^{r}E_{r},E_{j}\right\rangle II_{M}(E_{l},E_{k})\\
 & =-fg^{ik}u_{\,|i}^{l}B_{lk}=-fu_{\,|i}^{l}B_{\,l}^{i}
\end{alignat*}
and
\begin{equation}
J_{tn}=-fu_{\,|i}^{l}B_{\,l}^{i}=-fII_{M}:\nabla u\label{eq:3-30-2}
\end{equation}
The symmetry $J_{tn}=J_{nt}$ follows immediately by interchanging
indices $i\leftrightarrow l$, $j\leftrightarrow k$. Again by (\ref{eq:A-15})
\begin{alignat*}{1}
J_{nn} & =g^{ik}g^{jl}\left\langle (D_{E_{i}}v)^{\top},E_{j}\right\rangle \left\langle (D_{E_{l}}v)^{\top},E_{k}\right\rangle \\
 & =f^{2}g^{ik}g^{jl}\left\langle D_{E_{i}}N,E_{j}\right\rangle \left\langle D_{E_{l}}N,E_{k}\right\rangle \\
 & =f^{2}g^{ik}g^{jl}\left\langle N,D_{E_{i}}E_{j}\right\rangle \left\langle N,D_{E_{l}}E_{k}\right\rangle =f^{2}B_{\,i}^{k}B_{\,k}^{i}
\end{alignat*}
and
\begin{equation}
J_{nn}=f^{2}B_{\,i}^{k}B_{\,k}^{i}=f^{2}|B_{M}|^{2}.\label{eq:3-30-3}
\end{equation}

By (see \cite{key-6})
\begin{equation}
\mathrm{div}_{M}w=\mathrm{div}_{M}u-H\cdot v\label{eq:A-16}
\end{equation}
since $M$ was supposed to have constant mean curvature $\kappa=H\cdot N$,
we obtain $\mathrm{div}_{M}w=\mathrm{div}_{M}u-\kappa f$ and from
this
\begin{equation}
(\mathrm{div}_{M}w)^{2}=(\mathrm{div}_{M}u)^{2}-2\kappa f\mathrm{div}_{M}u+\kappa^{2}f^{2}\label{eq:3-31}
\end{equation}

After this preparation we are ready to calculate the right side of
(\ref{eq:A-14}). By (\ref{eq:3-28}), (\ref{eq:3-30-2}) and the
properties of covariant derivative we obtain
\begin{alignat}{1}
I_{tn}-J_{tn} & =B_{\,i}^{j}u^{i}\frac{\partial f}{\partial q^{j}}+fu_{\,|j}^{i}B_{\,i}^{j}=B_{\,i}^{j}(fu^{i})_{|j}\nonumber \\
 & =(fB_{\,i}^{j}u^{i})_{|j}-fu^{i}B_{\,i|j}^{j}\label{eq:A-17}
\end{alignat}
We will prove that for a hypersurface with constant mean curvature
\begin{equation}
B_{\,i|j}^{j}=0.\label{eq:3-41}
\end{equation}
By the Mainardi-Codazzi equations in component form
\[
B_{jk|i}=B_{ik|j}
\]
and Ricci's lemma, we obtain
\[
B_{\,j|i}^{k}=B_{\,i|j}^{k}
\]
and from this by contraction over the indices $i$, $k$
\begin{equation}
B_{\,j|i}^{i}=B_{\,i|j}^{i}.\label{eq:3-45}
\end{equation}
The Mainardi-Codazzi equations in component form were obtained from
their component-free version (\cite{key-8}, vol. III, p. 10),
\[
(\nabla_{X}II)(Y,Z)=(\nabla_{Y}II)(X,Z)
\]
by setting $X=E_{i}$, $Y=E_{j}$, $Z=E_{k}$ and recalling that
\[
(\nabla_{E_{i}}II)(E_{j},E_{k})=B_{jk|i}.
\]
By the definition of mean curvature (\ref{eq:def-mean-curv}) and
equations (\ref{eq:def-H}) and (\ref{eq:def-mean-2nd-ff}),
\begin{equation}
\kappa=B_{\,i}^{i}.\label{eq:3-39-1}
\end{equation}
Since $\kappa$ is constant by hypothesis, application of (\ref{eq:3-45})
yields (\ref{eq:3-41}), and with this (\ref{eq:A-17}) reduces to
\begin{equation}
I_{tn}-J_{tn}=(fB_{\,i}^{j}u^{i})_{|j}=-\mathrm{div}_{M}(fWu)\label{eq:3-40}
\end{equation}

We calculate the third item under the integral sign on the right side
of (\ref{eq:A-14}). By (\ref{eq:3-31}) and (\ref{eq:3-30-1}) we
have
\begin{equation}
L-J_{tt}=(u_{\,i}^{i}u^{j}-u^{i}u_{\,|i}^{j})_{|j}+u^{i}(u_{\,|ij}^{j}-u_{\,ji}^{j})-2\kappa f\mathrm{div}_{M}u+\kappa^{2}f^{2}\label{eq:A-20}
\end{equation}
\begin{equation}
\begin{alignedat}{1}L-J_{tt} & =u_{\,|i}^{i}u_{\,|j}^{j}-u_{\,|j}^{i}u_{\,|i}^{j}-2\kappa fu_{\,|i}^{i}+\kappa^{2}f^{2}\\
 & =(u_{\,|i}^{i}u^{j}-u^{i}u_{\,|i}^{j})_{|j}-u^{j}u_{\,|ij}^{i}+u^{i}u_{\,|ij}^{j}-2\kappa fu_{\,|i}^{i}+\kappa^{2}f^{2}\\
 & =(u_{\,|i}^{i}u^{j}-u^{i}u_{\,|i}^{j})_{|j}+u^{i}(u_{\,|ij}^{j}-u_{\,|ji}^{j})-2\kappa fu_{\,|i}^{i}+\kappa^{2}f^{2}
\end{alignedat}
\label{eq:3-53}
\end{equation}
By Ricci's identity (see \cite{key-8}, vol. II, p. 224),
\[
u_{\,|ij}^{r}-u_{\,|ji}^{r}=R_{\,kji}^{r}u^{k}
\]
on contracting upon the indices $r,i$
\[
u_{\,|ij}^{i}-u_{\,|ji}^{i}=R_{\,kji}^{i}u^{k}=R_{kj}u^{k}
\]
and renaming indices, we obtain 
\begin{equation}
u_{\,|ij}^{j}-u_{\,|ji}^{j}=-R_{ki}u^{k}\label{eq:3-54}
\end{equation}
In these equalities
\[
R_{jk}=R_{\,jki}^{i}=g^{ri}R_{rjki}
\]
are the components of Ricci's tensor and
\[
R_{ijkl}=\left\langle R(E_{k},E_{l})E_{j},E_{i}\right\rangle 
\]
are the components of the curvature tensor (\cite{key-8}, vol. II,
pp. 190, 239). By Gauss' Theorema Egregium (\cite{key-8}, vol III,
p. 5, Theorem 6) we have
\[
\left\langle R(E_{k},E_{l})E_{j},E_{i}\right\rangle =B_{ik}B_{jl}-B_{il}B_{jk}
\]
hence
\begin{equation}
R_{jk}=g^{il}B_{ik}B_{jl}-B_{\,i}^{i}B_{jk}.\label{eq:3-46}
\end{equation}
Since $M$ has constant mean curvature, by (\ref{eq:3-39-1}) it follows
that
\begin{equation}
R_{jk}=g^{rs}B_{rj}B_{sk}-\kappa B_{jk}\label{eq:3-47}
\end{equation}
Using this equality in (\ref{eq:3-54}) gives
\begin{equation}
u^{i}(u_{\,|ij}^{j}-u_{\,|ji}^{j})=-g^{rs}B_{ri}B_{sk}u^{i}u^{k}+\kappa B_{ik}u^{i}u^{k}\label{eq:3-54-1}
\end{equation}
Combination of (\ref{eq:3-53}), (\ref{eq:3-54-1}) and (\ref{eq:3-26})
yields
\begin{equation}
\begin{alignedat}{1}L+I_{tt}-J_{tt} & =\mathrm{div}_{M}\left(u\mathrm{div}_{M}u-\nabla_{u}u\right)\\
 & +\kappa II_{M}(u,u)-2\kappa f\mathrm{div}_{M}u+\kappa^{2}f^{2}
\end{alignedat}
\label{eq:3-51}
\end{equation}

On using (\ref{eq:3-51}), (\ref{eq:3-40}), (\ref{eq:3-25-1}) and
(\ref{eq:3-30-3}) in (\ref{eq:A-14}) we obtain
\begin{alignat*}{1}
\delta^{2}A(M) & =\int_{M}\left(|\mathrm{grad}_{M}f|^{2}-|B_{M}|^{2}f^{2}\right)\\
 & -2\int_{M}\mathrm{div}_{M}(fWu)+\int_{M}\mathrm{div}_{M}\left(u\mathrm{div}_{M}u-\nabla_{u}u\right)\\
 & +\int_{M}\kappa\left[II_{M}(u,u)-2f\mathrm{div}_{M}u+\kappa f^{2}\right]+\int_{M}\left(\mathrm{div}_{M}a^{\top}-H\cdot a^{\bot}\right)
\end{alignat*}
Application of the divergence theorem gives
\begin{alignat*}{1}
\delta^{2}A(M) & =\int_{M}\left(|\mathrm{grad}_{M}f|^{2}-|B_{M}|^{2}f^{2}\right)\\
 & +\int_{M}\kappa\left[II_{M}(u,u)-2f\mathrm{div}_{M}u+\kappa f^{2}-a\cdot N\right]\\
 & -2\int_{\partial M}f\left\langle Wu,\nu\right\rangle +\int_{\partial M}\left[(u\cdot\nu)\mathrm{div}_{M}u-\left\langle \nabla_{u}u,\nu\right\rangle \right]+\int_{\partial M}a\cdot\nu
\end{alignat*}
From this, equality (\ref{eq:3-19}) follows immediately from
\begin{equation}
\left\langle Wu,\nu\right\rangle =-II_{M}(u,\nu).\label{eq:3-26-1}
\end{equation}
Formula (\ref{eq:3-19-1}) follows by Lemma \ref{lem:3-1.9} below.
\end{proof}
\begin{lem}
\label{lem:3-1.9}For a variation of type (\ref{eq:3-4}) the second
variation field $a$ satisfies the following equality
\begin{equation}
\int_{M}\mathrm{div}_{M}a^{\top}=\int_{\partial M}\left(f\left\langle D_{N}w,\nu\right\rangle -fII_{M}(u,\nu)+\left\langle \nabla_{u}u,\nu\right\rangle \right)\label{eq:3-33-1}
\end{equation}
\end{lem}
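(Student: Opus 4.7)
The plan is to convert the left-hand side to a boundary integral via the divergence theorem and then identify the resulting integrand with the right-hand side by a direct expansion of $a=D_{w}w$ coming from the variation equation (\ref{eq:3-4}).

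First, since $a^{\top}$ is a tangent vector field on $M$, the divergence theorem on a manifold with boundary gives
\[
\int_{M}\mathrm{div}_{M}a^{\top} = \int_{\partial M}\left\langle a^{\top},\nu\right\rangle = \int_{\partial M}\left\langle a,\nu\right\rangle,
\]
the last equality because $\nu\in TM$, so the normal component of $a$ contributes nothing. The task therefore reduces to establishing the pointwise identity
\[
\left\langle a,\nu\right\rangle = f\left\langle D_{N}w,\nu\right\rangle - fII_{M}(u,\nu) + \left\langle \nabla_{u}u,\nu\right\rangle
\]
along $\partial M$.

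Second, I would invoke (\ref{eq:A-13}) to write $a=D_{w}w$ and decompose $w=u+fN$. By bilinearity of the ambient directional derivative in its subscript argument,
\[
a = D_{u+fN}w = D_{u}w + fD_{N}w = D_{u}u + (D_{u}f)N + fD_{u}N + fD_{N}w.
\]
Taking the inner product with $\nu$, the term $(D_{u}f)N$ drops out since $\left\langle N,\nu\right\rangle =0$; the term $D_{u}u$ yields $\left\langle \nabla_{u}u,\nu\right\rangle $ because $(D_{u}u)^{\bot}$ is normal to $M$ and hence orthogonal to $\nu$; and $fD_{u}N=fWu$ yields $-fII_{M}(u,\nu)$ by the Weingarten identity (\ref{eq:3-26-1}). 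The final term is already in the required form. Summing the four contributions and integrating over $\partial M$ produces the claimed equality.

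The only point that requires care is the bilinear expansion of $D_{w}w$: the derivative is taken at each point $p$ in the direction $w(p)$, while the field being differentiated is the ambient extension of $w$ to the neighborhood $V$ on which the variation $\xi^{t}$ is defined. In particular $D_{N}w$ must be read as the ambient directional derivative of this vector field in the normal direction at a boundary point, not as anything intrinsic to $M$. Once this interpretation is fixed, the identity is purely pointwise algebra together with the Weingarten substitution; no further integration by parts or cancellation between non-obvious terms is needed.
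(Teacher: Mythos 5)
Your proof is correct and follows essentially the same route as the paper: apply the divergence theorem to $a^{\top}$, substitute $a=D_{w}w$ from (\ref{eq:A-13}), split the direction as $w=u+fN$, and identify the three boundary terms. The only cosmetic difference is that the paper handles the term $\left\langle D_{u}v,\nu\right\rangle$ by differentiating $\left\langle v,\nu\right\rangle =0$, whereas you expand $v=fN$ and invoke the Weingarten identity (\ref{eq:3-26-1}) directly; these are the same computation.
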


\begin{proof}
Application of the divergence theorem to (\ref{eq:3-4}) gives
\begin{equation}
\int_{M}\mathrm{div}_{M}a^{\top}=\int_{\partial M}\left\langle D_{w}w,\nu\right\rangle =\int_{\partial M}f\left\langle D_{N}w,\nu\right\rangle +\int_{\partial M}\left\langle D_{u}w,\nu\right\rangle .\label{eq:3-36}
\end{equation}
By the decomposition $w=u+v$ and
\begin{equation}
\left\langle D_{u}v,\nu\right\rangle =-\left\langle v,D_{u}\nu\right\rangle =-fII_{M}(u,\nu)\label{eq:3-39}
\end{equation}
we obtain (\ref{eq:3-33-1}).
\end{proof}
\begin{lem}
Let $T=(T_{j})_{j=1}^{r}$, $T_{j}=(M_{pj})_{p=1}^{3}$, be a three
phase partitioning of a domain $\Omega\subset\mathbb{R}^{n}$ by a
system of $r$ $C^{2}$-triple junctions into the domains $\Omega_{pj}$
$(p=1,\cdots,3;j=1,\cdots,r)$. Let $\tilde{\Omega}=\Omega_{pj}$
be any one of these domains. Then, for variations $w$ of type (\ref{eq:3-4})
preserving $\Sigma$, the second variation of volume of $\tilde{\Omega}$
is given by
\begin{equation}
\delta^{2}|\tilde{\Omega}|=\int_{\partial\tilde{\Omega}\backslash\Sigma}(w\cdot N_{\partial\tilde{\Omega}})\mathrm{div}_{\mathbb{R}^{n}}w\label{eq:A-19}
\end{equation}
where $N_{\partial\tilde{\Omega}}$ is the unit outward normal of
$\tilde{\Omega}$.
\end{lem}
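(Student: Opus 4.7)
The plan is to differentiate twice and exploit the fact that variations of type (\ref{eq:3-4}) are generated by an autonomous, time-independent velocity field $w$, so that Reynolds transport applies cleanly.

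First I would observe that, because the flow of $w$ carries $\partial\tilde\Omega^{t}$ with velocity $w$, equation (\ref{eq:var-omega}) holds at every time $t$, not only at $t=0$:
\[
\frac{d}{dt}|\tilde\Omega^{t}|=\int_{\partial\tilde\Omega^{t}} w\cdot N_{\partial\tilde\Omega^{t}}.
\]
Since $w$ is $C^{2}$ on $\mathbb{R}^{n}$ and $\tilde\Omega^{t}$ is a (piecewise-smooth) Lipschitz domain, the Euclidean divergence theorem converts this boundary integral into
\[
\frac{d}{dt}|\tilde\Omega^{t}|=\int_{\tilde\Omega^{t}}\mathrm{div}_{\mathbb{R}^{n}}w.
\]

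Next, because the integrand $\mathrm{div}_{\mathbb{R}^{n}}w$ carries no explicit $t$-dependence, a second application of the Reynolds transport theorem yields
\[
\frac{d^{2}}{dt^{2}}|\tilde\Omega^{t}|=\int_{\partial\tilde\Omega^{t}}(\mathrm{div}_{\mathbb{R}^{n}}w)\,(w\cdot N_{\partial\tilde\Omega^{t}}).
\]
Setting $t=0$ gives the displayed formula up to the domain of integration. To reduce to $\partial\tilde\Omega\setminus\Sigma$ I would use that admissible variations preserve $\Sigma$: at any point $p\in\partial\tilde\Omega\cap\Sigma$ the normal $N_{\partial\tilde\Omega}(p)$ is parallel to $N_{\Sigma}(p)$, while $w(p)\in T_{p}\Sigma$, so $w\cdot N_{\partial\tilde\Omega}=0$ and that portion of the boundary drops out.

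The only real technical nuisance, which I expect to be the main obstacle to a rigorous write-up rather than to the idea itself, is the regularity of $\partial\tilde\Omega$: it is only piecewise $C^{2}$, its singular set being contained in the union of the spines $S_{j}$ and the codimension-two curves $\partial M_{pj}\cap\Sigma$. One has to check that the divergence theorem and Reynolds transport remain valid in this Lipschitz setting (which they do, e.g.\ by a standard smoothing/approximation argument on the corners), and that the singular set has the right codimension to contribute no surface term in either identity. Once this is noted, the computation above is exactly two lines.
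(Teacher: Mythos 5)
Your argument is correct, but it follows a genuinely different route from the paper's. The paper pulls the volume back to the fixed domain, $|\tilde\Omega^{t}|=\int_{\tilde\Omega}J\xi^{t}$, differentiates the Jacobian determinant twice at $t=0$ to get $\mathrm{div}\,a+\partial_{\alpha}w^{\alpha}\partial_{\beta}w^{\beta}-\partial_{\beta}w^{\alpha}\partial_{\alpha}w^{\beta}$, recognizes the quadratic term as the divergence of $w\,\mathrm{div}\,w-D_{w}w$, cancels $D_{w}w$ against $a$ using (\ref{eq:A-13}), and then applies Gauss' theorem once, at $t=0$ only. You instead exploit the semigroup property of the autonomous flow to assert the first-variation identity at every time $t$, convert it to a bulk integral, and apply Reynolds transport a second time; your pointwise divergence identity is exactly the infinitesimal version of the paper's algebraic identity for the Jacobian, and both proofs invoke the hypothesis that the variation is of type (\ref{eq:3-4}) at the same essential point (you through autonomy of the flow, the paper through $a=D_{w}w$). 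What your route buys is brevity and a transparent mechanism; what it costs is that the key first step -- validity of (\ref{eq:var-omega}) for all $t$, not just $t=0$ -- silently uses $\xi^{t+s}=\xi^{s}\circ\xi^{t}$ and would fail for a general (non-flow) variation, whereas the paper's Jacobian computation is valid verbatim for arbitrary variations up to the final substitution. Your handling of the boundary reduction (dropping $\partial\tilde\Omega\cap\Sigma$ because $w\in T\Sigma$ there) and your remark on the piecewise-$C^{2}$ regularity of $\partial\tilde\Omega$ are both consistent with, and slightly more careful than, what the paper records.
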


\begin{proof}
If $(\xi^{t})_{t\in I}$ is a variation with first variation field
$w$, and $\tilde{\Omega}^{t}=\xi^{t}(\tilde{\Omega})$, we have
\[
|\tilde{\Omega}^{t}|=\int_{\xi^{t}(\tilde{\Omega})}dx=\int_{\tilde{\Omega}}J\xi^{t}(y)dy
\]
where $J\xi^{t}$ is the Jacobian of $\xi^{t}$. For the second variation
of this functional we have 
\[
\delta^{2}|\tilde{\Omega}|=\left.\frac{d^{2}}{dt^{2}}|\tilde{\Omega}^{t}|\right|_{t=0}=\int_{\tilde{\Omega}}\left.\frac{\partial^{2}}{\partial t^{2}}J\xi^{t}(y)\right|_{t=0}dy.
\]
Application of the rule of determinant differentiation and straight-forward
manipulations give
\[
\left.\frac{\partial^{2}}{\partial t^{2}}J\xi^{t}(y)\right|_{t=0}=\mathrm{div}_{\mathbb{R}^{n}}a+\frac{\partial w^{\alpha}}{\partial x^{\alpha}}\frac{\partial w^{\beta}}{\partial x^{\beta}}-\frac{\partial w^{\alpha}}{\partial x^{\beta}}\frac{\partial w^{\beta}}{\partial x^{\alpha}}.
\]
We are using Greek indices for vector components and coordinates in
the surrounding space $\mathbb{R}^{n}$ and Latin for the manifold
$M$. Summation convention applies to Greek indices as well. Formula
(\ref{eq:A-19}) follows from this equality, the identity
\begin{alignat*}{1}
\frac{\partial w^{\alpha}}{\partial x^{\alpha}}\frac{\partial w^{\beta}}{\partial x^{\beta}}-\frac{\partial w^{\alpha}}{\partial x^{\beta}}\frac{\partial w^{\beta}}{\partial x^{\alpha}} & =\frac{\partial}{\partial x^{\alpha}}\left(w^{\alpha}\frac{\partial w^{\beta}}{\partial x^{\beta}}-w^{\beta}\frac{\partial w^{\alpha}}{\partial x^{\beta}}\right)\\
 & =\mathrm{div}_{\mathbb{R}^{n}}\left(w\mathrm{div}_{\mathbb{R}^{n}}w-D_{w}w\right)
\end{alignat*}
and Gauss' theorem, in view of (\ref{eq:A-13}). The hypothesis that
the variation preserves $\Sigma$ is only used to drop the integral
over $\partial\tilde{\Omega}\cap\Sigma$. 
\end{proof}
On the basis of the above results, in the next theorem we develop
a formula for the second variation of area for minimal triple junction
partitions with volume constraints.
\begin{thm}
\label{thm:sec-var-TJ}Let $\Omega$ be a domain in $\mathbb{R}^{3}$,
$T=(T_{j})_{j=1}^{r}$, $T_{j}=(M_{pj})_{p=1}^{3}$, a minimal three-phase
partition of $\Omega$ by a set of $r$ $C^{2}$ triple junctions
with volume constraints, and $w$ an admissible variation satisfying
(\ref{eq:3-4}) and the volume constraints. On each leaf $M_{pj}$
we have the splitting $w=u_{pj}+v_{pj}$, $u_{pj}\in TM_{pj}$, $v_{pj}\in NM_{pj}$,
and we set $f_{pj}=w\cdot N_{pj}=v_{pj}\cdot N_{pj}$, $N_{pj}$ being
the unit normal field of $M_{pj}$. Then the following formula holds
for the second variation of the area functional,
\begin{equation}
\begin{gathered}\delta^{2}A^{\star}(T)=\sum_{p,j}\gamma_{pj}\int_{M_{pj}}\left(|\mathrm{grad}_{M_{pj}}f_{pj}|^{2}-|B_{M_{pj}}|^{2}f_{pj}^{2}\right)\\
-\sum_{j=1}^{r}\sum_{p=1}^{3}\gamma_{pj}\int_{\partial M_{pj}\cap\varSigma}f_{pj}^{2}II_{\varSigma}(N_{pj},N_{pj})\\
+\sum_{j=1}^{r}\sum_{p=1}^{3}\gamma_{pj}\int_{S_{j}}f_{pj}h_{pj}II_{M_{pj}}(\nu_{pj},\nu_{pj})
\end{gathered}
\label{eq:A-36}
\end{equation}
where $S_{j}$ is the spine of $T_{j}$ and $\nu_{pj}\in TM_{pj}$
is the unit normal field of $\partial M_{pj}\cap S_{j}$.
\end{thm}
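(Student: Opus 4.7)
The plan is to expand $\delta^{2}A^{\star}(T)=\sum_{p,j}\gamma_{pj}\,\delta^{2}A(M_{pj})-\sum_{j}\lambda_{j}\sum_{k}\delta^{2}|\Omega_{jk}|$ and apply Proposition \ref{prop:3-1.3} (in the form (\ref{eq:3-19-1}), tailored to variations of type (\ref{eq:3-4})) to each leaf $M_{pj}$. This is legitimate because, by Proposition \ref{prop:1st-var-gen}(ii)--(iii), each leaf carries constant scalar mean curvature $\kappa_{p}$. The contribution of each leaf then falls into three packages: (a) the Jacobi interior $\int_{M_{pj}}\left(|\mathrm{grad}_{M_{pj}}f_{pj}|^{2}-|B_{M_{pj}}|^{2}f_{pj}^{2}\right)$, which already matches the first line of (\ref{eq:A-36}); (b) a bulk $\kappa_{p}$-piece carrying the fields $u_{pj}$, $a$, and $\mathrm{div}_{M_{pj}}u_{pj}$; and (c) a boundary integral over $\partial M_{pj}=(S_{j}\cap\partial M_{pj})\cup(\Sigma\cap\partial M_{pj})$ involving $u_{pj}\cdot\nu_{pj}$, $II_{M_{pj}}(u_{pj},\nu_{pj})$, and $\left\langle D_{N_{pj}}w,\nu_{pj}\right\rangle$.

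Next I would process the Lagrange contribution via the preceding volume lemma. Using the decomposition $\mathrm{div}_{\mathbb{R}^{n}}w=\mathrm{div}_{M_{pj}}u_{pj}-\kappa_{p}f_{pj}+\left\langle D_{N_{pj}}w,N_{pj}\right\rangle$ on each face of $\partial\Omega_{jk}\setminus\Sigma$, each $\delta^{2}|\Omega_{jk}|$ decomposes into a sum of leaf integrals. The first-order relations established inside the proof of Proposition \ref{prop:1st-var-gen} express every $\lambda_{j}$ as the appropriate signed combination of $\gamma_{p}\kappa_{p}$, which is exactly what is needed to pair these volume boundary contributions with the bulk $\kappa_{p}$-piece (b). Using $a=D_{w}w$ from (\ref{eq:A-13}) together with an integration by parts in the spirit of Lemma \ref{lem:3-1.9}, I expect the entire bulk dependence on $u_{pj}$, $a$ and the residual $\kappa_{p}$ contributions to cancel, leaving only the Jacobi bulk term along with a residue of boundary integrals on $S_{j}$ and on $\Sigma\cap\partial M_{pj}$.

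The residual boundary pieces I would treat separately. On the spine $S_{j}$ the three leaves contribute simultaneously, and expanding the common $w$ in each per-leaf frame $\{T,\nu_{pj},N_{pj}\}$ (with $T$ the unit tangent to $S_{j}$) gives $w=(w\cdot T)T+h_{pj}\nu_{pj}+f_{pj}N_{pj}$. The tangential-component sums of the leaf boundary integrands then aggregate into expressions weighted by $\sum_{p}\gamma_{pj}\nu_{pj}$, which vanish by Young's equality (\ref{eq:3.10-1}); what survives, after invoking the Weingarten identity (\ref{eq:3-26-1}) and the symmetry of $II_{M_{pj}}$, reduces to exactly $\gamma_{pj}f_{pj}h_{pj}II_{M_{pj}}(\nu_{pj},\nu_{pj})$, which is the last line of (\ref{eq:A-36}). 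On $\Sigma\cap\partial M_{pj}$, Proposition \ref{prop:1st-var-gen}(iv) gives $N_{pj}\in T\Sigma$ while admissibility forces $w\in T\Sigma$ there; this lets me rewrite $\left\langle D_{N_{pj}}w,\nu_{pj}\right\rangle$ in terms of $II_{\Sigma}(N_{pj},N_{pj})$ and collapse the remaining $\Sigma$-boundary contribution to $-\gamma_{pj}f_{pj}^{2}II_{\Sigma}(N_{pj},N_{pj})$.

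The principal difficulty will be the bookkeeping in the second step: carefully tracking the orientation signs $N_{\partial\Omega_{jk}}=\pm N_{pj}$ for each leaf-domain pairing, and verifying that the relations from Proposition \ref{prop:1st-var-gen} produce exactly the coefficients required to annihilate every bulk term outside the three integrals in (\ref{eq:A-36}). The spine reduction is also delicate: although the vanishing of tangential contributions via Young's equality is conceptually clean, reassembling the survivors into the symmetric form $f_{pj}h_{pj}II_{M_{pj}}(\nu_{pj},\nu_{pj})$ requires several integration-by-parts steps and symmetry rewrites along $S_{j}$ that must be kept consistent across the three leaves.
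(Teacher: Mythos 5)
Your plan coincides with the paper's own proof: it expands $\delta^{2}A^{\star}$ leaf by leaf, converts the Lagrange terms via the volume second-variation lemma and the first-order relations $\lambda\leftrightarrow\gamma_{p}\kappa_{p}$, applies Proposition \ref{prop:3-1.3} with $a=D_{w}w$ and Lemma \ref{lem:3-1.9} to cancel the bulk $\kappa$-, $u$- and $a$-terms, and then splits the boundary into spine and $\Sigma$ parts, killing the tangential spine sums by Young's equality and rewriting the $\Sigma$-terms via $N_{pj},w\in T\Sigma$. The proposal is correct and follows essentially the same route; the remaining work is exactly the bookkeeping you identify.
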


\begin{proof}
The second variation of the area functional with Lagrange multipliers,
equation (\ref{eq:modif-area-func}), is given by
\[
\delta^{2}A^{\star}(M)=\delta^{2}A(M)-\sum_{j=2}^{3}\lambda_{j}\sum_{k=1}^{P_{j}}\delta^{2}\left|\Omega_{jk}\right|
\]
Again, for concreteness and to keep the length of formulas to a minimum,
we consider the disconnected three phase partitioning of Figure \ref{fig:discon-d-tj}
with the indicated orientation. For this configuration the area functional
is given by (\ref{eq:modif-area-func}). Use of (\ref{eq:A-19}) gives
\[
\begin{alignedat}{1}\delta^{2}A^{\star}(M) & =\sum_{j=1}^{2}\sum_{p=1}^{3}\gamma_{pj}\delta^{2}A(M_{pj})-\lambda_{2}\delta^{2}\left|\Omega_{2}\right|-\lambda_{3}\delta^{2}\left|\Omega_{3}\right|\\
 & =\sum_{j=1}^{2}\sum_{p=1}^{3}\gamma_{pj}\delta^{2}A(M_{pj})-(\lambda_{2}-\lambda_{3})\int_{M_{1}}w\cdot N_{1}\,\mathrm{div}_{\mathbb{R}^{3}}w\\
 & -\lambda_{2}\int_{M_{31}}w\cdot(-N_{31})\mathrm{div}_{\mathbb{R}^{3}}w-\lambda_{2}\int_{M_{32}}w\cdot(-N_{32})\mathrm{div}_{\mathbb{R}^{3}}w\\
 & -\lambda_{3}\int_{M_{21}}w\cdot N_{21}\,\mathrm{div}_{\mathbb{R}^{3}}w-\lambda_{3}\int_{M_{22}}w\cdot N_{22}\,\mathrm{div}_{\mathbb{R}^{3}}w
\end{alignedat}
\]
By equations (\ref{eq:3.14}) and $\kappa_{pj}=H_{pj}\cdot N_{pj}$
(see Proposition \ref{prop:1st-var-gen}(ii)\-) we obtain
\[
\begin{alignedat}{1}\delta^{2}A^{\star}(M) & =\sum_{j=1}^{2}\sum_{p=1}^{3}\gamma_{pj}\delta^{2}A(M_{pj})+\gamma_{1}\kappa_{1}\int_{M_{1}}w\cdot N_{1}\,\mathrm{div}_{\mathbb{R}^{3}}w\\
 & +\gamma_{31}\kappa_{31}\int_{M_{31}}w\cdot N_{31}\,\mathrm{div}_{\mathbb{R}^{3}}w+\gamma_{32}\kappa_{32}\int_{M_{32}}w\cdot N_{32}\,\mathrm{div}_{\mathbb{R}^{3}}w\\
 & +\gamma_{21}\kappa_{21}\int_{M_{21}}w\cdot N_{21}\,\mathrm{div}_{\mathbb{R}^{3}}w+\gamma_{22}\kappa_{22}\int_{M_{22}}w\cdot N_{22}\,\mathrm{div}_{\mathbb{R}^{3}}w
\end{alignedat}
\]
Each term of the double sum on the right side corresponds to one integral,
and thus we can express $\delta^{2}A^{\star}(M)$ as follows:
\[
\delta^{2}A^{\star}(M)=\sum_{j=1}^{2}\sum_{p=1}^{3}\gamma_{pj}\left[\delta^{2}A(M_{pj})+\kappa_{pj}\int_{M_{pj}}w\cdot N_{pj}\,\mathrm{div}_{\mathbb{R}^{3}}w\right]
\]

We set
\[
\delta^{2}A^{\star}(M_{pj})=\delta^{2}A(M_{pj})+\kappa_{pj}\int_{M_{pj}}w\cdot N_{pj}\,\mathrm{div}_{\mathbb{R}^{3}}w
\]
so that
\[
\delta^{2}A^{\star}(M)=\sum_{j}\sum_{p=1}^{3}\gamma_{pj}\delta^{2}A^{\star}(M_{pj}).
\]
Using formula (\ref{eq:3-19}) of Proposition \ref{prop:3-1.3} gives
for $\delta^{2}A^{\star}(M_{pj})$
\[
\begin{alignedat}{1}\delta^{2}A^{\star}(M_{pj}) & =\int_{M_{pj}}\left(|\mathrm{grad}_{M_{pj}}f|^{2}-|B_{M_{pj}}|^{2}f^{2}\right)\\
 & +\int_{M_{pj}}\kappa_{pj}\left[II_{M_{pj}}(u,u)-2f\mathrm{div}_{M_{pj}}u+\kappa_{pj}f^{2}-a\cdot N_{pj}\right]\\
 & +\int_{\partial M_{pj}}\left[(u\cdot\nu)\mathrm{div}_{M_{pj}}u-\left\langle \nabla_{u}u,\nu\right\rangle +2fII_{M_{pj}}(u,\nu)+a\cdot\nu\right]\\
 & +\kappa_{pj}\int_{M_{pj}}w\cdot N_{pj}\,\mathrm{div}_{\mathbb{R}^{3}}w
\end{alignedat}
\]
and reordering terms,
\begin{equation}
\begin{alignedat}{1}\delta^{2}A^{\star}(M_{pj}) & =\int_{M_{pj}}\left(|\mathrm{grad}_{M_{pj}}f|^{2}-|B_{M_{pj}}|^{2}f^{2}\right)\\
 & +\int_{M_{pj}}\kappa_{pj}\left[II_{M_{pj}}(u,u)-2f\mathrm{div}_{M_{pj}}u+\kappa_{pj}f^{2}\right]\\
 & +\int_{\partial M_{pj}}\left[(u\cdot\nu)\mathrm{div}_{M_{pj}}u-\left\langle \nabla_{u}u,\nu\right\rangle +2fII_{M_{pj}}(u,\nu)\right.\\
 & \left.\qquad\qquad+a\cdot\nu\right]\\
 & +\int_{M_{pj}}\kappa_{pj}\left[-\left\langle D_{w}w,N_{pj}\right\rangle +\left(w\cdot N_{pj}\right)\,\mathrm{div}_{\mathbb{R}^{3}}w\right]
\end{alignedat}
\label{eq:A-23}
\end{equation}
We treat the last two items under the integral sign in the last line.
For brevity we drop the indices $p,j$ on $N_{pj}$ and $M_{pj}$.
Substituting $w=u+v$, $v=fN$ and expanding, we obtain 
\[
\begin{alignedat}{1}\left\langle D_{w}w-(\mathrm{div}_{\mathbb{R}^{n}}w)w,N\right\rangle  & =\left\langle D_{u}u,N\right\rangle +\left\langle D_{u}v,N\right\rangle +\left\langle D_{v}u,N\right\rangle \\
 & +\left\langle D_{v}v,N\right\rangle -f\mathrm{div}_{\mathbb{R}^{n}}w\\
 & =II_{M}(u,u)+\left\langle fD_{u}N+(D_{u}f)N,N\right\rangle \\
 & +f\left\langle D_{N}u,N\right\rangle +f\left\langle D_{N}v,N\right\rangle \\
 & -f\mathrm{div}_{\mathbb{R}^{n}}v-f\mathrm{div}_{\mathbb{R}^{n}}u
\end{alignedat}
\]
By the identity
\[
\mathrm{div}_{\mathbb{R}^{n}}X=\mathrm{div}_{M}X+\left\langle D_{N}X,N\right\rangle 
\]
and dropping canceling terms we obtain
\begin{equation}
\left\langle D_{w}w-(\mathrm{div}_{\mathbb{R}^{n}}w)w,N\right\rangle =II_{M}(u,u)+\left\langle u,\mathrm{grad}_{M}f\right\rangle -f\mathrm{div}u+\kappa f^{2}\label{eq:A-24}
\end{equation}
Combination of (\ref{eq:A-23}) and (\ref{eq:A-24}) gives
\[
\begin{alignedat}{1}\delta^{2}A^{\star}(M_{pj}) & =\int_{M_{pj}}\left(|\mathrm{grad}_{M_{pj}}f|^{2}-|B_{M_{pj}}|^{2}f^{2}\right)\\
 & +\int_{\partial M_{pj}}\left\langle u\mathrm{div}u-\nabla_{u}u-2fWu-\kappa_{pj}fu+a,\nu\right\rangle 
\end{alignedat}
\]
Use of equation (\ref{eq:3-33-1}) and simplification gives
\[
\begin{alignedat}{1}\delta^{2}A^{\star}(M_{pj}) & =\int_{M_{pj}}\left(|\mathrm{grad}_{M_{pj}}f|^{2}-|B_{M_{pj}}|^{2}f^{2}\right)\\
 & +\int_{\partial M_{pj}}\left\langle u\mathrm{div}u-fWu-\kappa_{pj}fu+fD_{N_{pj}}w,\nu\right\rangle 
\end{alignedat}
\]

We break integrals over $\partial M$ (again for brevity we drop indices
on leaves and related quantities) into integrals over $\partial M\cap\varSigma$
and $\partial M\cap\Omega=S$, where $S$ is the spine of the triple
junction $M$ belongs to. Since by minimality $u\cdot\nu=0$ (see
equation (\ref{eq:3.15-1})\-), 
\begin{equation}
\begin{gathered}\int_{\partial M}\left[fII_{M}(u,\nu)+\left(\mathrm{div}u-\kappa f\right)\left\langle u,\nu\right\rangle +f\left\langle D_{N}w,\nu\right\rangle \right]=\\
\int_{S}\left[fII_{M}(u,\nu)+\left(\mathrm{div}u-\kappa f\right)\left\langle u,\nu\right\rangle +f\left\langle D_{N}w,\nu\right\rangle \right]\\
+\int_{\partial M\cap\varSigma}f\left[II_{M}(u,\nu)+\left\langle D_{N}w,\nu\right\rangle \right]
\end{gathered}
\label{eq:100}
\end{equation}
Let $\tau$ be a unit tangent field of $\partial M\cap\varSigma$.
The triple $\nu,\tau,N$ makes up an orthonormal frame along $\partial M\cap\varSigma$.
Since $u\cdot\nu=u\cdot N=0$ on $\partial M\cap\varSigma$, there
is a $C^{1}$ function $g:\partial M\cap\varSigma\to\mathbb{R}$ such
that $u=g\tau$ on $\partial M\cap\varSigma$, and as a consequence
of this and $w\cdot\nu=u\cdot\nu=0$, we have $w=fN+g\tau$ on $\partial M\cap\varSigma$.
Thus, for the boundary part of the above integral we have
\begin{equation}
\begin{gathered}\int_{\partial M\cap\varSigma}f\left[II_{M}(u,\nu)+\left\langle D_{N}w,\nu\right\rangle \right]=\\
\int_{\partial M\cap\varSigma}fII_{M}(u,\nu)-\int_{\partial M\cap\varSigma}f\left\langle w,D_{N}\nu\right\rangle =\\
\int_{\partial M\cap\varSigma}fgII_{M}(\tau,\nu)-\int_{\partial M\cap\varSigma}f^{2}\left\langle N,D_{N}\nu\right\rangle -\int_{\partial M\cap\varSigma}fg\left\langle \tau,D_{N}\nu\right\rangle 
\end{gathered}
\label{eq:101}
\end{equation}
Let $N_{\varSigma}$ be the \emph{inward pointing} unit normal field
of $\varSigma$. By
\[
II_{M}(\tau,\nu)=\left\langle D_{\tau}\nu,N\right\rangle =-\left\langle \nu,D_{\tau}N\right\rangle =\left\langle N_{\Sigma},D_{\tau}N\right\rangle =II_{\Sigma}(\tau,N),
\]
since $\tau,N\in T\Sigma$, and
\[
\left\langle \tau,D_{N}\nu\right\rangle =-\left\langle \tau,D_{N}N_{\varSigma}\right\rangle =\left\langle D_{N}\tau,N_{\varSigma}\right\rangle =II_{\varSigma}(\tau,N),
\]
the first and last term on the last row of (\ref{eq:101}) cancel
out, and we obtain
\[
\begin{gathered}\int_{\partial M\cap\varSigma}f\left[II_{M}(u,\nu)+\left\langle D_{N}w,\nu\right\rangle \right]=\int_{\partial M\cap\varSigma}f^{2}\left\langle D_{N}N,\nu\right\rangle \\
=-\int_{\partial M\cap\varSigma}f^{2}II_{\varSigma}(N,N).
\end{gathered}
\]
Multiplication by $\gamma_{pj}$ and summation over the possible values
of $p,j$ (i.e. over the leaves intersecting the boundary of $\Omega$)
gives the first term on the second row of (\ref{eq:A-36}).

Finally, we treat the integral over the spine $S$ in (\ref{eq:100}).
Recall that we are dropping leaf indices, and $\mathrm{div}u-\kappa f=\mathrm{div}_{M}w$.
We consider as previously the unit tangent field $\tau$ of $S$,
so that the triple $\nu,\tau,N$ makes up an orthonormal frame along
$S$. Again, there are $C^{1}$ functions $h,g:S\to\mathbb{R}$ such
that $u=h\nu+g\tau$ on $S$, and as a consequence of this, $w=fN+h\nu+g\tau$
on $S$. From the identities
\[
\mathrm{div}_{M}w=\mathrm{div}_{\mathbb{R}^{3}}w-\left\langle D_{N}w,N\right\rangle 
\]
\[
\mathrm{div}_{S}w=\mathrm{div}_{\mathbb{R}^{3}}w-\left\langle D_{N}w,N\right\rangle -\left\langle D_{\nu}w,\nu\right\rangle 
\]
we obtain
\[
\mathrm{div}_{M}w=\mathrm{div}_{S}w+\left\langle D_{\nu}w,\nu\right\rangle .
\]
Using this equality, the quantity under the integral sign over $S$
on the second row of (\ref{eq:100}) assumes the form
\[
\begin{aligned} & (u\cdot\nu)\mathrm{div}_{M}w+fII_{M}(u,\nu)+f\left\langle D_{N}w,\nu\right\rangle =\\
 & (w\cdot\nu)\mathrm{div}_{S}w+\left\langle D_{\nu}w,\nu\right\rangle h+fII_{M}(u,\nu)+\left\langle D_{v}w,\nu\right\rangle =\\
 & (w\cdot\nu)\mathrm{div}_{S}w+\left\langle D_{w}w,\nu\right\rangle -g\left\langle D_{\tau}w,\nu\right\rangle +fII_{M}(u,\nu)=\\
 & (w\cdot\nu)\mathrm{div}_{S}w+\left\langle D_{w}w,\nu\right\rangle -g\left\langle D_{\tau}w,\nu\right\rangle +fhII_{M}(\nu,\nu)+fgII_{M}(\tau,\nu)
\end{aligned}
\]
Multiplication by $\gamma_{pj}$ and summation over the leaves of
$T_{j}$, in view of (\ref{eq:3.10-1}) gives
\begin{equation}
\sum_{p=1}^{3}\gamma_{pj}f_{pj}h_{pj}II_{M_{pj}}(\nu_{pj},\nu_{pj})+g_{j}\sum_{p=1}^{3}\gamma_{pj}f_{pj}II_{M_{pj}}(\tau_{j},\nu_{pj})\label{eq:102}
\end{equation}
We will prove that the second term vanishes. On each leaf (dropping
indices) we have
\[
\begin{alignedat}{1}fII_{M}(\tau,\nu) & =-f\left\langle D_{\tau}N,\nu\right\rangle =-\left\langle D_{\tau}v,\nu\right\rangle \\
 & =-\left\langle D_{\tau}w,\nu\right\rangle +\left\langle D_{\tau}(h\nu+g\tau),\nu\right\rangle \\
 & =-\left\langle D_{\tau}w,\nu\right\rangle +g\left\langle D_{\tau}\tau,\nu\right\rangle +D_{\tau}h.
\end{alignedat}
\]
Multiplication by $\gamma_{pj}$ and summation over the leaves of
$T_{j}$, in view of (\ref{eq:3.10-1}) and
\[
\sum_{p=1}^{3}\gamma_{pj}h_{pj}=\sum_{p=1}^{3}\gamma_{pj}w\cdot\nu_{pj}=0
\]
gives
\[
\sum_{p=1}^{3}\gamma_{pj}f_{pj}II_{M_{pj}}(\tau_{j},\nu_{pj})=0.
\]
Summation of the surviving term $\sum_{p=1}^{3}\gamma_{pj}f_{pj}h_{pj}II_{M_{pj}}(\nu_{pj},\nu_{pj})$
in (\ref{eq:102}) over all triple junctions present in the system
gives the second term on the second row of equation (\ref{eq:A-36}),
and this completes the proof.
\end{proof}
\begin{rem}
The normal field of $\Sigma$, $N_{\Sigma}$, was chosen so that the
component $II_{\varSigma}(N_{pj},N_{pj})$ of the second fundamental
form is non-negative for convex $\Omega$.
\end{rem}

\begin{rem}
For two phase partitions and normal variations, (\ref{eq:A-36}) reduces
to the second variation formula in \cite{key-7}.
\end{rem}

For subsequent reference, we summarize the matching conditions on
the spine of minimal triple junctions. Let $T=(M_{i})_{i=1}^{3}$
be such a triple junction with spine $S=\partial M_{1}=\partial M_{2}=\partial M_{3}$.
For simplicity, we assume $\gamma_{1}=\gamma_{2}=\gamma_{3}=1$. By
the first variation, Proposition \ref{prop:1st-var-gen}, we have
(refer to Figure \ref{fig:2})
\begin{equation}
\nu_{1}+\nu_{2}+\nu_{3}=0\label{eq:3-73-1}
\end{equation}
\begin{equation}
N_{1}+N_{2}+N_{3}=0\label{eq:3-73-2}
\end{equation}
on $S$. For any vector $X\in T_{p}\mathbb{R}^{3}$, $p\in S$, its
projections $X_{i}=X\cdot\nu_{i}$ on the $\nu_{i}$ satisfy
\begin{equation}
X_{1}+X_{2}+X_{3}=0\label{eq:3-74}
\end{equation}
A similar equality is satisfied by the projections of $X$ on $N_{i}$.

\begin{figure}[h]
\includegraphics[scale=0.75]{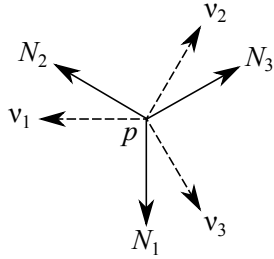}

\caption{\label{fig:2}The unit vector fields $N_{i}$, $\nu_{i}$ ($i=1,2,3)$
at $p\in S$. $S$ is the spine of the triple junction. $N_{i}$ is
normal to $M_{i}$ and $\nu_{i}\in TM_{i}$ is normal to $T_{p}S$
(cf. Figure \ref{fig:Intro-st-unst-TJs}, triple junction on the left).
The plane of the sheet is perpendicular to the tangent vector $\tau(p)\in T_{p}S$
(not shown).}

\end{figure}

For the first variation field $w$ we use the following notation:
\begin{equation}
\left.\begin{array}{l}
f_{i}=v_{i}\cdot N_{i}=w\cdot N_{i}\\
h_{i}=u_{i}\cdot\nu_{i}=w\cdot\nu_{i}\\
g_{i}=u_{i}\cdot\tau_{i}=w\cdot\tau_{i}
\end{array}\right\} \quad i=1,2,3\label{eq:3-75}
\end{equation}
where $\tau$ is the unit tangent field of the spine such that at
any point $p\in S$ the triple $(\tau,\nu_{i},N_{i})$ is positively
oriented for all $i=1,2,3$. By (\ref{eq:3-74}) we have
\begin{equation}
\begin{array}{l}
f_{1}+f_{2}+f_{3}=0\\
h_{1}+h_{2}+h_{3}=0\\
g_{1}=g_{2}=g_{3}=g
\end{array}\label{eq:3-76}
\end{equation}
From Figure \ref{fig:2} we obtain the following elementary geometric
relations:
\begin{equation}
\begin{array}{l}
N_{2}=-\frac{1}{2}N_{1}-\frac{\sqrt{3}}{2}\nu_{1}\\
\nu_{2}=+\frac{\sqrt{3}}{2}N_{1}-\frac{1}{2}\nu_{1}\\
N_{3}=-\frac{1}{2}N_{1}+\frac{\sqrt{3}}{2}\nu_{1}\\
\nu_{3}=-\frac{\sqrt{3}}{2}N_{1}-\frac{1}{2}\nu_{1}
\end{array}\label{eq:3-77}
\end{equation}
From these relations and (\ref{eq:3-75}) we obtain
\begin{equation}
\begin{array}{l}
f_{2}=-\frac{1}{2}f_{1}-\frac{\sqrt{3}}{2}h_{1}\\
h_{2}=+\frac{\sqrt{3}}{2}f_{1}-\frac{1}{2}h_{1}\\
f_{3}=-\frac{1}{2}f_{1}+\frac{\sqrt{3}}{2}h_{1}\\
h_{3}=-\frac{\sqrt{3}}{2}f_{1}-\frac{1}{2}h_{1}
\end{array}\label{eq:3-78}
\end{equation}
Equations (\ref{eq:3-77}), (\ref{eq:3-78}) express the \emph{matching
conditions on the spine}. When $\gamma_{1}\neq\gamma_{2}\neq\gamma_{3}\neq\gamma_{1}$
we have again linear dependences similar to (\ref{eq:3-77}), (\ref{eq:3-78}),
with coefficients depending on $\gamma_{1},\gamma_{2},\gamma_{3}$.
\begin{cor}
In the setting of Theorem \ref{thm:sec-var-TJ} and assuming that
$\gamma_{1}=\gamma_{2}=\gamma_{3}=1$, the expression for the second
variation of area (\ref{eq:A-36}) reduces to
\begin{equation}
\begin{gathered}\delta^{2}A^{\star}(T)=\sum_{p,j}\int_{M_{pj}}\left(|\mathrm{grad}_{M_{pj}}f_{pj}|^{2}-|B_{M_{pj}}|^{2}f_{pj}^{2}\right)\\
-\sum_{j=1}^{r}\sum_{p=1}^{3}\int_{\partial M_{pj}\cap\varSigma}f_{pj}^{2}II_{\varSigma}(N_{pj},N_{pj})\\
+\sum_{j=1}^{r}\int_{S_{j}}\left[\alpha_{j}\left(f_{1j}^{2}-h_{1j}^{2}\right)+2\beta_{j}f_{1j}h_{1j}\right]
\end{gathered}
\label{eq:103}
\end{equation}
The $\alpha_{j}$, $\beta_{j}$ are given by
\begin{equation}
\alpha_{j}=\frac{\sqrt{3}}{4}\left[II_{M_{2j}}(\nu,\nu)-II_{M_{3j}}(\nu,\nu)\right],\quad\beta_{j}=\frac{3}{4}II_{M_{1j}}(\nu,\nu)\label{eq:104}
\end{equation}
The fields $\nu$ correspond to the interface of the indicated second
fundamental form.
\end{cor}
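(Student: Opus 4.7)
The plan is to take formula (\ref{eq:A-36}) of Theorem \ref{thm:sec-var-TJ}, specialize to $\gamma_{pj}=1$, and rewrite, one junction at a time, the spine integrand $\sum_{p=1}^{3} f_{pj}h_{pj}II_{M_{pj}}(\nu_{pj},\nu_{pj})$ as a quadratic form in the two independent scalars $f_{1j},h_{1j}$ by means of the matching conditions on the spine. The first two lines of (\ref{eq:103}) are immediate from (\ref{eq:A-36}); all the work is in the third.

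Fix a junction $T_j$, suppress the index $j$, and write $\kappa_{p}^{\nu}:=II_{M_{p}}(\nu_{p},\nu_{p})$. Substituting the matching conditions (\ref{eq:3-78}) for $f_{2},h_{2},f_{3},h_{3}$ in terms of $f_{1},h_{1}$, a direct multiplication yields
\[
f_{2}h_{2}=-\tfrac{\sqrt{3}}{4}(f_{1}^{2}-h_{1}^{2})-\tfrac{1}{2}f_{1}h_{1},\qquad f_{3}h_{3}=+\tfrac{\sqrt{3}}{4}(f_{1}^{2}-h_{1}^{2})-\tfrac{1}{2}f_{1}h_{1},
\]
so that, collecting terms,
\[
\sum_{p=1}^{3}f_{p}h_{p}\kappa_{p}^{\nu}=\tfrac{\sqrt{3}}{4}(\kappa_{3}^{\nu}-\kappa_{2}^{\nu})(f_{1}^{2}-h_{1}^{2})+\bigl[\kappa_{1}^{\nu}-\tfrac{1}{2}(\kappa_{2}^{\nu}+\kappa_{3}^{\nu})\bigr]f_{1}h_{1}.
\]
Up to the orientation chosen for the pair $(\nu_{2},\nu_{3})$, the coefficient of $f_{1}^{2}-h_{1}^{2}$ is exactly $\alpha_{j}$ of (\ref{eq:104}).

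The only non-mechanical step is to identify the coefficient of $f_{1}h_{1}$ with $2\beta_{j}=\tfrac{3}{2}\kappa_{1}^{\nu}$; equivalently, to prove that
\[
\kappa_{1}^{\nu}+\kappa_{2}^{\nu}+\kappa_{3}^{\nu}=0 \quad \text{on } S_{j}.
\]
This is the main obstacle, and I would establish it as follows. Since the unit tangent $\tau$ of $S_{j}$ is tangent to each leaf $M_{p}$, the pair $\{\tau,\nu_{p}\}$ is an orthonormal basis of $TM_{p}$ along $S_{j}$ and hence $\kappa_{p}=II_{M_{p}}(\tau,\tau)+II_{M_{p}}(\nu_{p},\nu_{p})$. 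Summing in $p$ and using $II_{M_{p}}(\tau,\tau)=\langle D_{\tau}\tau,N_{p}\rangle$,
\[
\sum_{p=1}^{3}\kappa_{p}^{\nu}=\sum_{p=1}^{3}\kappa_{p}-\Bigl\langle D_{\tau}\tau,\sum_{p=1}^{3}N_{p}\Bigr\rangle,
\]
and both sums on the right vanish: the first by the Young relation $\sum_{p}\kappa_{p}=0$ from Proposition \ref{prop:1st-var-gen}(iii) (with $\gamma_{p}=1$), and the second by the spine identity (\ref{eq:3-73-2}). Thus $\sum_{p}\kappa_{p}^{\nu}=0$, as required.

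Substituting the resulting $\alpha_{j},\beta_{j}$ into the spine integrand and summing over the triple junctions $j=1,\dots,r$ produces the third line of (\ref{eq:103}) and completes the reduction.
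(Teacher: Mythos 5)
Your proposal is correct and follows essentially the same route as the paper's proof: expand the spine term $\sum_{p}f_{pj}h_{pj}II_{M_{pj}}(\nu_{pj},\nu_{pj})$ as a quadratic form in $f_{1j},h_{1j}$ via the matching conditions (\ref{eq:3-78}), read off $\alpha_{j}$ and $\beta_{j}$, and then establish $\sum_{p}II_{M_{pj}}(\nu_{pj},\nu_{pj})=0$ by the trace decomposition of the mean curvature along $\tau$ and $\nu_{p}$ combined with $\sum_{p}\kappa_{p}=0$ and $\sum_{p}N_{p}=0$. The only discrepancy is the sign of $\alpha_{j}$ --- you obtain $\frac{\sqrt{3}}{4}\left(II_{3}-II_{2}\right)$ where the paper states $\frac{\sqrt{3}}{4}\left(II_{2}-II_{3}\right)$ --- which traces to the paper's own intermediate display pairing $II_{2}$ with the product that equation (\ref{eq:3-78}) assigns to $f_{3}h_{3}$; as you note, this is merely a relabeling of leaves $2$ and $3$ and not a substantive gap.
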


\begin{rem}
Equation (\ref{eq:103}) can be based on phase 2 or 3 instead of phase
1. In this case equations (\ref{eq:104}) must be properly modified.
The utility of this expression lies in the independence of the involved
variation components.
\end{rem}

\begin{proof}
For brevity we write $II_{pj}$ in place of $II_{M_{pj}}(\nu_{pj},\nu_{pj})$.
Considering a particular triple junction $T_{j}$, and dropping the
corresponding index, i.e. we write $II_{p}$ for $II_{pj}$ and $f_{p}$
for $f_{pj}$, we calculate the sums $\sum_{p}f_{pj}h_{pj}II_{M_{pj}}(\nu,\nu)$
on the third row of (\ref{eq:A-36}). By the matching conditions (\ref{eq:3-78})
on the spine we have 
\[
\begin{alignedat}{1}\sum_{i=1}^{3}f_{i}h_{i}II_{i} & =II_{1}f_{1}+II_{2}\left(-\tfrac{1}{2}f_{1}+\tfrac{\sqrt{3}}{2}h_{1}\right)\left(-\tfrac{\sqrt{3}}{2}f_{1}-\tfrac{1}{2}h_{1}\right)\\
 & +II_{3}\left(-\tfrac{1}{2}f_{1}-\tfrac{\sqrt{3}}{2}h_{1}\right)\left(\tfrac{\sqrt{3}}{2}f_{1}-\tfrac{1}{2}h_{1}\right)
\end{alignedat}
\]
and performing operations we obtain
\[
\sum_{i=1}^{3}f_{i}h_{i}II_{i}=\tfrac{\sqrt{3}}{4}(II_{2}-II_{3})(f_{1}^{2}-h_{1}^{2})+\left(II_{1}-\tfrac{1}{2}II_{2}-\tfrac{1}{2}II_{3}\right)f_{1}h_{1}
\]
Setting $\alpha=\frac{\sqrt{3}}{4}(II_{2}-II_{3})$, $\beta=\frac{1}{4}\left(2II_{1}-II_{2}-II_{3}\right)$,
and summing over all triple junctions in the system we obtain (\ref{eq:103}).
There remains to be proved the second of (\ref{eq:104}).

By the definition of mean curvature vector (\ref{eq:def-H}), we have
on each leaf
\[
H\cdot N=\left\langle D_{\nu}\nu,N\right\rangle +\left\langle D_{\tau}\tau,N\right\rangle 
\]
By minimality $H\cdot N=\kappa=\textrm{const.}$, hence
\[
II(\nu,\nu)=\left\langle D_{\nu}\nu,N\right\rangle =\kappa-\left\langle D_{\tau}\tau,N\right\rangle .
\]
Summation over the leaves of a triple junction, in view of (\ref{eq:mean-curv-id})
and (\ref{eq:3-73-2}), gives
\begin{equation}
II_{1}+II_{2}+II_{3}=0.\label{eq:A-33}
\end{equation}
Using this equality in $\beta=\frac{1}{4}\left(2II_{1}-II_{2}-II_{3}\right)$
proves the second of (\ref{eq:104}), and with this the proof is complete.
\end{proof}

\section{\label{sec:App-TJ-R3}Application to Triple Junction Partitioning
Problems in $\mathbb{R}^{3}$}

We apply the formula of second variation of area to prove the instability
of certain disconnected three-phase triple junction partitioning problems,
and demonstrate that the method used for treating disconnectedness
in two-phase partitionings has limited applicability to triple junction
partitioning problems.

Following the method of two-phase partitioning \cite{key-7,key-32},
we consider variations with constant normal component on each leaf.
Variations normal to all leaves of a triple junction, other than the
trivial, are not possible, for in that case the variation field $w$
would be normal to three linearly independent vectors, viz. $\tau$,
the tangent of the spine, and the two tangent fields $\nu_{1}$, $\nu_{2}$
which are normal to the spine. Thus, non-trivial tangential variations
$u$ are inevitable, at least in a neighborhood of each spine. For
simplicity we take $\gamma_{1}=\gamma_{2}=\gamma_{3}=1$. We dropped
the second index on the $\gamma$'s, as they do not depend on the
spine (see (\ref{eq:conn-not})\-). Since the variation must preserve
the volume of the phases, for the triple junction system of Figure
\ref{fig:discon-d-tj} we have by (\ref{eq:var-omega}) with $w\cdot N=v\cdot N=f$
on each leaf,
\begin{equation}
f_{1}A_{1}-f_{31}A_{31}-f_{32}A_{32}=0\label{eq:A-38}
\end{equation}
\begin{equation}
-f_{1}A_{1}+f_{21}A_{21}+f_{22}A_{22}=0\label{eq:A-39}
\end{equation}
where $A_{pj}=|M_{pj}|>0$. By (\ref{eq:3-78}) we obtain
\begin{equation}
A_{21}\tilde{h}_{1}+A_{22}\tilde{h}_{2}=\tfrac{1}{\sqrt{3}}\left(2A_{1}+A_{21}+A_{22}\right)\tilde{f}\label{eq:A-43}
\end{equation}
\begin{equation}
A_{31}\tilde{h}_{1}+A_{32}\tilde{h}_{2}=-\tfrac{1}{\sqrt{3}}\left(2A_{1}+A_{31}+A_{32}\right)\tilde{f}.\label{eq:A-44}
\end{equation}
where $\tilde{h}_{j}=h_{1j}$ and $\tilde{f}=f_{1}$. Thus the last
term of (\ref{eq:103}), on setting $\tilde{f}=1$, becomes
\[
\left(1-\tilde{h}_{1}^{2}\right)\int_{S_{1}}\alpha+2\tilde{h}_{1}\int_{S_{1}}\beta+\left(1-\tilde{h}_{2}^{2}\right)\int_{S_{2}}\alpha+2\tilde{h}_{2}\int_{S_{2}}\beta
\]
For a stable partition, $\delta^{2}A^{\star}(T)>0$ for nontrivial
variations. For constant variations this condition is
\[
\begin{split}\left(1-\tilde{h}_{1}^{2}\right)\int_{S_{1}}\alpha+2\tilde{h}_{1}\int_{S_{1}}\beta+\left(1-\tilde{h}_{2}^{2}\right)\int_{S_{2}}\alpha+2\tilde{h}_{2}\int_{S_{2}}\beta>\\
\sum_{p,j}f_{pj}^{2}\int_{M_{pj}}|B_{M_{pj}}|^{2}+\sum_{p,j}f_{pj}^{2}\int_{\partial M_{pj}\cap\varSigma}II_{\varSigma}(N_{pj},N_{pj})
\end{split}
\]
For convex $\Omega$ the expression on the right side is non-negative,
and if we prove that the expression on the left side is non-positive,
i.e.
\[
\left(1-\tilde{h}_{1}^{2}\right)\int_{S_{1}}\alpha+2\tilde{h}_{1}\int_{S_{1}}\beta+\left(1-\tilde{h}_{2}^{2}\right)\int_{S_{2}}\alpha+2\tilde{h}_{2}\int_{S_{2}}\beta\leqslant0
\]
we get a contradiction, and this would prove that the partitioning
is not stable. From what we show in the sequel (see Section \ref{sec:Apps-2D})
it turns out that this condition does not hold in general, and thus
the methods of two-phase partitioning are in general not applicable
to triple junction partitioning problems.

However, this method can be used to prove instability in certain cases.
For example, in the disconnected partitioning of Figure \ref{fig:discon-d-tj},
assuming that $M_{1}$ is flat, i.e. $II_{1}=0$, we have $\beta=0$.
Further, we assume $A_{21}=A_{22}$, $A_{31}=A_{32}$ and $II_{2}\geqslant0$
for both spines. In this case the system (\ref{eq:A-43})-(\ref{eq:A-44})
has no solution, except when $f_{1}=0$, $\tilde{h}_{2}=-\tilde{h}_{1}$,
and then the above condition reduces to
\[
-\tilde{h}_{1}^{2}\int_{S_{1}}\alpha-\tilde{h}_{2}^{2}\int_{S_{2}}\alpha\leqslant0
\]
which is true by the hypothesis $II_{2}\geqslant0$. We have proved
the following Proposition.
\begin{prop}
\label{prop:Gen-instab-res}Let $\Omega$ be a convex domain in $\mathbb{R}^{3}$
and $T=(T_{j})_{j=1}^{2}$, $T_{j}=(M_{pj})_{p=1}^{3}$, a minimal
disconnected three-phase partition of $\Omega$ by a system of two
$C^{2}$ triple junctions as in Figure \ref{fig:discon-d-tj}, with
volume constraints. We assume that $\Sigma=\partial\Omega$ is $C^{2}$
in a neighborhood of $\Sigma\cap\overline{T}$. Further we assume
that $M_{1}$ is flat, the areas of the leaves $A_{pj}=|M_{pj}|$
satisfy the condition
\[
\left|\begin{array}{cc}
A_{21} & A_{22}\\
A_{31} & A_{32}
\end{array}\right|=0
\]
and $II_{2j}(\nu,\nu)\geqslant0$ for both spines $j=1,2$. Then the
disconnected triple junction partition $T$ is unstable.
\end{prop}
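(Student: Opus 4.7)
The plan is to apply the reduced formula (\ref{eq:103}) together with piecewise-constant normal variations, and to exhibit a non-trivial admissible variation along which $\delta^{2}A^{\star}(T)\le 0$, contradicting stability. Since $\Omega$ is convex so that $II_{\Sigma}\ge 0$, and since $|\mathrm{grad}_{M_{pj}}f_{pj}|^{2}=0$ when each $f_{pj}$ is constant on its leaf, the only terms on the first two rows of (\ref{eq:103}) are $-f_{pj}^{2}\int_{M_{pj}}|B_{M_{pj}}|^{2}$ and $-f_{pj}^{2}\int_{\partial M_{pj}\cap\Sigma}II_{\Sigma}(N_{pj},N_{pj})$, both of the correct (non-positive) sign. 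The whole argument therefore reduces to choosing constants $f_{pj}$ and spine values $\tilde{h}_{j}$ compatible with the matching conditions (\ref{eq:3-78}) and with the volume constraints (\ref{eq:A-38})--(\ref{eq:A-39}), and then showing that the spine contribution is non-positive as well.

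First, I would simplify the coefficients $\alpha_{j}$ and $\beta_{j}$ from (\ref{eq:104}). The flatness of $M_{1}$ yields $II_{M_{1j}}(\nu,\nu)=0$, so $\beta_{j}=0$ for $j=1,2$. Applying (\ref{eq:A-33}), namely $II_{1}+II_{2}+II_{3}=0$, on each spine gives $II_{M_{3j}}(\nu,\nu)=-II_{M_{2j}}(\nu,\nu)$, and consequently $\alpha_{j}=\tfrac{\sqrt{3}}{2}II_{M_{2j}}(\nu,\nu)$. The hypothesis $II_{2j}(\nu,\nu)\ge 0$ therefore guarantees $\int_{S_{j}}\alpha_{j}\ge 0$ for both spines.

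Next, I would exploit the degeneracy $A_{21}A_{32}-A_{22}A_{31}=0$ by writing $A_{22}=cA_{21}$ and $A_{32}=cA_{31}$ for some $c>0$. Dividing (\ref{eq:A-43}) by $A_{21}$ and (\ref{eq:A-44}) by $A_{31}$ produces two identities in the same quantity $\tilde{h}_{1}+c\tilde{h}_{2}$, whose right-hand sides are respectively a strictly positive and a strictly negative multiple of $\tilde{f}$. Hence $\tilde{f}=0$ is forced, together with $\tilde{h}_{1}+c\tilde{h}_{2}=0$; this is a one-parameter family of non-trivial admissible data. Setting $f_{1}\equiv 0$ on $M_{1}$ and propagating via (\ref{eq:3-78}) to constant values $f_{2j}=-\tfrac{\sqrt{3}}{2}\tilde{h}_{j}$, $f_{3j}=\tfrac{\sqrt{3}}{2}\tilde{h}_{j}$ on each leaf gives the candidate variation.

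Finally, I would substitute into (\ref{eq:103}). The gradient terms vanish; the bulk and $\Sigma$-boundary integrals enter with non-positive sign as already noted; and the spine contribution, using $\tilde{f}=0$ and $\beta_{j}=0$, collapses to $-\tilde{h}_{1}^{2}\int_{S_{1}}\alpha_{1}-\tilde{h}_{2}^{2}\int_{S_{2}}\alpha_{2}\le 0$. Summing yields $\delta^{2}A^{\star}(T)\le 0$ on a non-trivial admissible variation, contradicting stability. The main obstacle I expect is the admissibility step: one has to verify that piecewise-constant normal data $f_{pj}$, supplemented by tangential parts obeying the spine matching and the wall condition $w|_{\Sigma}\in T\Sigma$, actually arise as the velocity field of a genuine variation of type (\ref{eq:3-4}), rather than being only algebraically compatible along the spines. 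This is a construction to be carried out by extending the spine data to admissible ambient vector fields in the standard way, and it is the only place where one leaves the purely algebraic computation above.
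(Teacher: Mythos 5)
Your proposal is correct and follows essentially the same route as the paper: constant normal data on each leaf, the volume constraints combined with the spine matching conditions (\ref{eq:A-43})--(\ref{eq:A-44}) forcing $\tilde f=0$ and a one-parameter family of nontrivial $\tilde h_j$, flatness of $M_1$ giving $\beta_j=0$ and $\alpha_j=\tfrac{\sqrt3}{2}II_{2j}(\nu,\nu)\geqslant 0$, and the resulting non-positivity of $\delta^2A^\star$ contradicting stability. The only difference is cosmetic: you treat the full rank-one condition $A_{21}A_{32}=A_{22}A_{31}$ via the proportionality factor $c$, whereas the paper's text spells out only the case $A_{21}=A_{22}$, $A_{31}=A_{32}$ before stating the proposition in the general form you prove; your closing remark on realizing the piecewise-constant data as an actual admissible variation field is a legitimate point that the paper likewise leaves implicit.
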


\section[Spectral Analysis]{\label{sec:Spectr-Anal}Spectral Analysis of the 2nd Variation Form}

To keep the length of formulas to a minimum and focus on the essence
of the argument, we present the details for the configuration of Figure
\ref{fig:discon-d-tj}, and adopt the assumption $\gamma_{1}=\gamma_{2}=\gamma_{3}=1$
from this point on.

Let $T$ be a system of triple junctions of a three phase partitioning
problem in $\Omega$, which is assumed minimal, i.e. $\delta A(T)=0$.
When $T$ is stable, it is a local minimizer of the area functional,
so we aim at studying the conditions under which $T$ is stable. To
this purpose we will use a spectral analysis method for the bilinear
form expressing the second variation of area of $T$,
\begin{equation}
\begin{gathered}J(\mathbf{f})=\sum_{p,j}\int_{M_{pj}}\left(|\nabla^{M_{pj}}f_{pj}|^{2}-|B_{M_{pj}}|^{2}f_{pj}^{2}\right)-\sum_{p,j}\int_{\partial M_{pj}\cap\varSigma}\sigma_{pj}f_{pj}^{2}\\
+\sum_{j}\int_{S_{j}}\left[\alpha_{j}\left(f_{1j}^{2}-h_{1j}^{2}\right)+2\beta_{j}f_{1j}h_{1j}\right]
\end{gathered}
\label{eq:J}
\end{equation}
where $\sigma_{pj}=II_{\varSigma}(N_{pj},N_{pj})$ and $\mathbf{f}=(f_{1},f_{21},f_{31},f_{22},f_{32})$.
As $f_{11}\equiv f_{12}$ we write simply $f_{1}$. For brevity we
will write $\nabla^{M}f$ in place of $\mathrm{grad}_{M}\,f$. Although
$J$ and $\delta^{2}A^{\star}(T)$ are identical expressions, their
meaning is different: in the context of spectral analysis $T$ is
a \emph{fixed} system of manifolds with boundary and $J$ is a nonlinear
functional on a properly defined functional space on $T$ containing
the admissible variations of $T$. As a consequence the functions
of this space satisfy the conditions of volume constancy
\begin{equation}
\begin{cases}
-\int_{M_{1}}f_{1}+\int_{M_{21}}f_{21}+\int_{M_{22}}f_{22}=0\\
-\int_{M_{1}}f_{1}+\int_{M_{31}}f_{31}+\int_{M_{32}}f_{32}=0
\end{cases}\label{eq:vol-const}
\end{equation}
and the normalization condition
\begin{equation}
\sum_{p,j}\int_{M_{pj}}f_{pj}^{2}=\int_{M_{1}}f_{1}^{2}+\int_{M_{21}}f_{21}^{2}+\int_{M_{31}}f_{31}^{2}+\int_{M_{22}}f_{22}^{2}+\int_{M_{32}}f_{32}^{2}=1\label{eq:normal}
\end{equation}
in addition to the compatibility conditions on the spines (see first
of (\ref{eq:3-76})\-),
\begin{equation}
\begin{cases}
f_{1}+f_{21}+f_{31}=0 & \textrm{on }S_{1}\\
f_{1}+f_{22}+f_{32}=0 & \textrm{on }S_{2}
\end{cases}\label{eq:comp-cond-sp}
\end{equation}
For convenience, we introduce Lagrange multipliers $\lambda_{2},\lambda_{3}$,
and the corresponding functional
\begin{equation}
\begin{gathered}J^{\star}(\mathbf{f};\mu,\lambda_{2},\lambda_{3})=J(\mathbf{f})-\mu\left(\sum_{p,j}\int_{M_{pj}}f_{pj}^{2}-1\right)\\
-\lambda_{2}\left(-\int_{M_{1}}f_{1}+\int_{M_{21}}f_{21}+\int_{M_{22}}f_{22}\right)\\
-\lambda_{3}\left(-\int_{M_{1}}f_{1}+\int_{M_{31}}f_{31}+\int_{M_{32}}f_{32}\right)
\end{gathered}
\label{eq:J-1}
\end{equation}
and we are interested in the critical points of $J^{\star}$.
\begin{prop}
\label{prop:eigv-prob}A necessary and sufficient condition for a
$C^{2}$ function $\mathbf{f}=(f_{1},f_{21},f_{31},f_{22},f_{32})$
on $T$, which satisfies the compatibility conditions (\ref{eq:comp-cond-sp})
on the spines, to be a critical point of $J^{\star}$, or equivalently
of $J$ with the conditions (\ref{eq:vol-const}) and (\ref{eq:normal}),
is that it satisfies the following linear inhomogeneous system of
PDE
\begin{equation}
\Delta_{M_{pj}}f_{pj}+\left(\mu+|B_{M_{pj}}|^{2}\right)f_{pj}=-\tfrac{1}{2}\lambda_{pj}\label{eq:eigv-PDE}
\end{equation}
with Neumann-type boundary conditions:
\begin{equation}
D_{\nu_{pj}}f_{pj}=\sigma_{pj}f_{pj}\quad\textrm{on}\,\partial M_{pj}\cap\Sigma\label{eq:eigv-BC}
\end{equation}
\begin{equation}
\alpha_{j}f_{1j}+\beta_{j}h_{1j}=-D_{\nu_{1j}}f_{1j}+\tfrac{1}{2}\left(D_{\nu_{2j}}f_{2j}+D_{\nu_{3j}}f_{3j}\right)\label{eq:eigv-sp1}
\end{equation}
\begin{equation}
\beta_{j}f_{1j}-\alpha_{j}h_{1j}=-\tfrac{\sqrt{3}}{2}\left(D_{\nu_{2j}}f_{2j}-D_{\nu_{3j}}f_{3j}\right)\label{eq:eigv-sp2}
\end{equation}
\end{prop}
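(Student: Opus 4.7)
The plan is to recognize this as a standard Euler–Lagrange derivation for a constrained variational problem, where the non-trivial features are (i) the tangent space of admissible variations is constrained by the linear spine-compatibility conditions (\ref{eq:comp-cond-sp}), and (ii) the spine integrand in $J$ mixes the independent components $f_{1j}$ with the dependent quantity $h_{1j}=(f_{3j}-f_{2j})/\sqrt{3}$ obtained from (\ref{eq:3-78}). I would first compute the Gateaux derivative of $J^\star$ in the direction of an admissible perturbation $\boldsymbol\phi=(\phi_1,\phi_{21},\phi_{31},\phi_{22},\phi_{32})$ satisfying the linearized compatibility $\phi_1+\phi_{2j}+\phi_{3j}=0$ on $S_j$, and then integrate by parts leaf-wise using the identity
\[
\int_{M_{pj}} \nabla^{M_{pj}} f_{pj}\cdot\nabla^{M_{pj}}\phi_{pj} = -\int_{M_{pj}}\phi_{pj}\,\Delta_{M_{pj}}f_{pj} + \int_{\partial M_{pj}}\phi_{pj}\,D_{\nu_{pj}}f_{pj},
\]
splitting $\partial M_{pj}=(\partial M_{pj}\cap\Sigma)\cup S_j$ (with $\partial M_1=S_1\cup S_2\cup(\partial M_1\cap\Sigma)$).

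Next I would extract the three groups of equations by a localization argument. First, choosing $\boldsymbol\phi$ compactly supported in the interior of a single leaf $M_{pj}$ kills all boundary and spine terms and forces the bulk PDE (\ref{eq:eigv-PDE}); the constant $\lambda_{pj}$ on the right-hand side is the signed combination of $\lambda_2,\lambda_3$ that multiplies $\phi_{pj}$ in (\ref{eq:J-1}) (e.g.\ $\lambda_{21}=\lambda_2$, $\lambda_{31}=\lambda_3$, $\lambda_{11}=\lambda_{12}=-(\lambda_2+\lambda_3)$). Once the PDE is known, choosing $\boldsymbol\phi$ supported in a one-sided neighborhood of a piece of $\partial M_{pj}\cap\Sigma$ (and away from the spines) removes interior and spine contributions, leaving
\[
\int_{\partial M_{pj}\cap\Sigma}\phi_{pj}\bigl(D_{\nu_{pj}}f_{pj}-\sigma_{pj}f_{pj}\bigr)=0
\]
for arbitrary $\phi_{pj}$, which yields the Neumann-type boundary condition (\ref{eq:eigv-BC}).

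The main obstacle is the spine computation, where compatibility entangles the three leaves of a given junction. Here I would parametrize the admissible boundary values on $S_j$ by the two independent scalars $\delta f_{1j}=\phi_{1j}$ and $\delta h_{1j}=(\phi_{3j}-\phi_{2j})/\sqrt{3}$, so that $\phi_{2j}=-\tfrac12\delta f_{1j}-\tfrac{\sqrt3}{2}\delta h_{1j}$ and $\phi_{3j}=-\tfrac12\delta f_{1j}+\tfrac{\sqrt3}{2}\delta h_{1j}$ by (\ref{eq:3-78}), and since $h_{1j}=(f_{3j}-f_{2j})/\sqrt3$ the variation of the spine integrand becomes
\[
2\bigl[\alpha_j(f_{1j}\delta f_{1j}-h_{1j}\delta h_{1j})+\beta_j(\delta f_{1j}\,h_{1j}+f_{1j}\delta h_{1j})\bigr].
\]
After subtracting the bulk contribution already absorbed by the PDE and the $\Sigma$-boundary condition, the spine term reads
\[
\int_{S_j}\Bigl\{\bigl[\alpha_j f_{1j}+\beta_j h_{1j}+D_{\nu_{1j}}f_{1j}-\tfrac12(D_{\nu_{2j}}f_{2j}+D_{\nu_{3j}}f_{3j})\bigr]\delta f_{1j} + \bigl[-\alpha_j h_{1j}+\beta_j f_{1j}\mp\tfrac{\sqrt3}{2}(D_{\nu_{2j}}f_{2j}-D_{\nu_{3j}}f_{3j})\bigr]\delta h_{1j}\Bigr\},
\]
and because $\delta f_{1j}$ and $\delta h_{1j}$ are independent on $S_j$ the fundamental lemma of the calculus of variations forces both brackets to vanish, giving precisely (\ref{eq:eigv-sp1}) and (\ref{eq:eigv-sp2}). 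The converse direction is then immediate: given a $\mathbf f$ satisfying (\ref{eq:eigv-PDE})–(\ref{eq:eigv-sp2}), reversing the integration by parts shows that $\delta J^\star[\boldsymbol\phi]=0$ for every $\boldsymbol\phi$ satisfying the linearized compatibility conditions, which is exactly the criticality statement. The only delicate bookkeeping is matching signs and orientations of the $\nu_{pj}$ at the spines, consistent with the conventions fixed in Figure \ref{fig:2} and equations (\ref{eq:3-77}).
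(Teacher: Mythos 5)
Your proposal is correct and follows essentially the same route as the paper's proof: Gateaux derivative of $J^{\star}$, leaf-wise integration by parts via Green's formula, localization to obtain the PDE and the $\Sigma$-boundary condition, and finally reduction of the spine terms to the two independent perturbation components $\delta f_{1j}$ and $\delta h_{1j}$ (the paper's $\phi_{1j}$, $\psi_{1j}$) before applying the fundamental lemma. The sign ambiguity you flag in the second spine bracket is exactly the orientation bookkeeping the paper resolves through the conventions of Figure \ref{fig:2} and equations (\ref{eq:3-77})--(\ref{eq:3-78}); otherwise the arguments coincide.
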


\begin{rem}
In (\ref{eq:eigv-PDE}) $\Delta_{M}$ is the Laplace-Beltrami operator
on $M$ defined by
\[
\Delta_{M}f=\mathrm{div}_{M}(\nabla^{M}f)=g^{-1/2}(g^{1/2}g^{ij}f_{,j})_{,i}
\]
in a local coordinate system $q^{1},\cdots,q^{n-1}$, where $g=\det\left[g_{ij}\right]$,
$g_{ij}$ is the metric tensor and the comma operator denotes partial
derivative in the respective coordinate, i.e. $f_{,i}=\frac{\partial f}{\partial q^{i}}=D_{E_{i}}f$.
As $M$ is fixed, $g_{ij}$ is fixed and (\ref{eq:eigv-PDE}) is a
linear equation.
\end{rem}

\begin{rem}
The PDE's (\ref{eq:eigv-PDE}) and the boundary conditions (\ref{eq:eigv-BC})
are independent of the particular problem, provided that the $\lambda$'s
have been defined properly. The boundary conditions (\ref{eq:eigv-sp1})
and (\ref{eq:eigv-sp2}), as well as the definition of the $\lambda$'s
are problem specific. For the problem at hand the particular form
of the PDE's is
\begin{equation}
\begin{cases}
\Delta_{M_{21}}f_{21}+\left(\mu+|B_{M_{21}}|^{2}\right)f_{21}=-\tfrac{1}{2}\lambda_{2}\\
\Delta_{M_{21}}f_{21}+\left(\mu+|B_{M_{21}}|^{2}\right)f_{21}=-\tfrac{1}{2}\lambda_{2}\\
\Delta_{M_{22}}f_{22}+\left(\mu+|B_{M_{22}}|^{2}\right)f_{22}=-\tfrac{1}{2}\lambda_{3}\\
\Delta_{M_{22}}f_{22}+\left(\mu+|B_{M_{22}}|^{2}\right)f_{22}=-\tfrac{1}{2}\lambda_{3}\\
\Delta_{M_{1}}f_{1}+\left(\mu+|B_{M_{1}}|^{2}\right)f_{1}=\tfrac{1}{2}(\lambda_{2}+\lambda_{3})
\end{cases}\label{eq:eigv-PDE-exp}
\end{equation}
\end{rem}

\begin{proof}
The first variation of $J^{\star}$ is given by
\[
\begin{alignedat}{1}\delta J^{\star}(\mathbf{f})\boldsymbol{\phi} & =\left.\frac{d}{dt}J^{\star}(\mathbf{f}+t\boldsymbol{\phi})\right|_{t=0}\\
 & =2\sum_{p,j}\int_{M_{pj}}\left(\nabla^{M_{pj}}f_{pj}\cdot\nabla^{M_{pj}}\phi_{pj}-|B_{M_{pj}}|^{2}f_{pj}\phi_{pj}\right)\\
 & -2\sum_{p,j}\int_{\partial M_{pj}}\sigma_{pj}f_{pj}\phi_{pj}-2\mu\sum_{p,j}\int_{M_{pj}}f_{pj}\phi_{pj}\\
 & +2\sum_{j}\int_{S_{j}}\left[\alpha_{j}\left(f_{1j}\phi_{1j}-h_{1j}\psi_{1j}\right)+\beta_{j}\left(\phi_{1j}h_{1j}+f_{1j}\psi_{1j}\right)\right]\\
 & -\lambda_{2}\left(-\int_{M_{1}}\phi_{1}+\int_{M_{21}}\phi_{21}+\int_{M_{22}}\phi_{22}\right)\\
 & -\lambda_{3}\left(-\int_{M_{1}}\phi_{1}+\int_{M_{31}}\phi_{31}+\int_{M_{32}}\phi_{32}\right)
\end{alignedat}
\]
with
\[
\psi_{1j}=\delta h_{1j}=\delta\left[\tfrac{1}{\sqrt{3}}\left(f_{1j}+2f_{2j}\right)\right]=\tfrac{1}{\sqrt{3}}\left(\phi_{1j}+2\phi_{2j}\right)
\]
The second equality is obtained by the first of (\ref{eq:3-78}).
By Green's formula for manifolds, the term on the second row is written
in the form
\[
-2\sum_{p,j}\int_{M_{pj}}\phi_{pj}\left(\Delta_{M_{pj}}f_{pj}+|B_{M_{pj}}|^{2}f_{pj}\right)+2\sum_{p,j}\int_{\partial M_{pj}}\phi_{pj}D_{\nu_{pj}}f_{pj}
\]
and splitting the integrals over $\partial M_{pj}$ into boundary
and spine parts
\[
\int_{\partial M_{pj}}\phi_{pj}D_{\nu_{pj}}f_{pj}=\int_{\partial M_{pj}\cap\Sigma}\phi_{pj}D_{\nu_{pj}}f_{pj}+\int_{S_{j}}\phi_{pj}D_{\nu_{pj}}f_{pj}
\]
we obtain
\[
\begin{alignedat}{1}\delta J^{\star}(\mathbf{f})\boldsymbol{\phi} & =-2\sum_{p,j}\int_{M_{pj}}\phi_{pj}\left[\Delta_{M_{pj}}f_{pj}+\left(\mu+|B_{M_{pj}}|^{2}\right)f_{pj}\right]\\
 & +2\sum_{p,j}\int_{\partial M_{pj}\cap\Sigma}\phi_{pj}\left(D_{\nu_{pj}}f_{pj}-\sigma_{pj}f_{pj}\right)\\
 & +2\sum_{p,j}\int_{S_{j}}\phi_{pj}D_{\nu_{pj}}f_{pj}\\
 & +2\sum_{j}\int_{S_{j}}\left[\alpha_{j}\left(f_{1j}\phi_{1j}-h_{1j}\psi_{1j}\right)+\beta_{j}\left(\phi_{1j}h_{1j}+f_{1j}\psi_{1j}\right)\right]\\
 & -\sum_{p,j}\lambda_{pj}\int_{M_{pj}}\phi_{pj}
\end{alignedat}
\]
We have set $\lambda_{11}=-(\lambda_{2}+\lambda_{3})$, $\lambda_{21}=\lambda_{22}=\lambda_{2}$
and $\lambda_{31}=\lambda_{32}=\lambda_{3}$. Using compactly supported
variations vanishing on all $\partial M_{pj}\cap\Sigma$ and $S_{j}$
we obtain \textbf{(}\ref{eq:eigv-PDE}). Variations $\boldsymbol{\phi}$
concentrated on $\partial M_{pj}\cap\Sigma$ give (\ref{eq:eigv-BC}).
The remaining terms are integrals over spines:
\[
\begin{alignedat}{1}\delta J^{\star}(\mathbf{f})\boldsymbol{\phi} & =2\sum_{p,j}\int_{S_{j}}\phi_{pj}D_{\nu_{pj}}f_{pj}\\
 & +2\sum_{j}\int_{S_{j}}\left[\alpha_{j}\left(f_{1j}\phi_{1j}-h_{1j}\psi_{1j}\right)+\beta_{j}\left(\phi_{1j}h_{1j}+f_{1j}\psi_{1j}\right)\right]
\end{alignedat}
\]
Using the identities (\ref{eq:3-76}) to express the $\phi_{3j}$
in terms of independent quantities,
\[
\phi_{3j}=\delta f_{3j}=-\delta f_{1j}-\delta f_{2j}=-\phi_{1j}-\phi_{2j},
\]
expanding out and collecting similar terms,
\[
\begin{alignedat}{1}\delta J^{\star}(\mathbf{f}) & =2\sum_{j}\int_{S_{j}}\phi_{1j}D_{\nu_{1j}}f_{1j}+\phi_{2j}D_{\nu_{2j}}f_{2j}+\phi_{3j}D_{\nu_{3j}}f_{3j}\\
 & +2\sum_{j}\int_{S_{j}}\left[\alpha_{j}\left(f_{1j}\phi_{1j}-h_{1j}\psi_{1j}\right)+\beta_{j}\left(\phi_{1j}h_{1j}+f_{1j}\psi_{1j}\right)\right]\\
 & =2\sum_{j}\int_{S_{j}}\phi_{1j}\left(D_{\nu_{1j}}f_{1j}-D_{\nu_{3j}}f_{3j}\right)+\phi_{2j}\left(D_{\nu_{2j}}f_{2j}-D_{\nu_{3j}}f_{3j}\right)\\
 & +2\sum_{j}\int_{S_{j}}\phi_{1j}\left(\alpha_{j}f_{1j}+\beta_{j}h_{1j}\right)+2\sum_{j}\int_{S_{j}}\psi_{1j}\left(\beta_{j}f_{1j}-\alpha_{j}h_{1j}\right)
\end{alignedat}
\]
Substituting for $\phi_{2j}$ by $\phi_{2j}=\frac{\sqrt{3}}{2}\psi_{1j}-\frac{1}{2}\phi_{1j}$,
and performing operations gives
\[
\begin{alignedat}{1}\delta J^{\star}(\mathbf{f}) & =2\sum_{j}\int_{S_{j}}\phi_{1j}\left[D_{\nu_{1j}}f_{1j}-\tfrac{1}{2}D_{\nu_{2j}}f_{2j}-\tfrac{1}{2}D_{\nu_{3j}}f_{3j}+\alpha_{j}f_{1j}+\beta_{j}h_{1j}\right]\\
 & +2\sum_{j}\int_{S_{j}}\psi_{1j}\left(\tfrac{2}{\sqrt{3}}D_{\nu_{2j}}f_{2j}-\tfrac{2}{\sqrt{3}}D_{\nu_{3j}}f_{3j}+\beta_{j}f_{1j}-\alpha_{j}h_{1j}\right)
\end{alignedat}
\]
Using variations concentrated on each spine we obtain (\ref{eq:eigv-sp1})
and (\ref{eq:eigv-sp2}). The converse is immediate.
\end{proof}
In the following we present two propositions that are necessary for
the study of stability of triple junction partitionings. The next
proposition states that a partitioning problem is unstable if there
is an eigenvalue $\mu<0$ of the system (\ref{eq:eigv-PDE})-(\ref{eq:eigv-sp2}).
\begin{prop}
\label{prop:Categ-a}Let $T$ be a system of $C^{2}$ triple junctions
of a minimal three phase partitioning problem in $\Omega$, and $\mathbf{f}$
an eigenfunction of problem (\ref{eq:eigv-PDE})-(\ref{eq:eigv-sp2})
with corresponding eigenvalue $\mu$. Then
\begin{equation}
J(\mathbf{f})=\mu.\label{eq:J-min}
\end{equation}
In particular, if $\mu<0$, $T$ is unstable.
\end{prop}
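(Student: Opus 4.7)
The plan is to exploit the split of the Lagrangian $J^{\star}$ in (\ref{eq:J-1}) into a quadratic part (the form $J$ itself together with the normalization constraint) and a linear part (the two volume constraints), and then to test the Euler--Lagrange equation of $J^{\star}$ against $\mathbf{f}$ itself. Since by Proposition \ref{prop:eigv-prob} an eigenfunction $\mathbf{f}$ is precisely a critical point of $J^{\star}$ with the attached multipliers $\mu,\lambda_{2},\lambda_{3}$, we have $\delta J^{\star}(\mathbf{f})\boldsymbol{\phi}=0$ for every direction $\boldsymbol{\phi}$ satisfying the spine matching (\ref{eq:comp-cond-sp}). The crucial observation is that $\boldsymbol{\phi}=\mathbf{f}$ is itself an admissible test direction, because the compatibility conditions are linear and hence closed under scalar multiplication.

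With this in place the identity $J(\mathbf{f})=\mu$ will drop out as a one-line application of Euler's identity for homogeneous forms. Indeed, since $J$ is quadratic in $\mathbf{f}$, its first variation along $\mathbf{f}$ is $2J(\mathbf{f})$; the normalization $\sum_{p,j}\int f_{pj}^{2}$ is quadratic and equals $1$, so its contribution is $2\mu$; and each of the volume functionals in (\ref{eq:vol-const}) is linear in $\mathbf{f}$ and vanishes on $\mathbf{f}$ by hypothesis, so the $\lambda_{2}$- and $\lambda_{3}$-terms contribute $0$. The relation $\delta J^{\star}(\mathbf{f})\mathbf{f}=0$ therefore reduces to $2J(\mathbf{f})-2\mu=0$, which is (\ref{eq:J-min}).

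For the instability conclusion, I would observe that on the volume-preserving constraint set the Lagrange contributions cancel, so $\delta^{2}A(T)[w]=\delta^{2}A^{\star}(T)[w]=J(\mathbf{f})$ for any admissible variation $w$ of type (\ref{eq:3-4}) whose normal components on the leaves reproduce the $f_{pj}$. One then realizes $\mathbf{f}$ as such a $w$: extend the $f_{pj}$ off each leaf into $\mathbb{R}^{3}$, match along the spines via (\ref{eq:3-78}), arrange tangency to $\Sigma$ at the container wall, and integrate the ODE (\ref{eq:3-4}) to obtain the flow. If $\mu<0$ then $\delta^{2}A(T)[w]=\mu<0$, which contradicts stability of $T$. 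The genuinely delicate step in this program is not the algebraic collapse $J(\mathbf{f})=\mu$, which is essentially automatic, but the lifting of the scalar data $\mathbf{f}$ to a bona fide admissible variation field respecting both the spine matching and the wall tangency; fortunately the linear matching (\ref{eq:3-78}) together with the normalization of (\ref{eq:eigv-BC}) make this a routine extension argument.
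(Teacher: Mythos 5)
Your derivation of $J(\mathbf{f})=\mu$ is correct, and it reaches the identity by a somewhat different route than the paper. The paper works with the strong form: it multiplies the PDE (\ref{eq:eigv-PDE}) by $f_{pj}$, integrates, applies Green's formula, splits the boundary integrals into wall and spine parts, uses (\ref{eq:eigv-BC}) on the wall, and then verifies by a direct computation with the matching conditions (\ref{eq:3-78}) and the spine conditions (\ref{eq:eigv-sp1})--(\ref{eq:eigv-sp2}) that $\sum_{p}f_{pj}D_{\nu_{pj}}f_{pj}$ reproduces the spine term $\alpha_{j}(f_{1j}^{2}-h_{1j}^{2})+2\beta_{j}f_{1j}h_{1j}$ of $J$. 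You instead invoke the ``sufficient'' direction of Proposition \ref{prop:eigv-prob} to pass to the weak statement $\delta J^{\star}(\mathbf{f})\boldsymbol{\phi}=0$, test with $\boldsymbol{\phi}=\mathbf{f}$ (admissible, as you note, since the spine compatibility conditions are linear and homogeneous), and conclude by Euler's identity for the quadratic part together with the vanishing of the linear volume functionals on $\mathbf{f}$ and the normalization $|\mathbf{f}|_{L^{2}(T)}^{2}=1$. This is the same integration by parts, but performed once and for all inside Proposition \ref{prop:eigv-prob} rather than repeated; it buys brevity and makes transparent why the Lagrange terms drop out, at the cost of hiding the explicit spine identity that the paper's computation exhibits. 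The one point to make explicit is that $h_{1j}$ must be treated as the linear function $\tfrac{1}{\sqrt{3}}\left(f_{1j}+2f_{2j}\right)$ of the unknowns, so that $J$ is genuinely a homogeneous quadratic form on the space of admissible $\mathbf{f}$ and $\delta J(\mathbf{f})\mathbf{f}=2J(\mathbf{f})$ holds; this is exactly how the paper varies $h_{1j}$ (via $\psi_{1j}$) in the proof of Proposition \ref{prop:eigv-prob}, so your step is justified.

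On the instability conclusion your discussion is, if anything, more careful than the paper's ``follows immediately'': the lifting of the scalar data $\mathbf{f}$ to an admissible variation field $w$ with $w\cdot N_{pj}=f_{pj}$, consistent on the spines and tangent to $\Sigma$, is indeed the step that needs an argument, and your sketch is the right one. The only caveat is the assertion $\delta^{2}A(T)[w]=\delta^{2}A^{\star}(T)[w]$: a flow generated by (\ref{eq:3-4}) from a $w$ satisfying only the linearized constraints (\ref{eq:vol-const}) preserves the volumes to first order but not exactly, so $\delta^{2}|\Omega_{j}|$ need not vanish. One therefore either works with $A^{\star}$ throughout, as the paper does, or corrects the variation to an exactly volume-preserving family in the manner of \cite{key-7}; with either repair the conclusion that $\mu<0$ forces instability stands.
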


\begin{rem}
Proposition \ref{prop:Categ-a} implies that, with a negative eigenvalue
at hand, no lower bound is needed to be known in advance for the functional
$J$, in order to conclude that a minimal partitioning is unstable.
\end{rem}

\begin{proof}
Multiplication of (\ref{eq:eigv-PDE}) by $f_{pj}$, integration over
$M_{pj}$ and summation over all leaves of all triple junctions, gives
in view of (\ref{eq:vol-const}) and (\ref{eq:normal})
\[
\sum_{p,j}\int_{M_{pj}}\left(f_{pj}\Delta_{M_{pj}}f_{pj}+|B_{M_{pj}}|^{2}f_{pj}^{2}\right)+\mu=0
\]
Application of Green's formula gives
\[
\sum_{p,j}\int_{M_{pj}}\left(|\nabla^{M_{pj}}f_{pj}|^{2}-|B_{M_{pj}}|^{2}f_{pj}^{2}\right)-\sum_{p,j}\int_{\partial M_{pj}}f_{pj}D_{\nu_{pj}}f_{pj}=\mu
\]
On breaking the integrals over $\partial M_{pj}$ into boundary and
spine parts and using the boundary condition (\ref{eq:eigv-BC}),
we obtain
\[
\begin{gathered}\sum_{p,j}\int_{M_{pj}}\left(|\nabla^{M_{pj}}f_{pj}|^{2}-|B_{M_{pj}}|^{2}f_{pj}^{2}\right)-\sum_{p,j}\int_{\partial M_{pj}\cap\Sigma}\sigma_{pj}f_{pj}^{2}\\
-\sum_{p,j}\int_{S_{j}}f_{pj}D_{\nu_{pj}}f_{pj}=\mu
\end{gathered}
\]
Furthermore, on each spine we have
\[
\begin{gathered}\alpha_{j}\left(f_{1j}^{2}-h_{1j}^{2}\right)+2\beta_{j}f_{1j}h_{1j}=f_{1j}(\alpha_{j}f_{1j}+\beta_{j}h_{1j})+h_{1j}(\beta_{j}f_{1j}-\alpha_{j}h_{1j})=\\
f_{1j}\left[-D_{\nu_{1j}}f_{1j}+\tfrac{1}{2}\left(D_{\nu_{2j}}f_{2j}+D_{\nu_{3j}}f_{3j}\right)\right]-\tfrac{\sqrt{3}}{2}h_{1j}\left(D_{\nu_{2j}}f_{2j}-D_{\nu_{3j}}f_{3j}\right)=\\
\sum_{p}f_{pj}D_{\nu_{pj}}f_{pj}
\end{gathered}
\]
after using (\ref{eq:3-78}) and performing straight-forward operations.
By the definition of $J$ (\ref{eq:J}) we obtain $J(\mathbf{f})=\mu$.
The second assertion follows immediately from this.
\end{proof}
Proposition \ref{prop:Categ-a} can be used to prove instability.
The method we follow to establish the stability of a specific partitioning
problem, is to prove that the minimal eigenvalue of $J$, or equivalently
of the boundary value problem (\ref{eq:eigv-PDE})-(\ref{eq:eigv-sp2}),
is positive. We provide a justification of this method. The difficulty
is that the boundary integral $\int_{\partial M}f^{2}$ cannot be
bounded above by $\int_{M}f^{2}$. However, if $f\in W^{1,2}(M)\equiv H^{1}(M)$,
the boundary trace embedding theorem (\cite{key-103} §§ 5.34-5.37,
pp 163-166; \cite{key-104} § 8, pp 120-132) and a suitable interpolation
estimate (see Lemma \ref{lem:interp-est} below) yield the coercivity
of $J$. From this by a well-known theorem (see proof of Proposition
\ref{prop:Categ-b}) we obtain the existence of a minimal eigenvalue
for $J$.

The standard notation for Sobolev spaces is used: $|u|_{L^{2}(M)}=(\int_{M}u^{2})^{1/2}$,
$|u|_{H^{1}(M)}=(|u|_{L^{2}(M)}^{2}+|\nabla^{M}u|_{L^{2}(M)}^{2})^{1/2}$,
$|u|_{L^{2}(\partial M)}=(\int_{\partial M}u^{2})^{1/2}$ are the
standard norms of $L^{2}(M)$, $H^{1}(M)$ and $L^{2}(\partial M)$.
Let $T$ be a partitioning of $\Omega$ by a system of triple junctions.
Sobolev spaces on $T$ are defined as follows: 
\[
H^{1}(T)=\prod_{j=1}^{r}\prod_{p=1}^{3}H^{1}(M_{pj})
\]
It is understood that each distinct $M_{pj}$ participates in the
product only once. The $L^{2}$-norm is 
\[
|\mathbf{f}|_{L^{2}(T)}=\left(\sum_{p,j}|f_{pj}|_{L^{2}(M_{pj})}^{2}\right)^{1/2}
\]
and the $H^{1}(T)$-norm is defined analogously. Further, we define
the functionals
\[
\varphi_{p}(\mathbf{f})=\sum_{j=1}^{r}\sum_{q\neq p}\varepsilon_{pqj}\int_{M_{qj}}f_{qj}\quad(p,q=1,2,3)
\]
where
\[
\varepsilon_{pqj}=\begin{cases}
+1, & N_{qj}\:\textrm{outward}\:\textrm{to}\:\Omega_{pj}\\
-1, & N_{qj}\:\textrm{inward}\:\textrm{to}\:\Omega_{pj}
\end{cases}
\]
Clearly $\sum_{p}\varphi_{p}(\mathbf{f})=0$ and the volume constraints
can be expressed by means of these functionals.
\begin{example}
For the triple junction system of Figure \ref{fig:discon-d-tj} the
$\varphi$-functionals are
\[
\begin{alignedat}{1}\varphi_{1}(\mathbf{f})= & \int_{M_{21}}f_{21}+\int_{M_{22}}f_{22}-\int_{M_{31}}f_{31}-\int_{M_{32}}f_{32}\\
\varphi_{2}(\mathbf{f})= & -\int_{M_{1}}f_{1}+\int_{M_{31}}f_{31}+\int_{M_{32}}f_{32}\\
\varphi_{3}(\mathbf{f})= & -\int_{M_{1}}f_{1}+\int_{M_{21}}f_{21}+\int_{M_{22}}f_{22}
\end{alignedat}
\]
The volume constraints are given by $\varphi_{2}(\mathbf{f})=0$,
$\varphi_{3}(\mathbf{f})=0$.
\end{example}

The following estimate is easily established for $\mathbf{f}\in L^{2}(T)$:
\begin{equation}
|\varphi_{p}(\mathbf{f})|\leqslant c_{0}|\mathbf{f}|_{L^{2}(T)},\quad p=1,2,3.\label{eq:105}
\end{equation}
\begin{lem}
\label{lem:interp-est}Let $M$ be a bounded $C^{2}$ submanifold
of $\mathbb{R}^{n}$ with boundary. Then for every $\epsilon>0$ there
is a constant $c_{\epsilon}$ such that for any $u\in H^{1}(M)$
\begin{equation}
|u|_{L^{2}(\partial M)}\leqslant\epsilon|u|_{H^{1}(M)}+c_{\epsilon}|u|_{L^{2}(M)}\label{eq:BT-new}
\end{equation}
\end{lem}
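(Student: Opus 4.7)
The plan is to derive the estimate directly from the divergence theorem applied to a quadratic expression in $u$, rather than invoking the abstract trace embedding plus Ehrling's lemma. The starting observation is the following: if $X$ is any $C^{1}$ tangent vector field on $M$ whose restriction to $\partial M$ agrees with $\nu$, then for $u\in C^{1}(\overline{M})$,
\[
\int_{\partial M}u^{2}=\int_{\partial M}u^{2}\langle X,\nu\rangle=\int_{M}\mathrm{div}_{M}(u^{2}X)=\int_{M}\bigl(2u\langle\nabla^{M}u,X\rangle+u^{2}\mathrm{div}_{M}X\bigr).
\]
The general case $u\in H^{1}(M)$ follows at the end by density of $C^{\infty}(\overline{M})$ in $H^{1}(M)$.

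The first step is therefore to construct the auxiliary field $X$. Since $M$ is a bounded $C^{2}$ submanifold with boundary, $\partial M$ admits a collar neighborhood $U\subset M$ diffeomorphic to $\partial M\times[0,\delta_{0})$; on $U$ one sets $X$ equal to the inward geodesic extension of $\nu$, and glues it smoothly to zero outside $U$ via a cutoff $\chi$ supported in a slightly larger collar. This yields a $C^{1}$ tangent field on $M$ with $X|_{\partial M}=\nu$ and bounded quantities
\[
K_{1}:=\sup_{M}|X|<\infty,\qquad K_{2}:=\sup_{M}|\mathrm{div}_{M}X|<\infty.
\]
This construction is standard; the only care is that $M$ is $C^{2}$ so that $\nabla^{M}X$ is continuous.

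The second step is to estimate the right hand side. By Cauchy--Schwarz and Young's inequality with parameter $\delta>0$,
\[
\bigl|2u\langle\nabla^{M}u,X\rangle\bigr|\leqslant 2K_{1}|u||\nabla^{M}u|\leqslant\delta|\nabla^{M}u|^{2}+\frac{K_{1}^{2}}{\delta}u^{2},
\]
so that
\[
|u|_{L^{2}(\partial M)}^{2}\leqslant\delta|\nabla^{M}u|_{L^{2}(M)}^{2}+\Bigl(\frac{K_{1}^{2}}{\delta}+K_{2}\Bigr)|u|_{L^{2}(M)}^{2}.
\]
Given $\epsilon>0$, choose $\delta=\epsilon^{2}$, bound $|\nabla^{M}u|_{L^{2}(M)}^{2}\leqslant|u|_{H^{1}(M)}^{2}$, and use the elementary inequality $\sqrt{a+b}\leqslant\sqrt{a}+\sqrt{b}$ for $a,b\geqslant 0$ to obtain
\[
|u|_{L^{2}(\partial M)}\leqslant\epsilon|u|_{H^{1}(M)}+c_{\epsilon}|u|_{L^{2}(M)},\qquad c_{\epsilon}=\sqrt{\tfrac{K_{1}^{2}}{\epsilon^{2}}+K_{2}}.
\]

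The final step is the density extension: approximate $u\in H^{1}(M)$ by $u_{k}\in C^{\infty}(\overline{M})$ in the $H^{1}(M)$-norm, apply the inequality to each $u_{k}$, and pass to the limit using the continuity of the trace operator (which is guaranteed by $\partial M\in C^{2}$). The main technical obstacle is really only the construction of $X$ in step one; once that is in hand, the inequality itself is just a careful application of the divergence theorem and Young's inequality, and the constants $K_{1},K_{2}$ depend only on the geometry of $M$, not on $\epsilon$ or on $u$.
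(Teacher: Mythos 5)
Your proof is correct. Note that the paper itself does not prove this lemma at all: it simply writes ``For the proof see \cite{key-32}'', and the surrounding discussion (the reference to the boundary trace embedding theorem) suggests the intended route is the abstract one, namely compactness of the trace operator $H^{1}(M)\to L^{2}(\partial M)$ combined with an Ehrling-type argument. Your argument is genuinely different and, in my view, preferable in this setting: the divergence-theorem identity $\int_{\partial M}u^{2}=\int_{M}\mathrm{div}_{M}(u^{2}X)$ with a tangent field $X$ extending the conormal $\nu$ reduces everything to Young's inequality, avoids any compactness argument, and produces an explicit constant $c_{\epsilon}=\bigl(K_{1}^{2}\epsilon^{-2}+K_{2}\bigr)^{1/2}$ depending only on the geometry of the collar. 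Each step checks out: the product rule for $\mathrm{div}_{M}$ of a tangential field, the divergence theorem on a manifold with boundary (no curvature term arises since $X$ is tangential), and the choice $\delta=\epsilon^{2}$. Two small remarks. First, in the density step you do not actually need to invoke the trace theorem as an external fact: applying your inequality to $u_{k}-u_{j}$ shows the traces form a Cauchy sequence in $L^{2}(\partial M)$, so the trace operator and the inequality extend simultaneously, which keeps the proof self-contained. Second, the hypothesis ``bounded $C^{2}$ submanifold with boundary'' should be read as $\overline{M}$ compact (as it is for the leaves $M_{pj}$ in this paper), since otherwise the finiteness of $K_{1}$, $K_{2}$ and the existence of a uniform collar would need a separate justification.
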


For the proof see \cite{key-32}.

After this preparation we can prove the following result, which is
the basis of our method for establishing stability.
\begin{prop}
\label{prop:Categ-b}Let $T$ be a minimal three phase partitioning
of $\Omega\subset\mathbb{R}^{3}$ by a system of triple junctions.
Then for any $\mathbf{f}=(f_{pj})\in H^{1}(T)$ satisfying the compatibility
conditions on the spines and (\ref{eq:vol-const}), (\ref{eq:normal})
we have 
\begin{equation}
J(\mathbf{f})\geqslant\mu_{1}\label{eq:J-min-eigv}
\end{equation}
where $\mu_{1}$ is the smallest eigenvalue of problem (\ref{eq:eigv-PDE})-(\ref{eq:eigv-sp2}).
In particular, if $\mu_{1}>0$, $T$ is stable.
\end{prop}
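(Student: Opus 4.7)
The plan is to realize $\mu_{1}$ as the minimum of $J$ over a suitable admissible class, obtain the corresponding eigenfunction by the direct method, and deduce the stability statement by homogeneity. Let $\mathcal{A} \subset H^{1}(T)$ denote the admissible class, i.e.\ the $\mathbf{f}$ satisfying the spine compatibility conditions (\ref{eq:comp-cond-sp}), the volume constraints (\ref{eq:vol-const}), and the normalization (\ref{eq:normal}). Set $\mu_{1} := \inf_{\mathcal{A}} J$. If this infimum is attained at some $\mathbf{f}^{\star} \in \mathcal{A}$ which is an eigenfunction of (\ref{eq:eigv-PDE})--(\ref{eq:eigv-sp2}) with eigenvalue $\mu_{1}$, then (\ref{eq:J-min-eigv}) is immediate. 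The stability claim then follows because $J$ is $2$-homogeneous while the spine compatibility and volume constraints are linear: for any nontrivial admissible $\mathbf{f}$ the rescaling $\hat{\mathbf{f}} := \mathbf{f}/|\mathbf{f}|_{L^{2}(T)}$ lies in $\mathcal{A}$, so $J(\mathbf{f}) = |\mathbf{f}|_{L^{2}(T)}^{2} J(\hat{\mathbf{f}}) \geqslant \mu_{1}|\mathbf{f}|_{L^{2}(T)}^{2} > 0$ whenever $\mu_{1} > 0$.

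The principal obstacle is coercivity of $J$ on $\mathcal{A}$. The gradient term in (\ref{eq:J}) controls the $H^{1}$ seminorm, the bulk term $-\sum \int_{M_{pj}} |B_{M_{pj}}|^{2} f_{pj}^{2}$ is bounded by $C_{1} |\mathbf{f}|_{L^{2}(T)}^{2}$, but the boundary term on $\partial M_{pj} \cap \Sigma$ and the spine term --- which, upon eliminating $h_{1j}$ via (\ref{eq:3-78}), is a quadratic form in the traces of $f_{pj}$ on $S_{j}$ --- live on lower-dimensional sets and require more care. Applying Lemma \ref{lem:interp-est} with the boundary of $M_{pj}$ taken as $\partial M_{pj} \cap \Sigma$ together with the adjoining spines gives, for every $\eta > 0$,
\[
\int_{\partial M_{pj} \cap \Sigma} f_{pj}^{2} + \int_{\partial M_{pj} \cap S} f_{pj}^{2} \;\leqslant\; \eta \, |f_{pj}|_{H^{1}(M_{pj})}^{2} + c_{\eta} \, |f_{pj}|_{L^{2}(M_{pj})}^{2}.
\]
Choosing $\eta$ small enough to absorb the $|\nabla^{M_{pj}} f_{pj}|^{2}$ contributions into the gradient term of $J$ yields, for some constant $K$ depending only on $T$,
\[
J(\mathbf{f}) \;\geqslant\; \tfrac{1}{2} \sum_{p,j} \int_{M_{pj}} |\nabla^{M_{pj}} f_{pj}|^{2} - K \, |\mathbf{f}|_{L^{2}(T)}^{2},
\]
which under (\ref{eq:normal}) bounds $J$ below and makes $J$ coercive on $\mathcal{A}$.

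The direct method now applies. A minimizing sequence $(\mathbf{f}_{n}) \subset \mathcal{A}$ is bounded in $H^{1}(T)$; extracting a subsequence, $\mathbf{f}_{n} \rightharpoonup \mathbf{f}^{\star}$ in $H^{1}(T)$ and, by Rellich--Kondrachov together with the compact trace embedding, strongly in $L^{2}(T)$, in $L^{2}$ of each $\partial M_{pj} \cap \Sigma$, and in $L^{2}$ of each $S_{j}$. The strong $L^{2}$ convergence preserves the normalization; the linear trace constraints (\ref{eq:comp-cond-sp}) and the linear volume constraints (\ref{eq:vol-const}) are weakly closed; hence $\mathbf{f}^{\star} \in \mathcal{A}$. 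Weak lower semicontinuity of the Dirichlet part of $J$ and continuity of all remaining terms under the just-listed strong convergences give $J(\mathbf{f}^{\star}) \leqslant \liminf J(\mathbf{f}_{n}) = \mu_{1}$, so $\mathbf{f}^{\star}$ attains the infimum.

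As a constrained minimizer, $\mathbf{f}^{\star}$ satisfies the Euler--Lagrange system of Proposition \ref{prop:eigv-prob}, i.e.\ the eigenvalue problem (\ref{eq:eigv-PDE})--(\ref{eq:eigv-sp2}) with some Lagrange multipliers $\mu, \lambda_{2}, \lambda_{3}$. The integration by parts carried out in Proposition \ref{prop:Categ-a} --- testing (\ref{eq:eigv-PDE}) against $\mathbf{f}^{\star}$ and using (\ref{eq:normal}) and (\ref{eq:vol-const}) --- forces $\mu = J(\mathbf{f}^{\star}) = \mu_{1}$. Thus $\mu_{1}$ is the smallest eigenvalue of (\ref{eq:eigv-PDE})--(\ref{eq:eigv-sp2}) and $J(\mathbf{f}) \geqslant \mu_{1}$ for every admissible $\mathbf{f}$, proving (\ref{eq:J-min-eigv}); the stability assertion follows as indicated in the first paragraph.
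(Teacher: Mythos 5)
Your proposal is correct and follows essentially the same route as the paper: both establish coercivity of $J$ via the interpolation estimate of Lemma \ref{lem:interp-est} (after reducing the spine term to a quadratic form in the traces using (\ref{eq:3-78})), invoke the direct method on the weakly closed admissible set to obtain a minimizer, and identify the minimum value with the smallest eigenvalue through the Euler--Lagrange system of Proposition \ref{prop:eigv-prob} and the integration by parts of Proposition \ref{prop:Categ-a}. The only cosmetic difference is that the paper packages the existence step as an appeal to Theorem 1.2 of Struwe, whereas you spell out the minimizing-sequence argument and the homogeneity argument for the final stability assertion explicitly.
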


\begin{proof}
We will prove that the conditions of Theorem 1.2 in \cite{key-105}
are satisfied. Let 
\[
X=\{u\in H^{1}(T):|u|_{L^{2}(T)}\leqslant1,\,\sum_{p}u_{pj}=0\,\textrm{on }S_{j},\,\varphi_{2}(u)=\varphi_{3}(u)=0\}.
\]
By the continuity of the $L^{2}(T)$-norm, the functionals $\varphi_{p}:H^{1}(T)\to\mathbb{R}$,
and the mappings $u\mapsto\left.\sum_{p}u_{pj}\right|_{S_{j}}$, it
follows that $X$ is a closed subset of $H^{1}(T)$. The convexity
of $X$ is clear. Hence $X$ is a weakly closed subset of $H^{1}(T)$.
We prove the coercivity of $J$ on $X$. By continuity there are non-negative
constants $\sigma_{0}$, $b_{0}$, $\alpha_{0}$ such that
\[
\sigma_{pj}\leqslant\sigma_{0},\;|B_{M_{pj}}|^{2}\leqslant b_{0},\;|\alpha_{j}|\leqslant\tfrac{1}{2}\alpha_{0},\;|\beta_{j}|\leqslant\tfrac{1}{2}\alpha_{0}
\]
for all $p,j$. By the inequalities
\[
\alpha_{j}\left(f_{1j}^{2}-h_{1j}^{2}\right)\geqslant-\tfrac{1}{2}\alpha_{0}\left(f_{1j}^{2}+h_{1j}^{2}\right),\quad2\beta_{j}f_{1j}h_{1j}\geqslant-\alpha_{0}|f_{1j}h_{1j}|
\]
which are easily established, we obtain
\[
\alpha_{j}\left(f_{1j}^{2}-h_{1j}^{2}\right)+2\beta_{j}f_{1j}h_{1j}\geqslant-\alpha_{0}\left(f_{1j}^{2}+h_{1j}^{2}\right)
\]
and using the first of (\ref{eq:3-78}),
\[
\alpha_{j}\left(f_{1j}^{2}-h_{1j}^{2}\right)+2\beta_{j}f_{1j}h_{1j}\geqslant-2\alpha_{0}\left(f_{1j}^{2}+f_{2j}^{2}\right)
\]
Thus by (\ref{eq:J}), with possibly redefined values of $b_{0}$,
$\sigma_{0}$, we obtain the estimate
\[
J(\mathbf{f})\geqslant\sum_{p,j}|f_{pj}|_{H^{1}(M_{pj})}^{2}-b_{0}\sum_{p,j}|f_{pj}|_{L^{2}(M_{pj})}^{2}-\sigma_{0}\sum_{p,j}\int|f_{pj}|_{L^{2}(\partial M_{pj})}^{2}
\]
By the interpolation inequality (\ref{eq:BT-new}),
\[
J(\mathbf{f})\geqslant(1-\epsilon\sigma_{0})\sum_{p,j}|f_{pj}|_{H^{1}(M_{pj})}^{2}-(b_{0}+c_{\epsilon}\sigma_{0})\sum_{p,j}|f_{pj}|_{L^{2}(M_{pj})}^{2}
\]
which for sufficiently small value of $\epsilon>0$ proves the coercivity
of $J$. The sequential weakly lower semicontinuity of $J$ follows
from the sequential weakly lower semicontinuity of the norm of $H^{1}(T)$
and the compactness of the embedding $H^{1}(M)\hookrightarrow L^{2}(M)$.
Thus the conditions of Theorem 1.2 in \cite{key-105} are satisfied,
and from this we conclude that $J$ attains its infimum in $X$. The
position of the infimum is a critical point of $J^{\star}$ and, as
it was shown in Proposition \ref{prop:Categ-a}, it is a solution
of equation (\ref{eq:eigv-PDE}) with BC (\ref{eq:eigv-BC}). The
inequality (\ref{eq:J-min-eigv}) follows immediately form this.
\end{proof}

\section{\label{sec:Apps-2D}Application to Disconnected 2-Dimensional Partitioning
Problems}

We prove the existence of stable disconnected partitionings in 2 dimensions
by example.

\begin{figure}
\includegraphics[scale=0.75]{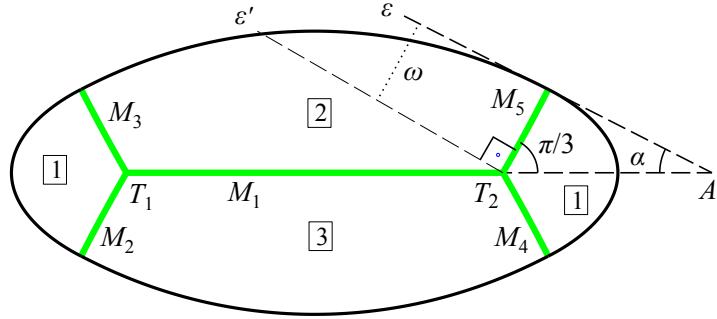}

\caption{\label{fig:DTJ}Disconnected three phase partitioning consisting of
two triple junctions. Boxed numbers indicate phases. $T_{1}$ and
$T_{2}$ are the triple junctions. The leaves of the triple junctions
$M_{2},\cdots,M_{5}$ are circular arcs of the same curvature $\kappa$
(absolute value) while $M_{1}$ is flat. By minimality the tangent
to $M_{5}$ at $T_{2}$ is at angle $\pi/3$ with the line $T_{1}A$.
$A\varepsilon$ is the tangent to $\Sigma=\partial\Omega$ at $M_{5}\cap\Sigma$
and $T_{2}\varepsilon^{\prime}$ is the normal to the tangent of $M_{5}$
at $T_{2}$. The intersection $C$ (not shown in the Figure) of $A\varepsilon$
and $T_{2}\varepsilon^{\prime}$ is the center of the circle with
radius $R=1/\kappa$ containing the circular arc $M_{5}$. Since $\omega=\widehat{ACT}_{2}$,
by plane geometry, $\omega=\frac{\pi}{6}-\alpha$. Analogous relations
hold for the other leaves. This Figure was produced by an Octave (MATLAB)
program. The boundary $\Sigma$ was drawn as a set of four four-degree
splines having curvature zero at their ends $M_{i}\cap\Sigma$, $i=2,\cdots,5$.
The indicated boundary is symmetric about the $x$ and $y$ axes,
but a non-symmetric boundary could also be drawn for the same triple
junction arrangement. }

\end{figure}

For two-dimensional partitions the Laplace-Beltrami operator reduces
to
\[
\Delta_{M}f=\frac{d^{2}f}{ds^{2}}
\]
where $s$ is the arc length of $M$, $M$ being any triple junction
leaf, and the integrals over $\partial M$ reduce to numbers. The
boundary condition (\ref{eq:eigv-BC}) reduces to
\begin{equation}
\frac{df}{ds}=0,\;\textrm{on}\,\partial M\cap\Sigma\label{eq:BC-2d}
\end{equation}
As we are interested in proving the existence of \emph{stable disconnected
partitionings}, we have chosen $\sigma=0$ at $\partial M\cap\Sigma$.
From part (ii) of Proposition \ref{prop:1st-var-gen} the curvature
of each leaf is constant, thus the only possibilities for $M$ are
line segments and circular arcs. Further, $|B_{M}|=\kappa=1/R$, where
$R$ is the radius of the arc or $\infty$ for line segments. We consider
disconnected partitionings having the topology of Figure \ref{fig:DTJ}.
We are using the sequential enumeration notation of Example \ref{exa:primary}
in which $\mathbf{f}=(f_{1},\cdots,f_{5})$, $T=(M_{1},\cdots,M_{5})$. 

For equation (\ref{eq:eigv-PDE}) we have the following three types
of solution, depending on the sign of $\mu+|B_{M}|^{2}$,
\[
\begin{aligned}(I) & \quad f(s)=-\frac{\lambda}{2k^{2}}+C\sin(ks)+D\cos(ks), & k^{2}=\mu+\kappa^{2}\Leftrightarrow\mu>-\kappa^{2}\\
(II) & \quad f(s)=\frac{\lambda}{2k^{2}}+Ce^{ks}+De^{-ks}, & k^{2}=-(\mu+\kappa^{2})\Leftrightarrow\mu<-\kappa^{2}\\
(III) & \quad f(s)=-\frac{\lambda}{4}s^{2}+Cs+D, & \mu=-\kappa^{2}
\end{aligned}
\]
and
\[
\lambda_{1}=-(\lambda_{2}+\lambda_{3}),\;\lambda_{4}=\lambda_{2},\;\lambda_{5}=\lambda_{3}
\]
while the $\lambda_{2},\,\lambda_{3}$ are independent variables.
The parametrizations of the $M_{i}$ (with the exception of $M_{1}$
which does not intersect $\Sigma$) are such that $M_{i}\cap\Sigma$
is obtained at $s=l_{i}$, where $l_{i}=|M_{i}|$ is the length of
$M_{i}$. From the BC (\ref{eq:BC-2d}) we obtain for the three cases
($i\neq1)$
\[
(I)\;D_{i}=C_{i}\cot(k_{i}l_{i}),\quad(II)\;D_{i}=C_{i}e^{2k_{i}l_{i}},\quad(III)\;C_{i}=-\tfrac{1}{2}\lambda_{i}l_{i}
\]
and
\[
\begin{aligned}(I) & \quad f_{i}(s)=-\frac{\lambda_{i}}{2k_{i}^{2}}+\frac{C_{i}}{\sin(k_{i}l_{i})}\cos k_{i}(s-l_{i}),\\
(II) & \quad f_{i}(s)=\frac{\lambda_{i}}{2k_{i}^{2}}+C_{i}e^{2k_{i}l_{i}}\cosh k_{i}(s-l_{i}),\\
(III) & \quad f(s)=-\frac{\lambda_{i}}{4}(s-l_{i})^{2}+\frac{\lambda_{i}l_{i}^{2}}{4}+D_{i},
\end{aligned}
\]

Since $f_{1}$ contains 2 constants while all other $f$'s only one,
we have in total 6 unknown constants, which together with $\lambda_{2}$,
and $\lambda_{3}$ make 8 unknowns. On the other hand, two volume
constraints, the BC's (\ref{eq:eigv-sp1}), (\ref{eq:eigv-sp2}),
and the compatibility condition on the two spines (see first of (\ref{eq:3-76}))
are 8 equations in total, and in this way we have a linear system
of 8 equations in 8 unknowns. The condition for existence of solutions
of this system is, as usually, obtained by setting its determinant
to 0, which gives a nonlinear equation for $k$. With a solution for
$k$ at hand, we can determine the eigenvalue $\mu$ by the last column
in the above table of possible solutions for $f$, and each eigenvector
determines an eigenfunction $f$ of problem (\ref{eq:eigv-PDE})-(\ref{eq:eigv-sp2}).

We specialize these relations by considering the case of Figure \ref{fig:DTJ}
in which the leaves $M_{2}$, $M_{3}$, $M_{4}$ and $M_{5}$ have
the same radius $R=1/\kappa$ and the same length $l$, while $M_{1}$
is flat and has length $L$. In this case
\[
II_{M_{21}}(\nu,\nu)=II_{M_{22}}(\nu,\nu)=-\frac{1}{R}=-\kappa
\]
\[
II_{M_{31}}(\nu,\nu)=II_{M_{32}}(\nu,\nu)=\kappa,\;II_{M_{1}}(\nu,\nu)=-(II_{M_{21}}+II_{M_{31}})=0
\]
and
\[
\alpha=-\frac{\sqrt{3}}{2}\kappa<0,\quad\beta=0
\]
for both spines.

We distinguish the following cases for $\mu$:

\subsection{Case I: $-\kappa^{2}<\mu<0,\;k^{2}=\mu+\kappa^{2}\,(0<\frac{k}{\kappa}<1)$}

For $i\neq1$ we have $\mu+|B_{M_{i}}|^{2}=\mu+\kappa^{2}>0$, and
$\mu+|B_{M_{i}}|^{2}=\mu<0$, so we have a solution type (I) on $M_{2}$,
$M_{3}$, $M_{4}$, $M_{5}$ and type (II) on $M_{1}$. Thus
\begin{equation}
\begin{aligned} & f_{1}(s)=-\frac{\lambda_{2}+\lambda_{3}}{2(\kappa^{2}-k^{2})}+C_{1}e^{\sqrt{\kappa^{2}-k^{2}}s}+D_{1}e^{-\sqrt{\kappa^{2}-k^{2}}s}\\
 & f_{i}(s)=-\frac{\lambda_{i}}{2k^{2}}+\frac{C_{i}}{\sin(kl)}\cos\left[k(s-l)\right],\quad i=2,\cdots,5
\end{aligned}
\label{eq:case-I}
\end{equation}
and their derivatives are given by
\[
\begin{aligned} & f_{1}^{\prime}(s)=\sqrt{\kappa^{2}-k^{2}}\left(C_{1}e^{\sqrt{\kappa^{2}-k^{2}}s}-D_{1}e^{-\sqrt{\kappa^{2}-k^{2}}s}\right)\\
 & f_{i}^{\prime}(s)=-\frac{kC_{i}}{\sin(kl)}\sin\left[k(s-l)\right],\quad i=2,\cdots,5
\end{aligned}
\]
In the second of (\ref{eq:case-I}) it is $\sin(kl)\neq0$, for otherwise
$k=\frac{n\pi}{l}$, $n\in\mathbb{Z}$, and by $0<k<\kappa$ it follows
that $0<n<\frac{\kappa l}{\pi}$. However, by simple geometric arguments,
$\kappa l=\frac{l}{R}\leqslant\frac{\pi}{3}$.

On spine 1 we have
\[
\begin{aligned} & D_{\nu_{1}}f_{1}(0)=-f_{1}^{\prime}(0)=-\sqrt{\kappa^{2}-k^{2}}\left(C_{1}-D_{1}\right)\\
 & D_{\nu_{2}}f_{2}(0)=-f_{2}^{\prime}(0)=-kC_{2}\\
 & D_{\nu_{3}}f_{3}(0)=-f_{3}^{\prime}(0)=-kC_{3}
\end{aligned}
\]
and similarly on spine 2,
\[
\begin{aligned} & D_{\nu_{1}}f_{1}(L)=+f_{1}^{\prime}(L)=\sqrt{\kappa^{2}-k^{2}}\left(C_{1}e^{\sqrt{\kappa^{2}-k^{2}}L}-D_{1}e^{-\sqrt{\kappa^{2}-k^{2}}L}\right)\\
 & D_{\nu_{4}}f_{4}(0)=-f_{4}^{\prime}(0)=-kC_{4}\\
 & D_{\nu_{5}}f_{5}(0)=-f_{5}^{\prime}(0)=-kC_{5}
\end{aligned}
\]
For brevity we set
\[
\textrm{Case}\,\textrm{I:}\quad a=\frac{2}{\sqrt{3}}\sqrt{1-\left(\frac{k}{\kappa}\right)^{2}},\;z=e^{L\sqrt{\kappa^{2}-k^{2}}},\;b=\frac{2l}{k^{2}}-\frac{L}{\kappa^{2}-k^{2}}
\]
From the conditions (\ref{eq:eigv-sp1}), (\ref{eq:eigv-sp2}) on
spine 1 we obtain
\begin{equation}
\lambda_{2}-\lambda_{3}-2k^{2}\left[\cot(kl)-\frac{\sqrt{3}}{2}\frac{k}{\kappa}\right](C_{2}-C_{3})=0\label{eq:X-21}
\end{equation}
\begin{equation}
\frac{\lambda_{2}+\lambda_{3}}{2(\kappa^{2}-k^{2})}-\left(a+1\right)C_{1}+\frac{1}{\sqrt{3}}\frac{k}{\kappa}(C_{2}+C_{3})+\left(a-1\right)D_{1}=0\label{eq:X-22}
\end{equation}
By the compatibility condition $f_{1}+f_{2}+f_{3}=0$ on spine 1 we
obtain
\begin{equation}
-\frac{1}{2}\left(\frac{1}{\kappa^{2}-k^{2}}+\frac{1}{k^{2}}\right)(\lambda_{2}+\lambda_{3})+C_{1}+(C_{2}+C_{3})\cot(kl)+D_{1}=0\label{eq:X-23}
\end{equation}
From the conditions (\ref{eq:eigv-sp1}), (\ref{eq:eigv-sp2}) on
spine 2 we obtain
\begin{equation}
\lambda_{2}-\lambda_{3}-2k^{2}\left[\cot(kl)-\frac{\sqrt{3}}{2}\frac{k}{\kappa}\right](C_{4}-C_{5})=0\label{eq:X-24}
\end{equation}
\begin{equation}
\frac{\lambda_{2}+\lambda_{3}}{2(\kappa^{2}-k^{2})}+\left(a-1\right)zC_{1}+\frac{1}{\sqrt{3}}\frac{k}{\kappa}(C_{4}+C_{5})-\left(a+1\right)\frac{1}{z}D_{1}=0\label{eq:X-25}
\end{equation}
The equality $f_{1}+f_{2}+f_{3}=0$ on spine 2 gives
\begin{equation}
-\frac{1}{2}\left(\frac{1}{\kappa^{2}-k^{2}}+\frac{1}{k^{2}}\right)(\lambda_{2}+\lambda_{3})+zC_{1}+(C_{4}+C_{5})\cot(kl)+\frac{1}{z}D_{1}=0\label{eq:X-26}
\end{equation}
Finally the volume conservation equations (\ref{eq:100}) give the
equations
\begin{equation}
b\frac{\lambda_{2}}{2}-\frac{L}{\kappa^{2}-k^{2}}\frac{\lambda_{3}}{2}+\frac{z-1}{\sqrt{\kappa^{2}-k^{2}}}\left(C_{1}+\frac{1}{z}D_{1}\right)-\frac{1}{k}(C_{2}+C_{4})=0\label{eq:X-27}
\end{equation}
\begin{equation}
-\frac{L}{\kappa^{2}-k^{2}}\frac{\lambda_{2}}{2}+b\frac{\lambda_{3}}{2}+\frac{z-1}{\sqrt{\kappa^{2}-k^{2}}}\left(C_{1}+\frac{1}{z}D_{1}\right)-\frac{1}{k}(C_{3}+C_{5})=0\label{eq:X-28}
\end{equation}

The following lemma allows the reduction of this system to a simpler
one.
\begin{lem}
\label{lem:case-i}Assume $\left[\tan(\kappa l)+\sqrt{3}\right]\kappa L<4$.
Then the $8\times8$ linear system of equations (\ref{eq:X-21})-(\ref{eq:X-28})
is equivalent to the following $3\times3$ linear system
\[
(S_{1})\;\begin{cases}
\frac{1}{\kappa^{2}-k^{2}}\lambda_{2}+\left[(a-1)z-a-1\right]C_{1}+\frac{2}{\sqrt{3}}\frac{k}{\kappa}C_{2}=0\\
\left(\frac{1}{\kappa^{2}-k^{2}}+\frac{1}{k^{2}}\right)\lambda_{2}-(z+1)C_{1}-2C_{2}\cot(kl)=0\\
\left(\frac{l}{k^{2}}-\frac{L}{\kappa^{2}-k^{2}}\right)\lambda_{2}+2\frac{z-1}{\sqrt{\kappa^{2}-k^{2}}}C_{1}-\frac{2}{k}C_{2}=0
\end{cases}
\]
and $C_{2}=C_{3}=C_{4}=C_{5}$, $D_{1}=zC_{1}$, $\lambda_{2}=\lambda_{3}$.
\end{lem}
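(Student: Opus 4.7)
The plan is to exploit the $\mathbb{Z}_2\times\mathbb{Z}_2$ symmetry of the configuration in Figure \ref{fig:DTJ}: the phase swap $2\leftrightarrow 3$, acting on the unknowns as $\lambda_2\leftrightarrow\lambda_3$, $C_2\leftrightarrow C_3$, $C_4\leftrightarrow C_5$ and fixing $C_1,D_1$; and the top--bottom swap of the two spines, acting as $C_2\leftrightarrow C_4$, $C_3\leftrightarrow C_5$, $C_1\leftrightarrow D_1/z$, $D_1\leftrightarrow zC_1$ and fixing $\lambda_2,\lambda_3$. A direct check shows that both symmetries leave the system (\ref{eq:X-21})--(\ref{eq:X-28}) invariant: under the phase swap, (\ref{eq:X-21}) and (\ref{eq:X-24}) change sign while the other six equations are invariant; under the top--bottom swap (\ref{eq:X-21})$\leftrightarrow$(\ref{eq:X-24}), (\ref{eq:X-22})$\leftrightarrow$(\ref{eq:X-25}), (\ref{eq:X-23})$\leftrightarrow$(\ref{eq:X-26}), while (\ref{eq:X-27}) and (\ref{eq:X-28}) are each mapped to themselves.

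First I would decompose the 8-dimensional unknown space into the four isotypical components $(++),(+-),(-+),(--)$ of $\mathbb{Z}_2\times\mathbb{Z}_2$, of dimensions $3,2,2,1$ respectively, with the $(++)$ sector coordinatised by $\lambda_2+\lambda_3$, $C_1+D_1/z$ and $C_2+C_3+C_4+C_5$. Since the equations respect the symmetries, the $8\times 8$ system block-diagonalises into four decoupled subsystems of sizes $3\times 3$, $2\times 2$, $2\times 2$, $1\times 1$. Substituting the $(++)$ identification $D_1=zC_1$, $\lambda_2=\lambda_3$, $C_2=C_3=C_4=C_5$ into (\ref{eq:X-22}), (\ref{eq:X-23}) and (\ref{eq:X-27}) yields precisely the system $(S_1)$ after elementary simplification, the remaining five equations becoming redundant copies. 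The heart of the lemma is then to show that the three non-$(++)$ blocks admit only the trivial solution, so that every solution of the $8\times 8$ system automatically lies in the $(++)$ sector.

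Forming the combinations (\ref{eq:X-21})$\pm$(\ref{eq:X-24}), (\ref{eq:X-22})$\pm$(\ref{eq:X-25}), (\ref{eq:X-23})$\pm$(\ref{eq:X-26}) and (\ref{eq:X-27})$\pm$(\ref{eq:X-28}) makes each non-$(++)$ block explicit: the $(--)$ block is the single equation $A\delta=0$ with $\delta=(C_2-C_3)-(C_4-C_5)$ and $A=\cot(kl)-\tfrac{\sqrt{3}}{2}\tfrac{k}{\kappa}$; the $(-+)$ block in $\Delta=\lambda_2-\lambda_3$ and $\delta_+=(C_2-C_3)+(C_4-C_5)$ reduces to the pair $\Delta=k^2A\,\delta_+$ and $l\Delta=k\delta_+$, with compatibility $\delta_+(klA-1)=0$; and the $(+-)$ block in $\epsilon_1=C_1-D_1/z$ and $\epsilon_2=(C_2+C_3)-(C_4+C_5)$ has determinant proportional to $[(a+1)+(a-1)z]\cot(kl)+\tfrac{k}{\sqrt{3}\,\kappa}(1-z)$. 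The main obstacle is verifying that all three determinants are strictly nonzero for every admissible $k$. The $(--)$ and $(-+)$ conditions, $A\ne 0$ and $klA\ne 1$, are explicit trigonometric inequalities in $kl$ controlled by the minimality bound $0<\kappa l\le\pi/3$. The $(+-)$ determinant mixes the trigonometric factor $\cot(kl)$ with the exponential $z=e^{L\sqrt{\kappa^2-k^2}}$, and it is here that the hypothesis $[\tan(\kappa l)+\sqrt{3}]\kappa L<4$ is decisive: it bounds the growth of $z-1$ against $\cot(kl)$ so that the two contributions cannot cancel. Once non-degeneracy of the three auxiliary blocks is established, the identities $C_2=C_3=C_4=C_5$, $D_1=zC_1$, $\lambda_2=\lambda_3$ are forced and the reduction to $(S_1)$ follows by direct substitution.
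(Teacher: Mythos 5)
Your proposal is correct in substance and, once the symmetry language is stripped away, performs exactly the computation the paper does: the paper's proof forms precisely your isotypical combinations by hand --- the pairs (\ref{eq:X-21})$\pm$(\ref{eq:X-24}), (\ref{eq:X-22})$\pm$(\ref{eq:X-25}), (\ref{eq:X-23})$\pm$(\ref{eq:X-26}), (\ref{eq:X-27})$\pm$(\ref{eq:X-28}) --- obtaining your $(--)$ equation as (\ref{eq:X-57}), your $(-+)$ compatibility $\delta_{+}(klA-1)=0$ as the step forcing $\lambda_{2}=\lambda_{3}$, and your $(+-)$ determinant as equations (\ref{eq:X-58})--(\ref{eq:X-59}), whose degeneracy condition is rewritten as (\ref{eq:106}) and excluded by the hypothesis $\left[\tan(\kappa l)+\sqrt{3}\right]\kappa L<4$; you have correctly located where that hypothesis enters. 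The $\mathbb{Z}_{2}\times\mathbb{Z}_{2}$ framing buys a cleaner explanation of \emph{why} the $8\times8$ system decouples into blocks of sizes $3,2,2,1$ and guarantees in advance that the three equations (\ref{eq:X-22}), (\ref{eq:X-23}), (\ref{eq:X-27}) restricted to the $(++)$ sector reproduce $(S_{1})$, whereas the paper's route is purely computational; the price is that you must still verify the same three non-degeneracy conditions, so nothing is saved there. Two small corrections. First, under the phase swap the equations (\ref{eq:X-27}) and (\ref{eq:X-28}) are interchanged rather than individually invariant (only the pair is preserved); this does not affect the block decomposition but should be stated accurately. Second, and more importantly, your claim that $A=\cot(kl)-\tfrac{\sqrt{3}}{2}\tfrac{k}{\kappa}\neq0$ is ``controlled by the minimality bound $0<\kappa l\leqslant\pi/3$'' is not correct as stated: with $\kappa l=\pi/3$ and $k/\kappa$ close to $1$ one has $A\approx\tfrac{1}{\sqrt{3}}-\tfrac{\sqrt{3}}{2}<0$, while $A>0$ for small $kl$, so $A$ would vanish for some admissible $k$. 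The paper instead uses the sharper geometric bound $\kappa l=\omega<\pi/6$ coming from Figure \ref{fig:DTJ}, which gives $\tfrac{\sqrt{3}}{2}\tfrac{k}{\kappa}\tan(kl)<\tfrac{\sqrt{3}}{2}\tan(\pi/6)=\tfrac{1}{2}<1$ and hence $A>0$; the $(-+)$ condition $klA\neq1$ then follows from $klA<kl\cot(kl)<1$. With that bound substituted, your argument closes.
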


\begin{rem}
The condition is satisfied if $\kappa L<\sqrt{3}$, for by simple
geometric arguments (see Figure \ref{fig:DTJ}) $\kappa l=\omega<\frac{\pi}{6}$,
and $\tan(\kappa l)<\frac{1}{\sqrt{3}}$.
\end{rem}

\begin{proof}
The pairs of equations (\ref{eq:X-27}), (\ref{eq:X-28}) and (\ref{eq:X-21}),
(\ref{eq:X-24}) give
\begin{equation}
C_{2}-C_{3}+C_{4}-C_{5}=\frac{l}{k}(\lambda_{2}-\lambda_{3})\label{eq:X-56}
\end{equation}
and
\[
\left[\cot(kl)-\frac{\sqrt{3}}{2}\frac{k}{\kappa}\right](C_{2}-C_{3}-C_{4}+C_{5})=0
\]
Since $\cot(kl)\not=\frac{\sqrt{3}}{2}\frac{k}{\kappa}$, which follows
from $\frac{\sqrt{3}}{2}\frac{k}{\kappa}\tan(kl)<\frac{\sqrt{3}}{2}\tan\omega<\frac{1}{2}$
and $\omega<\frac{\pi}{6}$ (see Figure \ref{fig:DTJ}), we obtain
\begin{equation}
C_{2}-C_{3}-C_{4}+C_{5}=0\label{eq:X-57}
\end{equation}
By equations (\ref{eq:X-23}), (\ref{eq:X-26}) we obtain
\begin{equation}
C_{2}+C_{3}-C_{4}-C_{5}=(C_{1}-\frac{1}{z}D_{1})(z-1)\tan(kl)\label{eq:X-58}
\end{equation}
and by (\ref{eq:X-22}), (\ref{eq:X-25})
\begin{equation}
C_{2}+C_{3}-C_{4}-C_{5}=\sqrt{3}(C_{1}-\frac{1}{z}D_{1})\frac{\kappa}{k}\left[(a-1)z+a+1\right]\label{eq:X-59}
\end{equation}
The last two equations give
\begin{equation}
D_{1}=zC_{1}\label{eq:X-61}
\end{equation}
and
\[
\frac{k}{\kappa}\tan(kl)+\sqrt{3}\left(1-a\frac{z+1}{z-1}\right)=0
\]
which on setting $x=\frac{k}{\kappa}$, $l^{\star}=\kappa l$, $L^{\star}=\kappa L$
and using the expression of $a$, assumes the form
\begin{equation}
x\tan(l^{\star}x)=2\sqrt{1-x^{2}}\frac{z+1}{z-1}-\sqrt{3}.\label{eq:106}
\end{equation}
The function on the right side is decreasing and thus attains its
minimum at $x=1$, hence it is greater than $\frac{4}{L^{\star}}-\sqrt{3}$.
From $x\tan(l^{\star}x)<\tan(l^{\star})=\tan(\kappa l)$ and the hypothesis,
it follows that equation (\ref{eq:106}) has no solution.

By (\ref{eq:X-59}), (\ref{eq:X-61}) and (\ref{eq:X-56}), (\ref{eq:X-57})
we obtain
\begin{equation}
C_{2}+C_{3}-C_{4}-C_{5}=0\label{eq:X-63}
\end{equation}
\begin{equation}
C_{2}-C_{3}=C_{4}-C_{5}=\frac{l}{2k}(\lambda_{2}-\lambda_{3})\label{eq:X-64-65}
\end{equation}
By (\ref{eq:X-21}) and (\ref{eq:X-64-65}) it follows that
\[
(\lambda_{2}-\lambda_{3})\left[kl\left(\cot(kl)-\frac{\sqrt{3}}{2}\frac{k}{\kappa}\right)-1\right]=0
\]
and from this and $\cot(kl)-\frac{\sqrt{3}}{2}\frac{k}{\kappa}<0$,
$\lambda_{2}=\lambda_{3}$. Use of equations (\ref{eq:X-63}), (\ref{eq:X-64-65}),
and the remaining equations of system (\ref{eq:X-21})-(\ref{eq:X-28}),
i.e. (\ref{eq:X-22}), (\ref{eq:X-23}) and (\ref{eq:X-27}), completes
the proof. 
\end{proof}

\subsection{Case II: $\mu<-\kappa^{2},\;k>0$}

Here $k$ is defined by $k^{2}=-(\mu+\kappa^{2})$, so that the valid
range of $k$ is $k>0$. In this case $k_{i}=k$ for $i=2,\cdots,5$
and $k_{1}=-(\mu+\kappa_{1}^{2})=-\mu>0$. Thus we have a solution
type (II) for all $f_{i}$,
\begin{equation}
\begin{aligned} & f_{1}(s)=-\frac{\lambda_{2}+\lambda_{3}}{2(k^{2}+\kappa^{2})}+C_{1}e^{\sqrt{k^{2}+\kappa^{2}}s}+D_{1}e^{-\sqrt{k^{2}+\kappa^{2}}s}\\
 & f_{i}(s)=\frac{\lambda_{i}}{2k^{2}}+2C_{i}e^{kl}\cosh k(s-l),\quad i=2,\cdots,5
\end{aligned}
\label{eq:case-II}
\end{equation}
and their derivatives are given by
\[
\begin{aligned} & f_{1}^{\prime}(s)=\sqrt{k^{2}+\kappa^{2}}\left(C_{1}e^{\sqrt{k^{2}+\kappa^{2}}s}-D_{1}e^{-\sqrt{k^{2}+\kappa^{2}}s}\right)\\
 & f_{i}^{\prime}(s)=2kC_{i}e^{kl}\sinh k(s-l),\quad i=2,\cdots,5
\end{aligned}
\]
Proceeding as in case I we obtain the following linear system:
\begin{equation}
\frac{\lambda_{2}-\lambda_{3}}{2k^{2}}+\left[\left(\frac{\sqrt{3}}{2}\frac{k}{\kappa}+1\right)e^{2kl}-\left(\frac{\sqrt{3}}{2}\frac{k}{\kappa}-1\right)\right](C_{2}-C_{3})=0\label{eq:X-29}
\end{equation}
\begin{equation}
\frac{\lambda_{2}-\lambda_{3}}{2(k^{2}+\kappa^{2})}-(a+1)C_{1}-\frac{1}{\sqrt{3}}\frac{k}{\kappa}(e^{2kl}-1)(C_{2}+C_{3})+(a-1)D_{1}=0\label{eq:X-30}
\end{equation}
\begin{equation}
\frac{\lambda_{2}-\lambda_{3}}{2}\left(\frac{1}{k^{2}}-\frac{1}{k^{2}+\kappa^{2}}\right)+C_{1}+(e^{2kl}+1)(C_{2}+C_{3})+D_{1}=0\label{eq:X-31}
\end{equation}
\begin{equation}
\frac{\lambda_{2}+\lambda_{3}}{2k^{2}}+\left[\left(\frac{\sqrt{3}}{2}\frac{k}{\kappa}+1\right)e^{2kl}-\left(\frac{\sqrt{3}}{2}\frac{k}{\kappa}-1\right)\right](C_{4}-C_{5})=0\label{eq:X-32}
\end{equation}
\begin{equation}
\frac{\lambda_{2}-\lambda_{3}}{2(k^{2}+\kappa^{2})}+(a-1)zC_{1}-\frac{1}{\sqrt{3}}\frac{k}{\kappa}(e^{2kl}-1)(C_{4}+C_{5})-(a+1)\frac{1}{z}D_{1}=0\label{eq:X-33}
\end{equation}
\begin{equation}
\frac{\lambda_{2}+\lambda_{3}}{2}\left(\frac{1}{k^{2}}-\frac{1}{k^{2}+\kappa^{2}}\right)+zC_{1}+(e^{2kl}+1)(C_{4}+C_{5})+\frac{1}{z}D_{1}=0\label{eq:X-34}
\end{equation}
\begin{equation}
-b\frac{\lambda_{2}}{2}-\frac{L}{k^{2}+\kappa^{2}}\frac{\lambda_{3}}{2}+\frac{z-1}{\sqrt{k^{2}+\kappa^{2}}}\left(C_{1}+\frac{1}{z}D_{1}\right)-\frac{e^{2kl}-1}{k}(C_{2}+C_{4})=0\label{eq:X-35}
\end{equation}
\begin{equation}
-\frac{L}{k^{2}+\kappa^{2}}\frac{\lambda_{2}}{2}-b\frac{\lambda_{3}}{2}+\frac{z-1}{\sqrt{k^{2}+\kappa^{2}}}\left(C_{1}+\frac{1}{z}D_{1}\right)-\frac{e^{2kl}-1}{k}(C_{3}+C_{5})=0\label{eq:X-36}
\end{equation}
The $a$, $b$, $z$ are now defined as
\[
\textrm{Case}\,\textrm{II:}\quad a=\frac{2}{\sqrt{3}}\sqrt{1+\left(\frac{k}{\kappa}\right)^{2}},\;z=e^{L\sqrt{k^{2}+\kappa^{2}}},\;b=\frac{2l}{k^{2}}+\frac{L}{k^{2}+\kappa^{2}}
\]

As previously,
\begin{lem}
\label{lem:case-ii}The $8\times8$ linear system of equations (\ref{eq:X-29})-(\ref{eq:X-36})
is equivalent to the following $3\times3$ linear system
\[
(S_{2})\;\begin{cases}
\frac{1}{k^{2}+\kappa^{2}}\lambda_{2}-\left[z+1-a(z-1)\right]C_{1}-\frac{2}{\sqrt{3}}\frac{k}{\kappa}\left(e^{2kl}-1\right)C_{2}=0\\
\left(\frac{1}{k^{2}}-\frac{1}{k^{2}+\kappa^{2}}\right)\lambda_{2}+(z+1)C_{1}+2C_{2}\left(e^{2kl}+1\right)=0\\
\left(\frac{l}{k^{2}}+\frac{L}{k^{2}+\kappa^{2}}\right)\lambda_{2}-2\frac{z-1}{\sqrt{k^{2}+\kappa^{2}}}C_{1}+\frac{2}{k}\left(e^{2kl}-1\right)C_{2}=0
\end{cases}
\]
and $C_{2}=C_{3}=C_{4}=C_{5}$, $D_{1}=zC_{1}$, $\lambda_{2}=\lambda_{3}$.
\end{lem}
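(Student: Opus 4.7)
The plan is to mirror the proof of Lemma~\ref{lem:case-i} step for step, with the trigonometric factors coming from solution type~(I) replaced by the hyperbolic expressions in $e^{2kl}$ that arise from solution type~(II) in \eqref{eq:case-II}. The structural advantage of Case~II is that $k>0$ forces every coefficient on which we need to divide to be a sum of manifestly positive quantities: $e^{2kl}>1$ throughout, and $a=\tfrac{2}{\sqrt{3}}\sqrt{1+(k/\kappa)^{2}}>\tfrac{2}{\sqrt{3}}$. This is why Lemma~\ref{lem:case-ii} carries no smallness hypothesis on $\kappa L$, in contrast to Lemma~\ref{lem:case-i}.

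I would form the same four pairings used in the proof of Lemma~\ref{lem:case-i}: subtract and add \eqref{eq:X-29} with \eqref{eq:X-32}, \eqref{eq:X-30} with \eqref{eq:X-33}, \eqref{eq:X-31} with \eqref{eq:X-34}, and \eqref{eq:X-35} with \eqref{eq:X-36}. The antisymmetric pairings produce linear relations for $C_{2}-C_{3}\pm(C_{4}-C_{5})$ and an expression for $C_{1}-z^{-1}D_{1}$. The step that gives $C_{2}-C_{3}-C_{4}+C_{5}=0$ here amounts to dividing \eqref{eq:X-29}, \eqref{eq:X-32} by $\bigl[\bigl(\tfrac{\sqrt{3}}{2}\tfrac{k}{\kappa}+1\bigr)e^{2kl}-\bigl(\tfrac{\sqrt{3}}{2}\tfrac{k}{\kappa}-1\bigr)\bigr]$, a sum of two strictly positive terms, rather than by $\cot(kl)-\tfrac{\sqrt{3}}{2}\tfrac{k}{\kappa}$; the geometric bound $\omega<\pi/6$ needed in Case~I is not invoked here.

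The main obstacle is the Case~II analogue of equation \eqref{eq:106}. Equating two distinct expressions for $C_{2}+C_{3}-C_{4}-C_{5}$ obtained from the pairs \eqref{eq:X-30}--\eqref{eq:X-33} and \eqref{eq:X-31}--\eqref{eq:X-34} produces $D_{1}=zC_{1}$ together with a transcendental relation in $k$, schematically of the form
\[
\tfrac{k}{\kappa}\tanh(kl)=\sqrt{3}\Bigl(a\,\tfrac{z+1}{z-1}-1\Bigr).
\]
Setting $x=k/\kappa$, $l^{\star}=\kappa l$, $L^{\star}=\kappa L$, and $u=\tfrac{1}{2}L^{\star}\sqrt{1+x^{2}}$, the identity $\tfrac{z+1}{z-1}=\coth u$ together with $u\coth u>1$ gives that the right side exceeds $\tfrac{4}{L^{\star}}-\sqrt{3}$ for all $x>0$ and grows like $2x-\sqrt{3}$ for large $x$, whereas the left side is dominated by $\min(l^{\star}x^{2},x)$; a straightforward monotonicity comparison --- the single genuine analytic step of the proof --- rules out any positive root. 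The alternative $D_{1}=zC_{1}$ is therefore forced.

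With $D_{1}=zC_{1}$ in hand, the remaining identifications $\lambda_{2}=\lambda_{3}$ and $C_{2}=C_{3}=C_{4}=C_{5}$ follow by the same bookkeeping as in the analogues of \eqref{eq:X-56}--\eqref{eq:X-57} and \eqref{eq:X-64-65}. Substituting all three identifications into \eqref{eq:X-30}, \eqref{eq:X-31}, and \eqref{eq:X-35} reproduces $(S_{2})$ term for term, and the remaining five equations of \eqref{eq:X-29}--\eqref{eq:X-36} reduce to identities under these substitutions. The converse direction is an immediate substitution.
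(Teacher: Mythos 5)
Your overall strategy coincides with the paper's, which offers no proof beyond the assertion that the argument is analogous to that of Lemma \ref{lem:case-i}: the four pairings of the eight equations, the order of deductions ($C_{2}-C_{3}-C_{4}+C_{5}=0$ first, then the dichotomy $D_{1}=zC_{1}$ versus a transcendental relation, then $\lambda_{2}=\lambda_{3}$ and the collapse to $(S_{2})$) are exactly the intended ones, and your observation that the divisor $\left(\tfrac{\sqrt{3}}{2}\tfrac{k}{\kappa}+1\right)e^{2kl}-\left(\tfrac{\sqrt{3}}{2}\tfrac{k}{\kappa}-1\right)=\tfrac{\sqrt{3}}{2}\tfrac{k}{\kappa}\left(e^{2kl}-1\right)+\left(e^{2kl}+1\right)>0$ correctly explains why no smallness hypothesis on $\kappa L$ appears in this lemma.

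One correction to the step you single out as the main analytic obstacle: the sign of the transcendental alternative is reversed. In the pair \eqref{eq:X-30}, \eqref{eq:X-33} the coefficient of $C_{2}+C_{3}$ is $-\tfrac{1}{\sqrt{3}}\tfrac{k}{\kappa}(e^{2kl}-1)$, i.e.\ of \emph{opposite} sign to its Case I counterpart in \eqref{eq:X-22}, so equating the two resulting expressions for $C_{2}+C_{3}-C_{4}-C_{5}$ gives either $D_{1}=zC_{1}$ or $\tfrac{k}{\kappa}\tanh(kl)=\sqrt{3}\left(1-a\tfrac{z+1}{z-1}\right)$, not $\sqrt{3}\left(a\tfrac{z+1}{z-1}-1\right)$. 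Since $a\geqslant\tfrac{2}{\sqrt{3}}>1$ and $\tfrac{z+1}{z-1}>1$, the right side is strictly negative while the left side is strictly positive, so the alternative is vacuous at a glance; the monotonicity comparison you sketch addresses the wrong sign, is not actually carried out, and (as stated, via the bound $\tfrac{4}{L^{\star}}-\sqrt{3}$) would not cover large $L^{\star}$ anyway. With $D_{1}=zC_{1}$ forced, the rest of your reduction is correct, with the caveat that the combinations $\lambda_{2}\pm\lambda_{3}$ appearing in \eqref{eq:X-29}--\eqref{eq:X-36} must be read consistently with their Case I counterparts (e.g.\ \eqref{eq:X-29} and \eqref{eq:X-32} carrying the same combination) for the final bookkeeping yielding $\lambda_{2}=\lambda_{3}$ and $C_{2}=C_{3}=C_{4}=C_{5}$ to close.
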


The proof of Lemma \ref{lem:case-ii} is analogous to the proof of
Lemma \ref{lem:case-i}.

\subsection{Case III: $\mu=-\kappa^{2}$}

In this case $k_{1}^{2}=-(\mu+\kappa_{1}^{2})=-\mu=\kappa^{2}$, and
$k_{i}=\mu+\kappa_{i}^{2}=\mu+\kappa^{2}=0$ $(i=2,\cdots,5)$, and
thus we have a solution type (II) for $f_{1}$ and type (III) for
all other $f_{i}$:
\begin{equation}
\begin{aligned} & f_{1}(s)=-\frac{\lambda_{2}+\lambda_{3}}{2\kappa^{2}}+C_{1}e^{\kappa s}+D_{1}e^{-\kappa s}\\
 & f_{i}(s)=-\frac{\lambda_{i}}{4}(s-l)^{2}+C_{i},\quad i=2,\cdots,5
\end{aligned}
\label{eq:case-III}
\end{equation}
Their derivatives are
\[
\begin{aligned} & f_{1}^{\prime}(s)=\kappa\left(C_{1}e^{\kappa s}-D_{1}e^{-\kappa s}\right)\\
 & f_{i}^{\prime}(s)=-\frac{\lambda_{i}}{2}(s-l),\quad i=2,\cdots,5
\end{aligned}
\]
As previously we obtain the system
\begin{equation}
\left(\frac{\sqrt{3}}{\kappa}+l\right)\frac{\lambda_{2}+\lambda_{3}}{4}-\kappa\left(1+\frac{\sqrt{3}}{2}\right)C_{1}+\kappa\left(1-\frac{\sqrt{3}}{2}\right)D_{1}=0\label{eq:X-37}
\end{equation}
\begin{equation}
\left(\frac{\sqrt{3}}{\kappa}+l\right)l\frac{\lambda_{3}-\lambda_{2}}{4}+C_{2}-C_{3}=0\label{eq:X-38}
\end{equation}
\begin{equation}
-\left(\frac{2}{\kappa^{2}}+l^{2}\right)\frac{\lambda_{2}+\lambda_{3}}{4}+C_{1}+C_{2}+C_{3}+D_{1}=0\label{eq:X-39}
\end{equation}
\begin{equation}
\left(\frac{\sqrt{3}}{\kappa}+l\right)\frac{\lambda_{2}+\lambda_{3}}{4}+\left(1-\frac{\sqrt{3}}{2}\right)\kappa e^{\kappa L}C_{1}-\left(1+\frac{\sqrt{3}}{2}\right)\kappa e^{-\kappa L}D_{1}=0\label{eq:X-40}
\end{equation}
\begin{equation}
\left(\frac{\sqrt{3}}{\kappa}+l\right)l\frac{\lambda_{3}-\lambda_{2}}{4}+C_{4}-C_{5}=0\label{eq:X-41}
\end{equation}
\begin{equation}
-\left(\frac{2}{\kappa^{2}}+l^{2}\right)\frac{\lambda_{2}+\lambda_{3}}{4}+e^{\kappa L}C_{1}+C_{4}+C_{5}+e^{-\kappa L}D_{1}=0\label{eq:X-42}
\end{equation}
\begin{equation}
\left(\frac{L}{\kappa^{2}}-\frac{l^{3}}{3}\right)\frac{\lambda_{2}}{2}+\frac{L}{\kappa^{2}}\frac{\lambda_{3}}{2}-\frac{e^{\kappa L}-1}{\kappa}(C_{1}+e^{-\kappa L}D_{1})+l(C_{2}+C_{4})=0\label{eq:X-43}
\end{equation}
\begin{equation}
\frac{L}{\kappa^{2}}\frac{\lambda_{2}}{2}+\left(\frac{L}{\kappa^{2}}-\frac{l^{3}}{3}\right)\frac{\lambda_{3}}{2}-\frac{e^{\kappa L}-1}{\kappa}(C_{1}+e^{-\kappa L}D_{1})+l(C_{3}+C_{5})=0\label{eq:X-44}
\end{equation}
\begin{lem}
\label{lem:case-iii}The $8\times8$ linear system of equations (\ref{eq:X-37})-(\ref{eq:X-44})
is equivalent to the following $3\times3$ linear system
\[
(S_{3})\;\begin{cases}
\left(\sqrt{3}+\kappa l\right)\lambda_{2}^{\star}+\left[z-1-\tfrac{\sqrt{3}}{2}(z+1)\right]C_{1}=0\\
-\left(\kappa^{2}l^{2}+2\right)\lambda_{2}^{\star}+(z+1)C_{1}+2C_{2}=0\\
\left(\kappa L-\tfrac{1}{6}\kappa^{3}l^{3}\right)\lambda_{2}^{\star}-(z-1)C_{1}+\kappa lC_{2}=0
\end{cases}
\]
and $C_{2}=C_{3}=C_{4}=C_{5}$, $D_{1}=zC_{1}$, $\lambda_{2}=\lambda_{3}$.
In $(S_{3})$, $\lambda_{2}^{\star}=\frac{\lambda_{2}}{2\kappa^{2}}$,
and $z=e^{\kappa L}$.
\end{lem}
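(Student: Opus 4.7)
My plan is to mimic the proof of Lemma \ref{lem:case-i}, exploiting the symmetry between the two triple junctions that is built into the hypothesis (the four curved leaves $M_{2},M_{3},M_{4},M_{5}$ all have the same length $l$ and curvature $\kappa$, and $M_{1}$ is flat). The eight equations (\ref{eq:X-37})--(\ref{eq:X-44}) naturally pair up by junction, so I extract symmetric and antisymmetric combinations of each pair in order to collapse the system down to three independent equations.

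First I read off directly from (\ref{eq:X-38}) and (\ref{eq:X-41}) the identity
\[
C_{2}-C_{3}\;=\;C_{4}-C_{5}\;=\;\tfrac{l}{4}\bigl(\tfrac{\sqrt{3}}{\kappa}+l\bigr)(\lambda_{2}-\lambda_{3}).
\]
Next, the difference (\ref{eq:X-43})$-$(\ref{eq:X-44}) causes all $C_{1}$, $D_{1}$ and exponential terms to cancel and produces $C_{2}-C_{3}+C_{4}-C_{5}=\tfrac{l^{2}}{6}(\lambda_{2}-\lambda_{3})$. Combining the two relations forces $\tfrac{l^{2}}{12}(\lambda_{2}-\lambda_{3})=\tfrac{l}{4}(\tfrac{\sqrt{3}}{\kappa}+l)(\lambda_{2}-\lambda_{3})$; but $\tfrac{l^{2}}{12}\neq\tfrac{l}{4}(\tfrac{\sqrt{3}}{\kappa}+l)$ since that would demand $-\tfrac{l^{2}}{6}=\tfrac{\sqrt{3}l}{4\kappa}$, impossible for $l,\kappa>0$. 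Hence $\lambda_{2}=\lambda_{3}$ and consequently $C_{2}=C_{3}$, $C_{4}=C_{5}$.

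I then subtract (\ref{eq:X-37}) from (\ref{eq:X-40}); with $\lambda_{2}+\lambda_{3}$ cancelling, the result factors as
\[
\kappa\bigl[(1-\tfrac{\sqrt{3}}{2})z+(1+\tfrac{\sqrt{3}}{2})\bigr]\bigl(C_{1}-e^{-\kappa L}D_{1}\bigr)=0.
\]
Both summands inside the bracket are strictly positive, so $D_{1}=zC_{1}$. Substituting the accumulated identities into the difference (\ref{eq:X-42})$-$(\ref{eq:X-39}) and using $e^{-\kappa L}z=1$ gives $2(C_{4}-C_{2})=0$, whence $C_{2}=C_{3}=C_{4}=C_{5}$. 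At this point each junction-pair of the original eight equations collapses to a single independent equation; I retain (\ref{eq:X-37}), (\ref{eq:X-39}) and (\ref{eq:X-43}), substitute $\lambda_{2}^{\star}=\lambda_{2}/(2\kappa^{2})$ and $z=e^{\kappa L}$, and clear denominators by multiplying through by appropriate powers of $\kappa$; the three equations become exactly $(S_{3})$. The converse direction is immediate: any solution of $(S_{3})$, extended by $C_{3}=C_{4}=C_{5}=C_{2}$, $D_{1}=zC_{1}$ and $\lambda_{3}=\lambda_{2}$, satisfies (\ref{eq:X-37})--(\ref{eq:X-44}) by construction.

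The main obstacle, relative to Case I, is essentially bookkeeping rather than analysis: since $\mu=-\kappa^{2}$ is fixed there is no free spectral parameter and no transcendental equation to exclude, so no auxiliary hypothesis of the $[\tan(\kappa l)+\sqrt{3}]\kappa L<4$ type is required. The only care needed is in verifying nonvanishing of each divisor --- reducing to the elementary facts $1-\tfrac{\sqrt{3}}{2}>0$ and the sign argument above --- and in tracking the powers of $\kappa$ during the final rescaling so as to land precisely on $(S_{3})$ in the normalised form stated.
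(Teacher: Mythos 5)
Your proof is correct and is essentially the argument the paper intends, since the paper only remarks that the proof is ``analogous to that of Lemma \ref{lem:case-i}'': you form the same symmetric/antisymmetric combinations of the junction-paired equations, and all the key identities ($C_{2}-C_{3}+C_{4}-C_{5}=\tfrac{l^{2}}{6}(\lambda_{2}-\lambda_{3})$, the factorization of (\ref{eq:X-40})$-$(\ref{eq:X-37}), and the reduction of (\ref{eq:X-37}), (\ref{eq:X-39}), (\ref{eq:X-43}) to $(S_{3})$) check out. Your observation that no smallness hypothesis analogous to $[\tan(\kappa l)+\sqrt{3}]\kappa L<4$ is needed here, because the bracket $(1-\tfrac{\sqrt{3}}{2})z+(1+\tfrac{\sqrt{3}}{2})$ is unconditionally positive, is also accurate and consistent with the unconditional statement of the lemma.
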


The proof is analogous to that of Lemma \ref{lem:case-i}.

\subsection{Case IV: $\mu=0$}

This is the case of neutral stability. Since $k_{1}^{2}=-(\mu+\kappa_{1}^{2})=0$,
and $k_{i}=\mu+\kappa_{i}^{2}=\kappa^{2}>0$ $(i=2,\cdots,5)$, we
have a solution type (III) for $f_{1}$ and type (I) for all other
$f_{i}$:
\begin{equation}
\begin{aligned} & f_{1}(s)=\frac{\lambda_{2}+\lambda_{3}}{4}s^{2}+C_{1}s+D_{1}\\
 & f_{i}(s)=-\frac{\lambda_{i}}{2\kappa^{2}}+\frac{C_{i}}{\sin(\kappa l)}\cos\kappa(s-l),\quad i=2,\cdots,5
\end{aligned}
\label{eq:case-IV}
\end{equation}
Proceeding as in case I we obtain the following linear system:
\begin{equation}
\frac{2}{\kappa}C_{1}-C_{2}-C_{3}+\sqrt{3}D_{1}=0\label{eq:X-IV-1}
\end{equation}
\begin{equation}
-\frac{\lambda_{2}-\lambda_{3}}{2\kappa^{2}}+(C_{2}-C_{3})\left(\cot\kappa l-\tfrac{\sqrt{3}}{2}\right)=0\label{eq:X-IV-2}
\end{equation}
\begin{equation}
-\frac{\lambda_{2}+\lambda_{3}}{2\kappa^{2}}+(C_{2}+C_{3})\cot\kappa l+D_{1}=0\label{eq:X-IV-3}
\end{equation}
\begin{equation}
\left(\tfrac{\sqrt{3}}{2}\kappa L-1\right)L\frac{\lambda_{2}+\lambda_{3}}{2\kappa}+\left(\tfrac{\sqrt{3}}{2}\kappa L-1\right)\frac{C_{1}}{\kappa}-\tfrac{1}{2}(C_{4}+C_{5})+\tfrac{\sqrt{3}}{2}D_{1}=0\label{eq:X-IV-4}
\end{equation}
\begin{equation}
-\frac{\lambda_{2}-\lambda_{3}}{2\kappa^{2}}+(C_{4}-C_{5})\left(\cot\kappa l-\tfrac{\sqrt{3}}{2}\right)=0\label{eq:X-IV-5}
\end{equation}
\begin{equation}
\frac{\lambda_{2}+\lambda_{3}}{2\kappa^{2}}\left(\kappa^{2}L^{2}-1\right)+LC_{1}+(C_{4}+C_{5})\cot\kappa l+D_{1}=0\label{eq:X-IV-6}
\end{equation}
\begin{equation}
\left(\frac{\kappa^{3}L^{3}}{12}+\kappa l\right)\frac{\lambda_{2}}{\kappa^{2}}+\frac{\kappa^{3}L^{3}}{12}\frac{\lambda_{3}}{\kappa^{2}}+\frac{\kappa L^{2}}{2}C_{1}-(C_{2}+C_{4})+\kappa LD_{1}=0\label{eq:X-IV-7}
\end{equation}
\begin{equation}
\frac{\kappa^{3}L^{3}}{12}\frac{\lambda_{2}}{\kappa^{2}}+\left(\frac{\kappa^{3}L^{3}}{12}+\kappa l\right)\frac{\lambda_{3}}{\kappa^{2}}+\frac{\kappa L^{2}}{2}C_{1}-(C_{3}+C_{5})+\kappa LD_{1}=0\label{eq:X-IV-8}
\end{equation}
\begin{lem}
\label{lem:case-iv}The linear system of equations (\ref{eq:X-IV-1})-(\ref{eq:X-IV-8})
has a nontrivial solution if and only if the following condition is
satisfied:
\begin{equation}
\begin{gathered}\left[\phi\left(\tfrac{1}{6}L^{\star3}-\tfrac{\sqrt{3}}{2}L^{\star2}+L^{\star}+l\right)-\left(\sqrt{3}-L^{\star}\right)\right]\left(\phi L^{\star}-4\cot l^{\star}\right)+\\
\tfrac{1}{2}\left(4-\phi L^{\star}\right)\left(\sqrt{3}-L^{\star}\right)L^{\star}\left(\phi L^{\star}-2\cot l^{\star}\right)=0
\end{gathered}
\label{eq:107}
\end{equation}
In (\ref{eq:107}) $L^{\star}=\kappa L$, $l^{\star}=\kappa l$, and
$\phi=\sqrt{3}\cot l^{\star}+1$.
\end{lem}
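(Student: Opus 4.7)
My plan follows the pattern of Lemmas \ref{lem:case-i}, \ref{lem:case-ii}, \ref{lem:case-iii}: exploit the left--right antisymmetry between the two spines to collapse the $8\times 8$ homogeneous system (\ref{eq:X-IV-1})--(\ref{eq:X-IV-8}) onto a smaller block, then characterise nontrivial solvability as the vanishing of the determinant of that block. The wrinkle in Case IV is that $\mu=0$ forces $f_{1}$ on the flat leaf to be a genuine quadratic polynomial rather than an exponential, so the coupling between the two spines does \emph{not} further collapse to $C_{2}=C_{3}=C_{4}=C_{5}$ as in $(S_{1})$--$(S_{3})$, and only a single scalar compatibility relation---namely (\ref{eq:107})---survives.

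For the initial reduction, subtracting (\ref{eq:X-IV-8}) from (\ref{eq:X-IV-7}) gives $(C_{2}-C_{3})+(C_{4}-C_{5})=(l/\kappa)(\lambda_{2}-\lambda_{3})$, and subtracting (\ref{eq:X-IV-5}) from (\ref{eq:X-IV-2}) gives $(C_{2}-C_{3}-C_{4}+C_{5})(\cot\kappa l-\tfrac{\sqrt{3}}{2})=0$. Since $\kappa l<\pi/3$ implies $\cot\kappa l\neq\tfrac{\sqrt{3}}{2}$, these force $C_{2}-C_{3}=C_{4}-C_{5}=(l/2\kappa)(\lambda_{2}-\lambda_{3})$. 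Feeding the result back into (\ref{eq:X-IV-2}) produces $(\lambda_{2}-\lambda_{3})[\kappa l(\cot\kappa l-\tfrac{\sqrt{3}}{2})-1]=0$, and the Taylor expansion $x\cot x=1-x^{2}/3-\cdots$ shows the bracket is strictly negative throughout $(0,\pi/3)$. Hence $\lambda_{2}=\lambda_{3}$, $C_{2}=C_{3}$ and $C_{4}=C_{5}$, so equations (\ref{eq:X-IV-2}), (\ref{eq:X-IV-5}), (\ref{eq:X-IV-8}) become trivially satisfied and the remaining five---(\ref{eq:X-IV-1}), (\ref{eq:X-IV-3}), (\ref{eq:X-IV-4}), (\ref{eq:X-IV-6}), (\ref{eq:X-IV-7})---constitute a homogeneous $5\times 5$ system in $(\lambda,C_{1},D_{1},C,C')$. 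Nontrivial solvability of the original system is equivalent to the vanishing of this determinant.

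To reach the compact form (\ref{eq:107}), I would use (\ref{eq:X-IV-3}) to write $D_{1}=\Lambda-2\zeta C$ and (\ref{eq:X-IV-1}) to write $C_{1}/\kappa=C-\tfrac{\sqrt{3}}{2}D_{1}=\phi C-\tfrac{\sqrt{3}}{2}\Lambda$, where $\Lambda=\lambda/\kappa^{2}$, $\zeta=\cot l^{\star}$, and the combination $\phi=1+\sqrt{3}\zeta$ emerges naturally as the coefficient of $C$ after the substitution. Back-substituting into the three remaining equations (\ref{eq:X-IV-4}), (\ref{eq:X-IV-6}), (\ref{eq:X-IV-7}) leaves a homogeneous $3\times 3$ system in $(\Lambda,C,C')$ whose determinant, expanded along the $C'$ column, produces the two linear-in-$\phi$ factors $\phi L^{\star}-4\zeta$ and $\phi L^{\star}-2\zeta$ contributed by (\ref{eq:X-IV-6}) and (\ref{eq:X-IV-4}) respectively, together with the coefficient $\phi(\tfrac{1}{6}L^{\star 3}-\tfrac{\sqrt{3}}{2}L^{\star 2}+L^{\star}+l)-(\sqrt{3}-L^{\star})$ carried by the row issuing from (\ref{eq:X-IV-7}). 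The main obstacle is algebraic bookkeeping: because the $C=C'$ collapse fails, the resulting polynomial in $L^{\star}$ is genuinely cubic, and arranging the $3\times 3$ expansion so that the bilinear-in-$\phi$ structure of (\ref{eq:107})---rather than an opaque fifth-degree polynomial in $L^{\star}$ with $\zeta$-dependent coefficients---emerges requires repeated use of the identity $\phi=1+\sqrt{3}\zeta$ to consolidate the $\zeta$-linear and $\zeta^{0}$ contributions into the single factor $\phi$. Once the factorisation is verified, (\ref{eq:107}) is precisely the determinant-zero condition, and both implications of the lemma follow.
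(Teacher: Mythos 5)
Your argument is essentially the paper's: the same antisymmetric combinations of (\ref{eq:X-IV-2}), (\ref{eq:X-IV-5}), (\ref{eq:X-IV-7}), (\ref{eq:X-IV-8}) yield $\lambda_{2}=\lambda_{3}$, $C_{2}=C_{3}$, $C_{4}=C_{5}$, after which both proofs reduce the surviving homogeneous system by elimination and identify (\ref{eq:107}) with the vanishing of a determinant --- the paper eliminates further, down to a $2\times2$ system in $C_{1}^{\star},\lambda_{2}^{\star}$ via (\ref{eq:X-IV-11p})--(\ref{eq:X-IV-13}), while you stop at the equivalent $3\times3$ system in $\Lambda,C_{2},C_{4}$; both are legitimate because every eliminated variable has a nonzero pivot (in particular the $2\times2$ block for $C_{2},D_{1}$ has determinant $\phi>0$). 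One slip in your reduction step: $\kappa l<\pi/3$ does \emph{not} by itself give $\cot\kappa l\neq\tfrac{\sqrt{3}}{2}$, since equality occurs at $\kappa l=\mathrm{arccot}\bigl(\tfrac{\sqrt{3}}{2}\bigr)\approx0.857<\pi/3$; you need the sharper geometric bound $\kappa l=\omega<\pi/6$ that the paper invokes in Lemma \ref{lem:case-i}, which gives $\cot\kappa l>\sqrt{3}>\tfrac{\sqrt{3}}{2}$.
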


\begin{proof}
Equations (\ref{eq:X-IV-7}), (\ref{eq:X-IV-8}), (\ref{eq:X-IV-2})
and (\ref{eq:X-IV-5}) are equivalent to the following four equations:
\[
\lambda_{2}=\lambda_{3},\quad C_{2}=C_{3},\quad C_{4}=C_{5}
\]
and
\begin{equation}
\left(\tfrac{1}{6}L^{\star3}+l^{\star}\right)\lambda_{2}^{\star}+\tfrac{1}{2}L^{\star2}C_{1}^{\star}-(C_{2}+C_{4})+L^{\star}D_{1}=0\label{eq:X-IV-7p}
\end{equation}
We have switched to the dimensionless quantities $L^{\star}$, $l^{\star}$,
$\lambda_{2}^{\star}=\frac{\lambda_{2}}{\kappa^{2}}$ and $C_{2}^{\star}=\frac{C_{2}}{\kappa}$.
Similarly, equations (\ref{eq:X-IV-1}), (\ref{eq:X-IV-4}), (\ref{eq:X-IV-3})
and (\ref{eq:X-IV-6}) are equivalent to
\begin{equation}
C_{2}-\tfrac{\sqrt{3}}{2}D_{1}=C_{1}^{\star},\quad2C_{2}\cot l^{\star}+D_{1}=\lambda_{2}^{\star}\label{eq:X-IV-1p-3p}
\end{equation}
and
\begin{equation}
C_{2}-C_{4}=-L^{\star}\left(\tfrac{\sqrt{3}}{2}L^{\star}-1\right)\lambda_{2}^{\star}-\left(\tfrac{\sqrt{3}}{2}L^{\star}-2\right)C_{1}^{\star}\label{eq:X-IV-11p}
\end{equation}
\begin{equation}
\left[\left(\tfrac{\sqrt{3}}{2}L^{\star}-1\right)\cot l^{\star}+\tfrac{1}{2}L^{\star}\right]L^{\star}\lambda_{2}^{\star}=\left[\left(\tfrac{\sqrt{3}}{2}L^{\star}-2\right)\cot l^{\star}+\tfrac{1}{2}L^{\star}\right]C_{1}^{\star}\label{eq:X-IV-13}
\end{equation}
By (\ref{eq:X-IV-1p-3p}), solving for $C_{2}$ and $D_{1}$ in terms
of $C_{1}^{\star}$, $\lambda_{2}^{\star}$, and then solving (\ref{eq:X-IV-7p})
for $C_{4}$ again in terms of $C_{1}^{\star}$, $\lambda_{2}^{\star}$,
and substituting in (\ref{eq:X-IV-11p}) gives a linear homogeneous
equation in $C_{1}^{\star}$ and $\lambda_{2}^{\star}$. The compatibility
condition of the system comprised of this equation and (\ref{eq:X-IV-13})
gives equation (\ref{eq:107}). 
\end{proof}

\subsection{Existence of stable disconnected partitions}

In the following theorem we state the example announced at the beginning
of this section, showing the existence of stable disconnected three
phase partitionings by triple junction systems.
\begin{thm}
\label{thm:stab-discon-tj}Let $\Omega$ be a convex domain in $\mathbb{R}^{2}$,
and $T=(M_{1},\cdots,M_{5})$ a minimal disconnected three-phase partitioning
of $\Omega$ by a system of two $C^{2}$ triple junctions as in Figure
\ref{fig:DTJ}, with volume constraints. Furthermore, for $\Omega$
and the partitioning system $T$ we make the following assumptions:

\emph{(H1)} The boundary $\Sigma=\partial\Omega$ is $C^{2}$ in a
neighborhood of $\Sigma\cap\overline{T}$ and it is flat at $\overline{T}\cap\Sigma$.
In particular this means $\sigma=0$ at all points of $\overline{T}\cap\Sigma$.

\emph{(H2)} $M_{1}$ is flat, i.e. $\kappa_{1}=0$, and the length
of $M_{1}$ is $L$.

\emph{(H3) }All\emph{ }other leaves have the same curvature $\kappa\neq0$
and the same length $|M_{i}|=l$, $i=2,\cdots,5$.

\emph{(H4)} $\alpha<0$ in the orientation of Figure \ref{fig:DTJ}.

Then there is a $L_{0}>0$, possibly depending on $l$ and $\kappa$,
such that for $L\leqslant L_{0}$ the disconnected triple junction
partitioning $T$ is stable.
\end{thm}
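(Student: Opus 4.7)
My plan is to establish stability by verifying the hypothesis $\mu_{1}>0$ of Proposition \ref{prop:Categ-b}, where $\mu_{1}$ is the smallest eigenvalue of the boundary value problem (\ref{eq:eigv-PDE})-(\ref{eq:eigv-sp2}). Equivalently, I will show that for $L$ sufficiently small, no $\mu\leqslant 0$ occurs as an eigenvalue. Under hypotheses (H1)-(H4), the problem takes precisely the form studied in Section \ref{sec:Apps-2D}: $\sigma\equiv 0$ on $\partial M_{pj}\cap\Sigma$, $|B_{M_{1}}|^{2}=0$, $|B_{M_{i}}|^{2}=\kappa^{2}$ for $i=2,\ldots,5$, and on each spine $\alpha_{j}=-\tfrac{\sqrt{3}}{2}\kappa<0$, $\beta_{j}=0$. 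The admissible nonpositive eigenvalues are partitioned into Cases I--IV, and by Lemmas \ref{lem:case-i}-\ref{lem:case-iv} they correspond respectively to zeros of the $3\times 3$ determinants $\Delta_{i}(k,L)$ of $(S_{i})$, $i=1,2,3$, and to solutions of the transcendental equation (\ref{eq:107}) for Case IV.

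The argument then proceeds by a continuity/compactness analysis near $L=0$. For Case IV, direct substitution of $L^{\star}=0$ in (\ref{eq:107}) collapses it to the relation $(\phi l^{\star}-\sqrt{3})\cot l^{\star}=0$, i.e., $(\sqrt{3}\cot l^{\star}+1)\,l^{\star}=\sqrt{3}$; an elementary monotonicity check for the function $g(l^{\star}):=\sqrt{3}\,l^{\star}\cot l^{\star}+l^{\star}-\sqrt{3}$ shows this fails on the geometrically admissible range $0<l^{\star}\leqslant\pi/6$ (with equality only in the limit $l^{\star}\to 0^{+}$), so by continuity (\ref{eq:107}) holds strictly away from zero for all $L^{\star}$ in some right neighborhood of $0$. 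The same soft argument applies to Cases I and III because the relevant parameter ranges over a compact set (the interval $(0,\kappa)$ for $k$ in Case I, and the single value $\mu=-\kappa^{2}$ in Case III): at $L=0$ the determinants $\Delta_{1},\Delta_{3}$ can be checked not to vanish, and by compactness and continuity they remain bounded away from $0$ on $L\in[0,L_{0}]$ for a suitable $L_{0}$.

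Case II (exponential regime, $\mu<-\kappa^{2}$, $k\in(0,\infty)$) is the genuine obstacle, since $k$ ranges over a noncompact interval and the determinant $\Delta_{2}(k,L)$ involves the exponential $z=e^{L\sqrt{k^{2}+\kappa^{2}}}$. I would handle this by splitting the $k$-axis into three regions: a small-$k$ region that matches smoothly into Case III through the limit $k\to 0^{+}$; a compact intermediate region treated by continuity of $\Delta_{2}$ in $L$; and a large-$k$ region where, after factoring out the leading exponential $z$, the determinant converges (uniformly as $L\to 0^{+}$) to an explicit expression whose nonvanishing is enforced precisely by the sign hypothesis (H4) $\alpha<0$. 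Combining the three regimes should yield a uniform bound $|\Delta_{2}(k,L)|\geqslant c>0$ for all $k>0$ and all $L$ in some interval $(0,L_{0}]$, $L_{0}=L_{0}(l,\kappa)$.

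Having excluded eigenvalues $\mu\leqslant 0$ in all four cases for $L\leqslant L_{0}$, the smallest eigenvalue $\mu_{1}$ is strictly positive, and Proposition \ref{prop:Categ-b} yields the stability of $T$. The main difficulty is the uniform treatment of Case II on the noncompact $k$-axis; the role of (H4) is to fix the correct sign of the spine contribution so that the large-$k$ limit of $\Delta_{2}(k,L)$ is bounded away from zero, preventing any conspiracy between the exponentially growing interior terms and the boundary coupling at the spines.
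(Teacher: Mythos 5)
Your proposal follows essentially the same route as the paper: reduce to the eigenvalue problem of Proposition \ref{prop:Categ-b}, use Lemmas \ref{lem:case-i}--\ref{lem:case-iv} to turn Cases I--IV into determinant or transcendental conditions, check that these fail at $L=0$ (e.g.\ $(\sqrt{3}\cot l^{\star}+1)l^{\star}=\sqrt{3}$ has no root in $]0,\pi/6]$ for Case IV, $l^{\star}=\sqrt{3}$ is absurd for Case III), and propagate this to small $L>0$ by continuity/Taylor expansion with remainder, treating the noncompact Case II by large-$k$ asymptotics plus a compact-interval argument. One small correction: in the paper the nonvanishing of $D_{2}$ for large $k$ comes from the positive leading coefficient $2\sqrt{3}(L^{\star}+l^{\star})$ in the expansion $D_{2}(x)=ze^{2xl^{\star}}x\left[2\sqrt{3}(L^{\star}+l^{\star})+\mathcal{O}(1/x)\right]$, not from hypothesis (H4); the sign $\alpha<0$ enters earlier, in the spine conditions (\ref{eq:eigv-sp1})--(\ref{eq:eigv-sp2}) that produce the systems $(S_{1})$--$(S_{3})$ and equation (\ref{eq:107}) in the first place.
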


\begin{proof}
We will prove that the cases (I)-(IV) give no eigenvalue $\mu$, and
thus the minimal eigenvalue of the problem is necessarily positive,
which then by Proposition \ref{prop:Categ-b} proves the assertion.

Assume $\left(\tan l^{\star}+\sqrt{3}\right)L_{0}^{\star}<4$. The
possible eigenvalues in the range $-\kappa^{2}<\mu<0$ (case I) are
given by the solution of the equation $D_{1}(x)=0$ in $0<x<1$, where
$D_{1}$ is the determinant of $(S_{1})$ (see Lemma \ref{lem:case-i})
multiplied by $\kappa k(k+\kappa)$, 
\begin{equation}
D_{1}(x)=\frac{1}{x^{2}(1-x)}\left|\begin{array}{ccc}
x^{2} & a(z-1)-(z+1) & \tfrac{2}{\sqrt{3}}x^{2}\\
1 & -(z+1) & -2x\cot(l^{\star}x)\\
l^{\star}(1-x^{2})-L^{\star}x^{2} & 2\frac{z-1}{\sqrt{1-x^{2}}} & -2
\end{array}\right|,
\end{equation}
$k^{2}=\mu+\kappa^{2}$, $x=\frac{k}{\kappa}$, $0<x<1$, $a=\frac{2}{\sqrt{3}}\sqrt{1-x^{2}}$,
and $z=e^{L^{\star}\sqrt{1-x^{2}}}$. Clearly $D_{1}$ is real analytic
in $L^{\star}$, and
\begin{equation}
D_{1}(x)=4\frac{x+1}{x^{2}}\left[\frac{l^{\star}}{\sqrt{3}}x^{2}+l^{\star}x\cot\left(l^{\star}x\right)-1\right]+\mathcal{O}(L^{\star})\label{eq:D1}
\end{equation}
We will prove that the term inside square brackets is $\geq C_{0}$,
where $C_{0}>0$ is a constant, in the interval $0\leqslant x\leqslant1$.
Considering the function
\[
f(x)=\frac{l^{\star}}{\sqrt{3}}x^{2}\sin(l^{\star}x)+l^{\star}x\cos\left(l^{\star}x\right)-\sin(l^{\star}x)
\]
with derivative
\[
f^{\prime}(x)=l^{\star}x\sin\left(l^{\star}x\right)\left[\tfrac{2}{\sqrt{3}}-l^{\star}+\tfrac{1}{\sqrt{3}}l^{\star}x\cot\left(l^{\star}x\right)\right]>0,\;x>0
\]
we obtain $f(x)>0$ for $x>0$ and
\[
\frac{l^{\star}}{\sqrt{3}}x^{2}+l^{\star}x\cot\left(l^{\star}x\right)-1>0
\]
The limits as $x\to0+,1-$ of the 0-th order term in the expansion
(\ref{eq:D1}) are positive in the valid range of $l^{\star}$, $]0,\frac{\pi}{6}[$.
Furthermore, the function $\frac{\partial D_{1}}{\partial L^{\star}}$
is bounded and continuous in $[0,1]$. Application of Taylor's formula
with remainder yields a $L_{1}^{\star}>0$ such that $L_{1}^{\star}\leqslant L_{0}^{\star}$
and $D_{1}(x)\not=0$ in $]0,1[$ for all $0<L^{\star}\leqslant L_{1}^{\star}$.

The possible eigenvalues in the range $\mu<-\kappa^{2}$ are given
by the solution of the equation $D_{2}(x)=0$ in $]0,\infty[$, where
$D_{2}$ is the determinant of $(S_{2})$ (see Lemma \ref{lem:case-ii})
multiplied by $\kappa k(k+\kappa)$,
\[
D_{2}(x)=\frac{1}{x^{2}}\left|\begin{array}{ccc}
x^{2} & a(z-1)-(z+1) & -\frac{2}{\sqrt{3}}x\left(e^{2xl^{\star}}-1\right)\\
1 & z+1 & 2\left(e^{2xl^{\star}}+1\right)\\
L^{\star}x^{2}+l^{\star}(x^{2}+1) & -2\frac{z-1}{\sqrt{x^{2}+1}} & \frac{2}{x}\left(e^{2xl^{\star}}-1\right)
\end{array}\right|,
\]
$k^{2}=-(\mu+\kappa^{2})$, $x=\frac{k}{\kappa}$, $x>0$, $a=\frac{2}{\sqrt{3}}\sqrt{1+x^{2}}$,
and $z=e^{L^{\star}\sqrt{1+x^{2}}}$. We have
\[
D_{2}(x)=ze^{2xl^{\star}}x\left[2\sqrt{3}(L^{\star}+l^{\star})+\mathcal{O}(\frac{1}{x})\right]
\]
as $x\to+\infty$. Consequently, we can select a $x_{0}>0$ (which
is independent of $L^{\star}$) such that $D_{2}(x)\neq0$ for $x\geqslant x_{0}$.
To prove that $D_{2}$ has no roots in $]0,x_{0}[$ we consider its
Taylor expansion in $L^{\star}$,
\[
D_{2}(x)=4\frac{x^{2}+1}{x^{3}}\left[\left(\tfrac{l^{\star}}{\sqrt{3}}x^{2}-l^{\star}x+1\right)e^{2l^{\star}x}-\tfrac{l^{\star}}{\sqrt{3}}x^{2}-l^{\star}x-1\right]+\mathcal{O}(L^{\star})
\]
We will prove that the 0-th order term is positive in $[0,x_{0}]$.
To this purpose we write the term inside the square brackets in the
form
\[
\left(\tfrac{1}{\sqrt{3}}l^{\star}x^{2}-l^{\star}x+1\right)\left(e^{2l^{\star}x}-1\right)-2l^{\star}x
\]
and apply the inequality $e^{t}-1\geqslant t+\frac{1}{2}t^{2}$ with
$t=2l^{\star}x$. In this way we obtain
\[
\left(\tfrac{l^{\star}}{\sqrt{3}}x^{2}-l^{\star}x+1\right)e^{2l^{\star}x}-\tfrac{l^{\star}}{\sqrt{3}}x^{2}-l^{\star}x-1>\tfrac{2}{\sqrt{3}}l^{\star2}x^{3}\left(l^{\star}x-\sqrt{3}l^{\star}+1\right).
\]
Since $l^{\star}\leqslant\frac{\pi}{6}$, we have $l^{\star}x-\sqrt{3}l^{\star}+1\geqslant1-\frac{\sqrt{3}\pi}{6}>0$.
This proves the existence of a $C_{0}>0$ such that
\[
4\frac{x^{2}+1}{x^{3}}\left[\left(\tfrac{l^{\star}}{\sqrt{3}}x^{2}-l^{\star}x+1\right)e^{2l^{\star}x}-\tfrac{l^{\star}}{\sqrt{3}}x^{2}-l^{\star}x-1\right]\geqslant C_{0},\;x>0.
\]
The partial derivative of $D_{2}$ with respect to $L^{\star}$ is
a bounded continuous function in $[0,x_{0}]$. By Taylor's formula
with remainder, we can select $L_{2}^{\star}\leqslant L_{1}^{\star}$
so small that $|\mathcal{O}(L^{\star})|<\frac{C_{0}}{2}$ for $L^{\star}\leqslant L_{2}^{\star}$.
Then $D_{2}(x)>\frac{C_{0}}{2}$ in $]0,x_{0}[$, and this completes
the proof that $D_{2}$ has no roots in $]0,+\infty[$ for $L^{\star}\leqslant L_{2}^{\star}$.

We proceed to case (III) for the eigenvalue $\mu=-\kappa^{2}$. Solving
the last two equations of $(S_{3})$ for $C_{1}$, $C_{2}$ in terms
of $\lambda_{2}^{\star}$ and substituting in the first of $(S_{3})$
gives the following necessary and sufficient condition for $(S_{3})$
to have nontrivial solutions:
\begin{equation}
\left(\sqrt{3}+2l^{\star}+\tfrac{1}{3}l^{\star3}+L^{\star}\right)(z-1)+\tfrac{1}{2}\left(l^{\star2}-\tfrac{1}{\sqrt{3}}l^{\star3}-\sqrt{3}L^{\star}\right)(z+1)=0\label{eq:109}
\end{equation}
In the limit $L\to0$ this reduces to $l^{\star}=\sqrt{3}>\frac{\pi}{6}$,
which is absurd. Hence, there is a $L_{3}^{\star}>0$ such that $L_{3}^{\star}\leqslant L_{2}^{\star}$
and equation (\ref{eq:109}) has no solution for $L^{\star}\leqslant L_{3}^{\star}$.

Finally, we treat the neutral stability case (IV), $\mu=0$. By Lemma
\ref{lem:case-iv} the eigenvalue 0 is possible only for pairs $L^{\star}$,
$l^{\star}$ satisfying equation (\ref{eq:107}). As $L^{\star}\to0$
this reduces to 
\[
-4\cot l^{\star}\left[l^{\star}\left(\sqrt{3}\cot l^{\star}+1\right)-\sqrt{3}\right]=0
\]
which, as it is easily seen, has no solution. This implies the existence
of a $L_{4}^{\star}>0$ such that $L_{4}^{\star}\leqslant L_{3}^{\star}$
and there is no $l^{\star}$ satisfying equation (\ref{eq:107}) for
all $L^{\star}\leqslant L_{4}^{\star}$. Redefinition of $L_{0}^{\star}$
as $L_{4}^{\star}$ proves the theorem.
\end{proof}

\end{document}